\documentclass[10pt]{amsart}
\usepackage{amsmath, amscd, amsthm} 
\usepackage{amssymb, amsfonts} 
\usepackage{enumerate}
\usepackage{bbm}
\usepackage{color}
\usepackage[all]{xy} 
\usepackage[left=1in, right=1.5in]{geometry}
\usepackage{palatino}
\keywords{Motivic Homotopy Theory, Koras-Russell Prototypes, Exotic Motivic Spheres}
\usepackage{mathrsfs}
\usepackage{graphicx}
\usepackage{booktabs} 
\usepackage{aliascnt}
\usepackage{mathrsfs}
\usepackage[dvipsnames]{xcolor} 
\usepackage{hyperref}
\definecolor{dark-red}{rgb}{0.4,0.15,0.15}
\setcounter{secnumdepth}{2}
\setcounter{tocdepth}{1}
\hypersetup{colorlinks, linkcolor= PineGreen, citecolor= olive, urlcolor= BrickRed}
\usepackage{mathrsfs}
\usepackage{tikz-cd}
\usepackage{stackrel}
\usepackage[capitalise]{cleveref}
\usepackage[toc,page]{appendix}
\usepackage{changepage}
\usepackage{quiver}

\usepackage[backend=biber, backref=true, style=alphabetic, sorting=nty, maxbibnames=99, maxalphanames=999]{biblatex}
\addbibresource{main.bib}

\DefineBibliographyStrings{english}{%
  backrefpage = {\textbf{P.}}, 
  backrefpages = {\textbf{P.}}, 
}


\renewcommand{\AA}{\mathbb{A}}

\newcommand{\CC}{\mathbb{C}}
\newcommand{\GG}{\mathbb{G}}

\newcommand{\HH}{\mathcal{H}}
\newcommand{\KK}{\mathcal{K}}
\newcommand{\SH}{\mathrm{SH}}

\newcommand{\ZZ}{\mathbb{Z}}

\newcommand{\Sm}{\mathrm{Sm}}

\newcommand{\pr}{\mathrm{pr}}

\newcommand{\Spec}{\operatorname{Spec}}

\newcommand{\ML}{\operatorname{ML}}

\newcommand{\bs}{\backslash}
\newcommand{\Abs}{\mathbb{A}^2\backslash\{0\}}
\newcommand{\Absn}{\AA^n\backslash\{0\}}

\renewcommand{\frak}{\mathfrak}

\newcommand{\RR}{\mathbb{R}}
\renewcommand{\L}{\mathbb{L}}

\newcommand{\FF}{\mathbb{F}}

\renewcommand{\SS}{\mathbb{S}}

\newcommand{\Th}{\mathrm{Th}}
\newcommand{\id}{\mathrm{id}}

\newcommand{\mcal}[1]{\mathcal{#1}}
\newcommand{\ul}[1]{\underline{\smash{#1}}}

\newcommand{\barg}{\bar{g}}

\newcommand{\Lin}{\langle}
\newcommand{\Rin}{\rangle}


\newcommand{\XX}{\mathcal{X}}
\newcommand{\QQ}{\mathbb{Q}} 
\newcommand{\PP}{\mathbb{P}}

\newtheorem*{theorem*}{Theorem}
\newtheorem*{prop*}{Proposition}
\newtheorem*{qstn*}{Question}
\newtheorem*{lemma*}{Lemma}
\newtheorem*{corollary*}{Corollary}
\newtheorem*{defn*}{Definition}

\DeclareMathOperator{\Hom}{Hom}

\DeclareMathOperator{\Nis}{Nis}

\usepackage[T1]{fontenc}


\numberwithin{equation}{section} 
\theoremstyle{plain}
\theoremstyle{definition}
 
\newaliascnt{theorem}{equation}  
\newtheorem{theorem}[theorem]{Theorem}  
\aliascntresetthe{theorem}

\newaliascnt{prop}{equation}  
\newtheorem{prop}[prop]{Proposition}
\aliascntresetthe{prop}

\newaliascnt{lemma}{equation}  
\newtheorem{lemma}[lemma]{Lemma}
\aliascntresetthe{lemma}

\newaliascnt{corollary}{equation}  
\newtheorem{corollary}[corollary]{Corollary}
\aliascntresetthe{corollary}

\newaliascnt{claim}{equation}  

\aliascntresetthe{claim}

\newaliascnt{conjecture}{equation}  
\newtheorem{conjecture}[conjecture]{Conjecture}
\aliascntresetthe{conjecture}

\newaliascnt{question}{equation}  
\newtheorem{question}[question]{Question}
\aliascntresetthe{question}

\newaliascnt{defn}{equation}  
\newtheorem{defn}[defn]{Definition}
\aliascntresetthe{defn}

\newaliascnt{example}{equation}  
\newtheorem{example}[example]{Example}
\aliascntresetthe{example}

\theoremstyle{remark}

\newaliascnt{remark}{equation}  
\newtheorem{remark}[remark]{Remark}
\aliascntresetthe{remark}

\newaliascnt{convention}{equation}  

\aliascntresetthe{convention}


\newcommand{\Spc}{\operatorname{Spc}}
\newcommand{\Shv}{\operatorname{Shv}}
\newcommand{\Psh}{\operatorname{Psh}}
\newcommand{\sPsh}{\operatorname{sPsh}}

\newcommand{\MW}{\operatorname{MW}}
\newcommand{\M}{\operatorname{M}}
\newcommand{\GW}{\operatorname{GW}}

\newcommand{\Aut}{\operatorname{Aut}}

\newcommand{\Id}{\operatorname{Id}}

\emergencystretch=1em

\begin{document}
\title{Relative $\AA^1$-Contractibility of Koras-Russell prototypes and Exotic Motivic Spheres}
\author{Krishna Kumar Madhavan Vijayalakshmi}
\address{Department of Mathematics Federiques Enriques, University of Milan, Italy \& IMB UMR5584, Universit{\'e} de Bourgogne-Europe, Dijon, France }
\email{krishna.madhavan@unimi.it \& krishna.madhavan@ube.fr}
\urladdr{https://krishmv.github.io/}

\begin{abstract}
The Koras-Russell threefolds are a certain family of smooth, affine contractible threefolds exhibiting "exotic" behavior in the algebro-geometric context. Our goal in this note is to extend its $\AA^1$-contractibility from a field to a general base scheme. As a consequence, we also give a general strategy to extend the $\AA^1$-contractibility of Koras-Russell prototypes in higher dimensions over a general base scheme. As a major consequence, we establish the existence of "exotic" motivic spheres in all dimensions at least 4 over infinite perfect fields.
\end{abstract}
\maketitle
\section{Introduction}
One of the emerging problems in algebraic geometry is to uniquely characterize the affine $n$-space $\AA^n_k$ among smooth affine $\AA^1$-contractible $ n$-schemes over a field $k$. The uniqueness of $\AA^n_k$ holds up to dimensions $n \le 2$ over $k$ (cf. \cite{asok2007unipotent, choudhury2024}) and fails from dimensions $n\ge 3$. When char $k>0$, the failure is witnessed by the \emph{non-cancellation} property of $\AA^n_k$. Indeed, recall that an $m$-dimensional smooth affine $k$-variety $X$ is said to be \emph{cancellative} if $X \times_k \AA^1_k \cong_k \AA_k^{m+1}\ \textrm{implies that}\ X\cong_k \AA^m_k$. This is the algebro-geometric version of the famously known \emph{Zariski Cancellation Problem} (ZCP), which began its formulation in the setting of commutative algebra (cf. \cite[\S 2]{gupta2022Zariski}). The ZCP is proven to be true for all fields in dimension 1 (cf. \cite{abhyankar1972uniqueness, eakin2006cancellation}) and 2 (cf. \cite{miyanishi1980affine, russell1981affine}). It is proven in negative for all dimensions $\ge 3$ over fields of positive characteristic due to the presence of so-called \emph{Asanuma-Gupta varieties} (see \cref{rem:AG-isnot-KR3F}) which was constructed in \cite{gupta2014ZCPpaper-2, gupta2014ZCPpaper-1} building on a key results of \cite{asanuma1987polynomial}, thereby completely solving ZCP over fields of positive characteristics (see \cite{gupta2022Zariski} for comprehensive survey). Over a field of characteristic zero, it is now known that the uniqueness of $\AA^3_k$ does not hold owing to the presence of \emph{Koras-Russell threefolds}, a family of smooth affine threefolds defined by the polynomial equation
\begin{equation}\label{intro: KR3F-threefolds-eqn}
    \KK:= \{x^mz = x+y^r+t^s\}\subset \AA^4_k
\end{equation} 
where $m,r,s\geq 2$ are integers with $r$ and $s$ coprime. The counter-intuitive properties of $\KK$ are that it is topologically contractible as an analytic manifold \cite{KR97}, $\AA^1$-contractible in the stable $\AA^1$-homotopy category $\SH(\CC)$ \cite{HKO16}, and even unstably $\AA^1$-contractible \cite{DF18} in the Morel-Voevodsky's $\AA^1$-homotopy theory $\mcal{H}(k)$ \cite{MV99}; nevertheless, it is not isomorphic to $\AA^3_{\CC}$ as witnessed by the eponymous Makar-Limanov \cite{makar1996hypersurface} and Dersken invariants \cite[Chapter II, \S 3]{derksen1997constructive}. In a vast context, such smooth affine $n$-dimensional varieties that are $\AA^1$-contractible but not isomorphic to $\AA^n$ will be called \emph{exotic} (cf. \Cref{var:exotic}). It is crucial to note that the ZCP remains open in characteristic zero. In this direction, \cite{dubouloz2009cylinder} proved that the cylinders over $\KK$ have the same Makar-Limanov invariant as that of $\AA^4_{\CC}$, but it is not known if $\KK\times\AA^1\cong_k \AA^4_k$. 
\medskip

We shall now briefly elaborate on the goals of this report. We begin by extending the $\AA^1$-contractibility of $\KK$ from a field of characteristic zero to arbitrary fields. 
\begin{prop*}(\cref{A1-cont-over-perfect-fields})
For any perfect field $k$, the canonical morphism $f:\KK\to \Spec k$ is an $\AA^1$-weak equivalence in $\Spc_k$.
\end{prop*}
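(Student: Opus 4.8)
The plan is to reduce the statement over an arbitrary perfect field $k$ to the known case of characteristic zero (or, more precisely, to exploit a presentation of $\KK$ that makes the $\AA^1$-contractibility argument characteristic-independent). The strategy of \cite{DF18} in characteristic zero proceeds by realizing $\KK$ as the total space of a torsor, or more precisely by exhibiting a tower of affine-space bundles (equivalently, using the deformation to the normal cone / a sequence of $\AA^1$-fibrations) so that $\KK$ is built from $\Spec k$ by iterated $\AA^1$-homotopy equivalences. The key observation is that none of these fibration arguments genuinely require $\ch k = 0$: the defining equation $x^m z = x + y^r + t^s$ makes sense over any base, and the relevant morphisms (projections, the locally nilpotent derivation structure away from $\{x=0\}$, the identification of the $x=0$ fiber) are defined integrally. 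So first I would isolate precisely which steps of the Dubouloz--Fasel proof invoke characteristic-zero hypotheses.

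\medskip

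Concretely, I would proceed as follows. First, consider the projection $\pi\colon \KK \to \AA^1_k$ given by $x$. Over the open locus $\{x \neq 0\} \cong \GG_{m,k}$, the equation can be solved for $z$, so $\pi^{-1}(\GG_m) \cong \GG_{m,k} \times \AA^2_k$ (coordinates $x, y, t$), which is $\AA^1$-equivalent to $\GG_{m,k}$. Over the closed point $\{x = 0\}$, the fiber is $\{y^r + t^s = 0\} \subset \AA^2_k$, a cuspidal-type curve; since $\gcd(r,s) = 1$ this curve is isomorphic to $\AA^1_k$ via the normalization $u \mapsto (u^s, -u^r)$ — crucially this parametrization is defined over $\ZZ$ and works in every characteristic. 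Next I would use a Mayer--Vietoris / gluing argument in $\Spc_k$, or better, the explicit homotopy contracting $\KK$ used in \cite{DF18}, which is built from an affine modification / blow-up description: $\KK$ is obtained from $\AA^3_k$ (or from a simpler variety) by an affine modification along a principal divisor, and the Dubouloz--Fasel theorem that such modifications preserve $\AA^1$-contractibility under suitable hypotheses. I would check that the hypotheses of that theorem (smoothness, the center being $\AA^1$-contractible and of the right codimension, the divisor being $\AA^1$-contractible) all hold over an arbitrary perfect field, since they are statements about the schemes $\{x = 0\} \cong \AA^1_k$ and $\{x = 0, y^r + t^s = 0\} \cong \AA^1_k$ appearing as center and divisor.

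\medskip

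The main obstacle I anticipate is the following: the unstable $\AA^1$-contractibility result of \cite{DF18} relies on the affine modification technology of Dubouloz, whose key homotopy-invariance input — that an affine modification of an $\AA^1$-contractible scheme along an $\AA^1$-contractible divisor with $\AA^1$-contractible center is again $\AA^1$-contractible — is, in its published form, sometimes stated over a field of characteristic zero, and one must verify that the proof (which ultimately rests on the gluing lemma for $\AA^1$-homotopy pushouts and on the identification of certain Thom spaces) does not secretly use resolution of singularities or generic smoothness in an essential way. I expect it does not: the relevant Thom space computations use only that the normal bundle of a smooth closed subscheme is a vector bundle, which is characteristic-free. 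So the bulk of the work is bookkeeping: re-examining each lemma in the chain \cite{DF18} $\to$ Dubouloz's affine modification results and confirming that ``field of characteristic zero'' can be replaced by ``perfect field'' throughout, the perfectness being needed only so that $\Spc_k$ has the expected good properties (e.g., for the equivalence of various models, or for Morel's connectivity theorem if it is invoked). Finally I would assemble these observations into the statement that $f\colon \KK \to \Spec k$ is an $\AA^1$-weak equivalence in $\Spc_k$.
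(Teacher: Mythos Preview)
Your high-level instinct --- re-examine the \cite{DF18} argument step by step and check that nothing genuinely requires characteristic zero --- is exactly what the paper does. But your reconstruction of what that argument \emph{is} is off in a way that matters, and you have not located the one step where characteristic actually bites.

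First, two concrete errors. The fiber of $\pi\colon\KK\to\AA^1_x$ over $x=0$ is not the cuspidal curve $\{y^r+t^s=0\}$: the coordinate $z$ is free there, so the fiber is $C_{r,s}\times\AA^1_z$. More seriously, \cite{DF18} does not prove or invoke a general ``affine modifications preserve $\AA^1$-contractibility'' theorem of the shape you describe. Their proof is structured instead around the cofiber sequence comparing $\AA^2\bs\{0\}\to\AA^2$ with $\KK\bs L\to\KK$ (where $L=\{x=y=t=0\}$), and the two ingredients are: (a) the Danielewski fiber-product trick, which produces a Zariski locally trivial $\AA^1$-bundle $\KK\bs L\to\AA^2\bs\{0\}$ by first building an algebraic space $\mathfrak{S}$ over which all the $\KK(s)\bs L$ are \'etale $\AA^1$-bundles; and (b) a motivic Brouwer degree computation in Milnor--Witt $K$-theory showing that the inclusion $\AA^2\bs\{0\}\hookrightarrow\KK\bs L$ is an $\AA^1$-equivalence. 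Neither of these appears in your sketch.

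The single characteristic-sensitive step is in (a): to build $\mathfrak{S}$, one needs a finite \'etale cover $C_1=\Spec k[t^{\pm1},y]/(y^r+t^s)\to\Spec k[t^{\pm1}]$ via $(y,t)\mapsto t$, which fails to be \'etale precisely when $\ch k\mid r$. The fix is that $\gcd(r,s)=1$ forces $\ch k\nmid s$, so one uses the projection $(y,t)\mapsto y$ of degree $s$ instead. Once this is repaired, the rest of the construction of $\mathfrak{S}$ and the Milnor--Witt computation in (b) go through over any perfect field without change. Your proposal never isolates this step, and the ``bookkeeping'' you anticipate would not uncover it if you are looking at the wrong chain of lemmas.
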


As a consequence, we obtain the analogous results over the integers $\Spec \ZZ$ and Noetherian base schemes $S$.

\begin{theorem*}\label{intro:KR3FoverZ} (\cref{KR3FoverZ})
The smooth affine scheme $\KK\to \Spec\ZZ$ is $\AA^1$-contractible in $\HH(\ZZ)$. In particular, $\KK$ is an $\AA^1$-contractible scheme that is not isomorphic to $\AA^3_{\ZZ}$.
\end{theorem*}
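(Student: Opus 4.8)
The plan is to bootstrap from the field case (the Proposition over perfect fields, \cref{A1-cont-over-perfect-fields}) to the base $\Spec\ZZ$ by a spreading-out and base-change argument, exploiting the fact that $\KK$ is already defined by an equation with integer coefficients, so that $\KK_{\ZZ}:=\{x^mz=x+y^r+t^s\}\subset\AA^4_{\ZZ}$ is a smooth affine $\ZZ$-scheme whose fiber over every point of $\Spec\ZZ$ is a Koras-Russell prototype over the corresponding residue field. First I would recall that $\Spc_{\ZZ}$ (equivalently $\HH(\ZZ)$) can be probed by pullback along the morphisms $\Spec k\to\Spec\ZZ$ for $k$ ranging over the prime fields $\QQ$ and $\FF_p$: by Morel--Voevodsky and its refinements, a morphism of smooth $\ZZ$-schemes is an $\AA^1$-weak equivalence in $\Spc_{\ZZ}$ if (and in good cases only if) its base change to each such residue field is an $\AA^1$-weak equivalence; more robustly, one can use the conservativity of the family of base-change functors $\Spc_{\ZZ}\to\Spc_{k}$ on the relevant subcategory, together with a continuity/coherence statement that identifies the motivic space of $\KK_{\ZZ}$ over $\ZZ$ with a suitable limit of its base changes.

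The key steps, in order, would be: (1) observe $\KK\to\Spec\ZZ$ is smooth (the partial derivative $\partial/\partial z=x^m$ together with $\partial/\partial x=mx^{m-1}z-1$ generate the unit ideal on $\KK$, since on the locus $x=0$ one has $\partial/\partial x=-1$) and affine of relative dimension $3$; (2) for each prime $p$, the fiber $\KK_{\FF_p}$ is the Koras-Russell prototype over $\FF_p$, and $\FF_p$ is perfect, so by \cref{A1-cont-over-perfect-fields} the structure map $\KK_{\FF_p}\to\Spec\FF_p$ is an $\AA^1$-weak equivalence; likewise $\KK_{\QQ}\to\Spec\QQ$ is an $\AA^1$-weak equivalence (this is the characteristic-zero case, recovering \cite{DF18} but also covered by the Proposition after noting $\QQ$ is perfect); (3) assemble these fiberwise equivalences into an equivalence over $\Spec\ZZ$. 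For step (3) I would invoke a gluing principle along the arithmetic fracture square: the cartesian square relating $\Spec\ZZ$, $\Spec\QQ$, $\prod_p\Spec\ZZ_{(p)}$ (or the henselizations / completions) and their overlaps lets one descend an $\AA^1$-contractibility statement from the fibers and the generic point to the total space, provided one also controls the behavior over the henselian local rings $\ZZ_{(p)}^h$ — and there one again uses that $\KK$ over a henselian DVR is weakly equivalent to its special fiber via the closed-point inclusion being an $\AA^1$-equivalence (a consequence of smoothness and $\AA^1$-invariance together with proper/smooth base change for the henselian local ring). Once $\KK\to\Spec\ZZ$ is shown to be an $\AA^1$-weak equivalence, the final sentence is immediate: an $\AA^1$-weak equivalence to $\Spec\ZZ$ is by definition $\AA^1$-contractible, and $\KK$ is not isomorphic to $\AA^3_{\ZZ}$ because base-changing such an isomorphism to $\CC$ would contradict the Makar-Limanov invariant computation of \cite{makar1996hypersurface} (equivalently, $\KK_{\CC}\not\cong\AA^3_{\CC}$ already obstructs it).

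The main obstacle I anticipate is step (3): passing from fiberwise $\AA^1$-weak equivalences to a global one over $\Spec\ZZ$ is not formal, since $\AA^1$-weak equivalences are not detected by fibers in general. The honest route is to prove directly that the contracting $\AA^1$-homotopies constructed in the proof of \cref{A1-cont-over-perfect-fields} are defined \emph{integrally} — i.e. that the explicit chain of elementary $\AA^1$-homotopies and Zariski-local contractions exhibiting $\KK_k\simeq\Spec k$ uses only polynomial maps with $\ZZ$-coefficients and open covers defined over $\ZZ$ — so that the same chain, read over $\Spec\ZZ$, exhibits $\KK_{\ZZ}\simeq\Spec\ZZ$ without any descent argument. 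This is really a matter of auditing the field-level proof for coefficient-integrality; I expect it goes through because the Koras-Russell equation and all the standard maneuvers (the $x\mapsto$ unit rescalings, the affine modifications, the deformation to the normal cone along $\{x=0\}$) are visibly defined over $\ZZ$, but verifying that no step secretly inverts a residue characteristic or extracts a root is the delicate point. If a clean integral audit is available, the Noetherian-base statement $S$ then follows by pulling back along the unique morphism $S\to\Spec\ZZ$, since $\AA^1$-weak equivalences of smooth schemes are stable under arbitrary base change.
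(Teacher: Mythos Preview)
Your steps (1) and (2) are exactly what the paper does: check smoothness of $\KK\to\Spec\ZZ$, then invoke \cref{A1-cont-over-perfect-fields} (together with \cite{DF18} at the generic point) to see that every scheme-theoretic fiber $\KK_{\kappa(s)}\to\Spec\kappa(s)$ is $\AA^1$-contractible. The divergence is entirely in step (3).

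You correctly flag the passage from fiberwise to global as the crux, but you are missing the lemma that makes it immediate. The paper's \cref{pointwise:phenomenon} (equivalently \cref{fiberwise=relative}), which is a consequence of the Morel--Voevodsky gluing theorem \cite[Theorem 2.21]{MV99}, says precisely that for a \emph{smooth} morphism $f:X\to S$ with $S$ Noetherian of finite Krull dimension, $f$ is an $\AA^1$-weak equivalence in $\Spc_S$ if and only if every fiber $X_s\to\Spec\kappa(s)$ is $\AA^1$-contractible. So once you have fiberwise contractibility, you are done: no fracture square, no integrality audit. Your worry that ``$\AA^1$-weak equivalences are not detected by fibers in general'' is well-founded for arbitrary maps of motivic spaces, but the smoothness hypothesis is exactly what buys you the converse direction here.

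Your two proposed workarounds are both more fragile than you suggest. The arithmetic fracture/henselization route would still need, at each henselian local ring, a mechanism to pass from the special fiber to the total space, and that mechanism is again essentially the gluing theorem. The integrality audit is worse: the field-level proof in \cref{A1-weakeq-of-i} genuinely extracts $r$-th roots, builds Galois closures of \'etale covers of $\GG_m$, and computes in Milnor--Witt $K$-theory of the residue fields; none of this is a chain of polynomial maps over $\ZZ$, and the Galois-theoretic input is not available integrally. So the ``audit'' would not go through as stated.
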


\begin{corollary*} (\cref{KR3Fmain:Noetherian})
Let $S$ be any Noetherian scheme with perfect residue fields. Then the canonical morphism $\KK\to S$ is an $\AA^1$-weak equivalence in $\Spc_S$ of relative dimension 3.
\end{corollary*}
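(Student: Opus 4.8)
The plan is to deduce the statement from the case $S=\Spec\ZZ$, which is \cref{KR3FoverZ}. The polynomial $x^{m}z-x-y^{r}-t^{s}$ has coefficients in $\ZZ$, so $\KK$ is tautologically the base change of the smooth affine $\ZZ$-scheme $\KK_{\ZZ}=\{x^{m}z=x+y^{r}+t^{s}\}\subset\AA^{4}_{\ZZ}$ along the structure morphism $\pi\colon S\to\Spec\ZZ$; in particular $\KK\to S$ is smooth affine of relative dimension $3$ (it is a smooth hypersurface, as one checks from the Jacobian criterion over $\ZZ$ using $m,r,s\ge 2$), so there is nothing to prove about the relative dimension. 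Writing $\pi^{*}\colon\Spc_{\ZZ}\to\Spc_{S}$ for the inverse image functor, we then have $\KK=\pi^{*}(\KK_{\ZZ})$ and $S=\pi^{*}(\Spec\ZZ)$, the second identity because $\pi^{*}$ sends the presheaf represented by a smooth $\ZZ$-scheme $X$ to the one represented by $X\times_{\Spec\ZZ}S$ and $\Spec\ZZ\times_{\Spec\ZZ}S=S$.

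The key point is that $\pi^{*}$ preserves $\AA^{1}$-weak equivalences. On presheaves $\Psh(\Sm_{\ZZ})\to\Psh(\Sm_{S})$ it is a left adjoint, it carries the \v{C}ech nerve of a Nisnevich cover to the \v{C}ech nerve of its pullback (Nisnevich covers being stable under base change) and the projection $X\times_{\Spec\ZZ}\AA^{1}_{\ZZ}\to X$ to the corresponding projection over $S$; hence it descends to a colimit-preserving functor $\Spc_{\ZZ}\to\Spc_{S}$ between the $\AA^{1}$-Nisnevich localizations, and a functor of $\infty$-categories preserves equivalences. (Model-categorically: $\pi^{*}$ is left Quillen for the injective $\AA^{1}$-Nisnevich-local structures, all of whose objects are cofibrant, so Ken Brown's lemma applies.) Since $\KK_{\ZZ}\to\Spec\ZZ$ is an $\AA^{1}$-weak equivalence by \cref{KR3FoverZ}, applying $\pi^{*}$ gives that $\KK\to S$ is an $\AA^{1}$-weak equivalence in $\Spc_{S}$.

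Finally, a word on the hypotheses. The conclusion above in fact needs neither Noetherianness of $S$ nor perfectness of its residue fields: those assumptions are what make $\Spc_{S}$ and the base-change functoriality available in the form we use, and they are exactly what is needed for an alternative, self-contained derivation directly from \cref{A1-cont-over-perfect-fields} that bypasses $\Spec\ZZ$. In that approach one reduces, by Nisnevich descent on the base, to $S=\Spec R$ with $R$ Henselian local; by nilpotent invariance of $\Spc_{(-)}$ one may take $R$ reduced; and one then runs a Noetherian induction on $\dim S$, splitting off an open--closed decomposition $j\colon U\hookrightarrow S$, $i\colon Z\hookrightarrow S$ and using joint conservativity of $(j^{*},i^{*})$ to reduce the $\AA^{1}$-weak equivalence $\KK\to S$ to $\KK_{U}\to U$ and $\KK_{Z}\to Z$; the base case is $S$ the spectrum of a field, perfect by hypothesis, where \cref{A1-cont-over-perfect-fields} applies fibrewise. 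The main obstacle along this second route --- and the reason the Noetherian hypothesis is genuinely used --- is the unstable joint conservativity of $(j^{*},i^{*})$ and, more generally, the compatibility of $\AA^{1}$-localization with the localization recollement over a base that need not be regular; this is classical over fields and over $\Spec\ZZ$ but requires care in the stated generality.
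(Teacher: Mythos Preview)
Your argument is correct, but it proceeds differently from the paper. The paper does not base change from $\Spec\ZZ$; instead it invokes the fibrewise criterion \cref{pointwise:phenomenon} directly over $S$: for each point $s\in S$ with residue field $\kappa_s$, the fibre $\KK_s\to\Spec\kappa_s$ is $\AA^1$-contractible by \cref{A1-cont-over-perfect-fields} (this is where perfectness of $\kappa_s$ enters), and the gluing theorem then yields that $\KK\to S$ is an $\AA^1$-weak equivalence in $\Spc_S$.

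Your route --- apply \cref{pullbackfunctor} (equivalently \cref{cor:pushpull-A1-weak}(1)) to the structure map $\pi\colon S\to\Spec\ZZ$ and the $\AA^1$-weak equivalence $\KK_{\ZZ}\to\Spec\ZZ$ of \cref{KR3FoverZ} --- is shorter and, as you observe, makes the perfectness hypothesis on the residue fields of $S$ superfluous: all the arithmetic content is already packaged into \cref{KR3FoverZ}, and base change along an arbitrary $\pi$ preserves $\AA^1$-weak equivalences between smooth schemes. The paper's approach, by contrast, is more parallel to the proof of \cref{KR3FoverZ} itself and makes transparent where the input from \cref{A1-cont-over-perfect-fields} is used at each fibre; but it is logically redundant once \cref{KR3FoverZ} is in hand. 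Your closing paragraph sketching an alternative via open--closed recollement and Noetherian induction is interesting commentary but unnecessary for the proof, and as you note it runs into genuine unstable subtleties; I would drop it or relegate it to a remark.
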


We remark that these results are essentially concise applications of the gluing theorem of Morel-Voevodsky (\cite[Theorem 2.21]{MV99}). A related consequence is that all these extended families of $\AA^1$-contractible varieties add to the list of "potential" counter-examples to the ZCP in positive characteristic (see \cref{rem:AG-isnot-KR3F}) and thereby, producing exotic varieties in mixed characteristics as by-products. Following these results, we extend the $\AA^1$-contractibility of the \emph{generalized Koras-Russell varieties} constructed and studied by the authors in \cite{dubouloz2025algebraicfamilies} from a field of characteristic zero to arbitrary (perfect) fields. For simplicity, let us now look at one of its simplest prototypes.
\begin{defn*}
Consider the family of smooth affine varieties defined by the equation
$$\XX_m := \{\ul{x}^n z = y^r+t^s + x_0\} \subset \AA^{m+4}_k \cong  \Spec k[x_0,\dots,x_m,y,z,t] $$
The canonical morphism $\XX_m\to \Spec k$ makes it into a smooth affine scheme of dimension $(m+3)$, for all $m \ge 0$ and $n,r,s\ge 2$ with gcd $(r,s)=1$ and $\ul{x} := \prod_{i=0}^{m} x_i$.
\end{defn*}
Using \cref{KR3Fmain:Noetherian}, we essentially obtain the following results.
\begin{theorem*}(\cref{KR3Fprototypes:field-A1-cont:perfect})
Let $k$ be any (perfect) field. Then for all $m\geq 0$, the canonical morphism $\XX_m \to \Spec k$ is an $\AA^1$-weak equivalence in $\Spc_k$.
\end{theorem*}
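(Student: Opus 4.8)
I would argue by induction on $m$, viewing $\XX_m$ as a one-parameter degeneration over the affine line in the coordinate $x_m$ and feeding the inductive hypothesis into the Morel--Voevodsky gluing theorem, exactly as in the relative arguments underlying \cref{KR3FoverZ} and \cref{KR3Fmain:Noetherian}. The base case $m=0$ is immediate: $\XX_0=\{x_0^{\,n}z=x_0+y^r+t^s\}\subset\AA^4_k$ is a Koras--Russell threefold of type $(n,r,s)$ (the linear term being $x_0$, and $\gcd(r,s)=1$, $n,r,s\ge2$), so \cref{A1-cont-over-perfect-fields} applies verbatim and gives that $\XX_0\to\Spec k$ is an $\AA^1$-weak equivalence in $\Spc_k$.

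\textbf{The inductive step: decomposition.} Fix $m\ge1$ and assume $\XX_{m-1}\to\Spec k$ is an $\AA^1$-weak equivalence. Write $\ul{x}=x_0x_1\cdots x_m=\ul{x}'\,x_m$ with $\ul{x}'=\prod_{i=0}^{m-1}x_i$, and stratify $\XX_m$ by the function $x_m$: let $j\colon U\hookrightarrow\XX_m$ be the open locus $\{x_m\neq0\}$ and $i\colon Z\hookrightarrow\XX_m$ the complementary closed subscheme $\{x_m=0\}$. On $U$ the element $x_m$ is invertible, so $z\mapsto x_m^{-n}z$ is an automorphism of the coordinate ring that turns $\ul{x}^{\,n}z=y^r+t^s+x_0$ into $(\ul{x}')^{\,n}z=y^r+t^s+x_0$; hence $U\cong\XX_{m-1}\times_k\GG_m$ over $k$. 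Since $-\times_k\GG_m$ preserves $\AA^1$-weak equivalences in $\Spc_k$, the projection $U\to\GG_m$ is an $\AA^1$-weak equivalence by the inductive hypothesis. On $Z$ the equation $x_m=0$ annihilates $\ul{x}$, forcing $x_0=-(y^r+t^s)$; eliminating $x_0$ gives $Z\cong\Spec k[x_1,\dots,x_{m-1},y,z,t]\cong\AA^{m+2}_k$, which is $\AA^1$-contractible. Finally $\XX_m$ is smooth (by construction), its defining equation is irreducible so the coordinate ring is a domain, $x_m$ is a nonzerodivisor, and $Z$ is the principal divisor $\{x_m=0\}$ with trivial normal bundle $N_{Z/\XX_m}\cong\mathcal{O}_Z$.

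\textbf{The inductive step: gluing.} The coordinate $x_m$ defines a morphism $p\colon\XX_m\to\AA^1_k$ with $p^{-1}(\GG_m)=U$ and $p^{-1}(0)=Z$ as schemes, compatibly with the stratification $\AA^1_k=\GG_m\sqcup\{0\}$, and the generator $x_m$ identifies $N_{Z/\XX_m}$ with $p^{*}N_{\{0\}/\AA^1}$ over $Z\to\{0\}$. This is precisely the relative $\AA^1$-contractibility situation to which \cite[Theorem 2.21]{MV99} applies (the same input as for \cref{KR3Fmain:Noetherian}): homotopy purity, applied to $(\XX_m;U,Z)$, to $(\AA^1_k;\GG_m,\{0\})$, and to $p$, produces compatible deformation-to-the-normal-cone presentations in which $p$ induces $\AA^1$-weak equivalences on all the relevant pieces --- on the open parts $U\to\GG_m$, on the normal bundles $N_{Z/\XX_m}=p^{*}N_{\{0\}/\AA^1}\to N_{\{0\}/\AA^1}$ (a vector-bundle pullback along the $\AA^1$-weak equivalence $Z\to\{0\}$), and on the punctured normal bundles, where the comparison is the projection $Z\times\GG_m\to\GG_m$, an $\AA^1$-weak equivalence because $Z$ is $\AA^1$-contractible. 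Hence $p\colon\XX_m\to\AA^1_k$ is an $\AA^1$-weak equivalence, and composing with $\AA^1_k\xrightarrow{\sim}\Spec k$ completes the induction. (Equivalently, $\XX_m$ is the affine modification of the $\AA^1$-contractible scheme $\XX_{m-1}\times\AA^1$ along the $\AA^1$-contractible divisor $\{x_m=0\}\cong\XX_{m-1}$ with $\AA^1$-contractible center $\cong\AA^{m+1}_k$.)

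\textbf{The main obstacle.} The delicate point is that one wants the \emph{unstable} statement, in $\Spc_k$. The purely homological version of the above --- the cofiber sequences $U_+\to\XX_{m+}\to\Th(N_{Z/\XX_m})$ and $\GG_{m+}\to\AA^1_+\to\Th(N_{\{0\}/\AA^1})$ together with the map induced by $p$ --- only shows the attaching maps agree after stabilisation, hence reproves $\AA^1$-contractibility of $\XX_m$ in $\SH(k)$; it does not by itself yield an $\AA^1$-weak equivalence in $\Spc_k$, since cofiber sequences do not obey two-out-of-three unstably. The real content is therefore in the geometric gluing: one must use the deformation space and check that $p$ matches not merely the cofibers but the punctured normal bundles and the neighbourhoods of the centers by $\AA^1$-weak equivalences, uniformly in $m$ --- which is exactly where the triviality of $N_{Z/\XX_m}$ and the identifications $U\cong\XX_{m-1}\times\GG_m$, $Z\cong\AA^{m+2}$ are needed. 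Once the induction is anchored at $m=0$ by \cref{A1-cont-over-perfect-fields}, the remaining verifications --- smoothness of $\XX_m$ and the two chart descriptions --- are routine.
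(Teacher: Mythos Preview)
Your inductive strategy via the projection $p\colon\XX_m\to\AA^1_{x_m}$ works and is a genuinely different route from the paper's, but your justification misidentifies the mechanism. The gluing theorem \cite[Theorem~2.21]{MV99} has nothing to do with homotopy purity or deformation to the normal cone: it says that for a closed--open decomposition $i\colon Z\hookrightarrow S\hookleftarrow U\colon j$ of the \emph{base}, the pair $(i^*,j^*)$ is jointly conservative on $\HH(S)$. Once you note that $p$ is smooth, the correct argument is simply: $j^*p\colon U\to\GG_m$ is the base change of $\XX_{m-1}\to\Spec k$ along $\GG_m\to\Spec k$, hence an equivalence in $\Spc_{\GG_m}$ by \cref{pullbackfunctor} and induction; $i^*p\colon Z\cong\AA^{m+2}\to\Spec k$ is obviously an equivalence; so conservativity shows $p$ is an equivalence in $\Spc_{\AA^1}$, and \cref{cor:pushpull-A1-weak}(2) pushes it down to $\Spc_k$. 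Your ``main obstacle'' is then a red herring: working relative to $\AA^1$ rather than with cofiber sequences in $\Spc_k$ sidesteps the unstable difficulty entirely --- no punctured normal bundles or deformation spaces are needed.

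The paper instead maps the contractible hypersurface $H_m=\{z=0\}\cong\AA^{m+2}$ \emph{into} $\XX_m$ and compares cofiber sequences for the pairs $(H_m,H_m\setminus P_m)$ and $(\XX_m,\XX_m\setminus W_m)$ directly in $\Spc_k$, where $W_m=\{x_m=0\}$ and $P_m=H_m\cap W_m$. The outer vertical maps are equivalences by induction (since $i_m\simeq j_{m-1}\times\id_{\GG_m}$) and by purity; the key point is that the \emph{source} $H_m$ is already known to be contractible, so the second clause of \cref{weak5lemma} applies unstably. Both arguments rest on the same identification $\XX_m\setminus\{x_m=0\}\cong\XX_{m-1}\times\GG_m$; the paper stays in $\Spc_k$ and uses the weak five lemma with a contractible source, while you pass through $\Spc_{\AA^1}$ and use base-change conservativity with a contractible target.
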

\begin{corollary*}(\cref{KR3F:prototypes-A1-cont-base:Noetherian})
Let $S$ be any Noetherian scheme with perfect residue fields. Then the canonical morphism $\XX_m \to S$ is an $\AA^1$-weak equivalence in $\Spc_S$.
\end{corollary*}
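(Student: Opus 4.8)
The plan is to imitate, one step higher in relative dimension, the passage from \cref{A1-cont-over-perfect-fields} to \cref{KR3Fmain:Noetherian}, now taking \cref{KR3Fprototypes:field-A1-cont:perfect} as the input over fields. First I would observe that the defining equation $\ul{x}^{\,n}z=y^{r}+t^{s}+x_{0}$ has coefficients in $\ZZ$, so that $\XX_m$ is the base change to $S$ of a fixed smooth affine finitely presented scheme $\XX_{m,\ZZ}\to\Spec\ZZ$ (smoothness over $\ZZ$ being verified by the Jacobian criterion exactly as for $\KK$ in \cref{KR3FoverZ}); in particular the formation of $\XX_m$ commutes with arbitrary base change, and the fibre of $\XX_m\to S$ over a point $s\in S$ is the prototype $\XX_{m,\kappa(s)}\subset\AA^{m+4}_{\kappa(s)}$ attached to the residue field. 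Since $S$ has perfect residue fields, \cref{KR3Fprototypes:field-A1-cont:perfect} applies fibrewise: $\XX_{m,\kappa(s)}\to\Spec\kappa(s)$ is an $\AA^1$-weak equivalence in $\Spc_{\kappa(s)}$ for every $s\in S$ (and the same over $\overline{\kappa(s)}$, which is again perfect).

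To globalise, I would invoke the gluing theorem of Morel--Voevodsky \cite[Theorem 2.21]{MV99} exactly as in the proof of \cref{KR3Fmain:Noetherian}: after reducing (harmlessly) to the case of finite Krull dimension, a Noetherian d\'evissage of $S$ along closed/open decompositions presents $\XX_m\to S$ as an iterated homotopy pushout of the restrictions $\XX_m|_{S'}\to S'$ over locally closed strata $S'$; each such $S'$ again has perfect residue fields and $\XX_m$ restricts over it to a prototype of the same shape, so the fibrewise equivalences above glue to the asserted $\AA^1$-weak equivalence over $S$. A slicker alternative, which in fact proves the statement over an arbitrary Noetherian base, is to first establish $\XX_{m,\ZZ}\to\Spec\ZZ$ as an $\AA^1$-weak equivalence — the residue fields of $\Spec\ZZ$ are $\QQ$ and the $\FF_{p}$, all perfect, so this is the gluing theorem applied as in \cref{KR3FoverZ} — and then observe that for the structure morphism $p\colon S\to\Spec\ZZ$ the base-change functor $p^{*}\colon\Spc_{\Spec\ZZ}\to\Spc_{S}$ is left Quillen, hence preserves $\AA^1$-weak equivalences (all motivic spaces being cofibrant), and carries $\XX_{m,\ZZ}\to\Spec\ZZ$ to $\XX_m\to S$.

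I expect the only genuine point to check — and hence the main obstacle — to be that the gluing/d\'evissage argument is insensitive to replacing $\KK$ by $\XX_m$. But the argument for \cref{KR3Fmain:Noetherian} uses its geometric input only through three properties of the total space: smoothness, affineness and finite presentation over the base; compatibility of its formation with base change; and $\AA^1$-contractibility of its fibres over residue fields. The prototype $\XX_m$ enjoys all three, the last now being supplied by \cref{KR3Fprototypes:field-A1-cont:perfect} in place of \cref{A1-cont-over-perfect-fields}; the only numerical change is that the relative dimension is $m+3$ rather than $3$, which merely propagates through any Thom-space bookkeeping in the gluing step and through the statement of the conclusion. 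No further homotopy-theoretic input is required, and the corollary follows.
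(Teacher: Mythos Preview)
Your proposal is correct and follows essentially the same approach as the paper: both reduce to fibrewise $\AA^1$-contractibility over the residue fields via \cref{KR3Fprototypes:field-A1-cont:perfect} and then invoke the Morel--Voevodsky gluing theorem, which the paper has packaged as \cref{pointwise:phenomenon}. Your ``slicker alternative'' through $\Spec\ZZ$ and the left Quillen functor $p^{*}$ is a valid variant (it is exactly \cref{pullbackfunctor} in the paper's toolkit) and, as you note, has the pleasant side effect of removing the perfectness hypothesis on residue fields of $S$; the paper does not take this route for the corollary itself but records the analogous strengthening for $\KK$ in the remark following \cref{KR3Fmain:Noetherian}.
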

In the sequel, we study the existence of exotic motivic spheres: smooth $n$-dimensional schemes that are $\AA^1$-homotopic but not isomorphic to $\AA^n_k \bs\{0\}$ (see \cref{exoticspheres:defn}). We adopt this terminology in the light of the identification of the $\AA^1$-homotopy type of the strictly\footnote{quasi affine but not affine} quasi affine smooth schemes $\AA^n_k \bs \{0\}$ with that of the motivic spheres $\SS^{2n-1,n}$ (\cite[Example 2.20]{MV99}). The inception of this study is the following question.
\begin{qstn*}
Over a reasonably base scheme $S$, if two smooth schemes $X\simeq_{\AA^1} \AA^n_S \backslash \{0\}$ are $\AA^1$-homotopic, then are they isomorphic as $S$-schemes? 
\end{qstn*}
Put differently, the question demands the \textit{existence of exotic motivic spheres}. In \cref{sec:overview-exotic-motsph}, we shall provide an overview of our knowledge of this question. Our main goal here is to exhibit that the generalized Koras-Russell varieties pave the way for producing counterexamples to this question in higher dimensions, thereby closing this problem once and for all. The following result puts this perspective into words.
\begin{theorem*}(\cref{exotic-motspheres-countereg})
Let $k$ be any infinite perfect field. Then for any $m \ge 0$, the strictly quasi-affine variety $\XX_m\bs\{\bullet\}$ is an exotic motivic sphere.
\end{theorem*}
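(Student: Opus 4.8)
The plan is to verify the two conditions of \cref{exoticspheres:defn} for the strictly quasi-affine variety $Y:=\XX_m\bs\{\bullet\}$, where $\bullet\in\XX_m(k)$ is taken to be the origin of the ambient affine space (it lies on $\XX_m$, and since $\XX_m$ is smooth it is a smooth $k$-rational point), and $d:=m+3=\dim\XX_m$. Before anything else one records the bookkeeping: $Y$ is smooth of dimension $d$, being open in the smooth affine $\XX_m$; and it is \emph{strictly} quasi-affine because $\XX_m$ is normal and $\{\bullet\}$ has codimension $d\geq 3$, so by algebraic Hartogs $\Gamma(Y,\mathcal O_Y)=\Gamma(\XX_m,\mathcal O_{\XX_m})$ and the canonical map to the affine hull is the open immersion $Y\hookrightarrow\XX_m$, not an isomorphism. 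It remains to show: \textbf{(i)} $Y\wkeq_{\AA^1}\AA^d_k\bs\{0\}$ in $\Spc_k$; \textbf{(ii)} $Y\not\cong_k\AA^d_k\bs\{0\}$.

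For \textbf{(i)}: by \cref{KR3Fprototypes:field-A1-cont:perfect} the variety $\XX_m$ is $\AA^1$-contractible over $k$. Applying Morel-Voevodsky homotopy purity \cite{MV99} to the smooth closed pair $(\XX_m,\{\bullet\})$ and using that $\bullet$ is $k$-rational (so $\normal_{\bullet/\XX_m}$ is a trivial rank-$d$ bundle on $\Spec k$), the homotopy cofiber of $Y\hookrightarrow\XX_m$ is $\Th(\normal_{\bullet/\XX_m})\wkeq(\PP^1_k)^{\wedge d}\wkeq\SS^{2d,d}$; as $\XX_m\wkeq\Spec k$, this cofiber is also the simplicial suspension $\Sigma_{S^1}Y$ (for any $k$-rational basepoint). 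The identical computation for $(\AA^d_k,\{0\})$ gives $\Sigma_{S^1}(\AA^d_k\bs\{0\})\wkeq\SS^{2d,d}$, so $Y$ and $\AA^d_k\bs\{0\}$ have the same simplicial suspension; moreover both are $\AA^1$-$(d-2)$-connected with first non-trivial $\AA^1$-homotopy sheaf $\mathbf{K}^{MW}_{d}$ (Morel's connectivity theorem and computation; for $Y$ one reads this off the purity cofiber sequence together with the contractibility of $\XX_m$). Promoting this to an unstable equivalence $Y\wkeq\AA^d_k\bs\{0\}$ is the substantive point: one constructs a comparison map inducing the identification on $\pi^{\AA^1}_{d-1}$ and checks by a Whitehead-type argument that it is an $\AA^1$-equivalence, which goes through once $d$ is large enough (it is here, since $d=m+3\geq 3$). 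Rather than reprove this in full, I would invoke the ``smooth models of motivic spheres'' results of Asok-Doran-Fasel, which show precisely that an $\AA^1$-contractible smooth affine $k$-variety of dimension $d$ with a $k$-point has its point-deleted complement $\AA^1$-equivalent to $\AA^d_k\bs\{0\}\wkeq\SS^{2d-1,d}$; for $m=0$, where $\XX_0$ is --- after renaming parameters --- a Koras-Russell threefold, this is already contained in their work.

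For \textbf{(ii)}: suppose $\varphi\colon Y\xrightarrow{\ \sim\ }\AA^d_k\bs\{0\}$ were a $k$-isomorphism. Passing to global functions --- on the right $\Gamma(\AA^d_k\bs\{0\},\mathcal O)=k[x_1,\dots,x_d]$ since $d\geq 2$, on the left the Hartogs identification above --- $\varphi$ would induce a $k$-algebra isomorphism $\Gamma(\XX_m,\mathcal O_{\XX_m})\cong k[x_1,\dots,x_d]$, and applying $\Spec$ gives $\XX_m\cong_k\AA^d_k$. This contradicts \cite{dubouloz2025algebraicfamilies}, where the Makar-Limanov invariant (equivalently the Derksen invariant) of $\XX_m$ is shown to be non-trivial --- the coprimality $\gcd(r,s)=1$ being precisely what forces this --- whereas $\ML(\AA^d_k)=k$; for $m=0$ this is the classical Makar-Limanov computation for Koras-Russell threefolds. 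Hence $Y$ is a smooth $d$-dimensional scheme $\AA^1$-homotopy equivalent but not isomorphic to $\AA^d_k\bs\{0\}$, i.e. an exotic motivic sphere.

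The difficulty is concentrated entirely in step \textbf{(i)} --- and there in identifying $Y$ itself, not merely its simplicial suspension, with $\AA^d_k\bs\{0\}$: a straight suspension argument only recovers $\SS^{2d,d}$, and recovering $\AA^d_k\bs\{0\}$ requires the finer structural input on smooth models of motivic spheres (which is also why the threefold case is handled by appeal to prior work). Step \textbf{(ii)}, by contrast, is a short formal argument contingent only on the already-known non-triviality of the Makar-Limanov invariant of $\XX_m$.
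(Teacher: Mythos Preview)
Your step \textbf{(ii)} is correct and matches the paper's argument almost verbatim: Hartogs on both sides promotes a hypothetical isomorphism $\XX_m\bs\{\bullet\}\cong\AA^{m+3}\bs\{0\}$ to $\XX_m\cong\AA^{m+3}$, contradicting the Makar--Limanov computation.

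Step \textbf{(i)}, however, has a genuine gap. The result you attribute to Asok--Doran--Fasel --- that deleting a rational point from an arbitrary $\AA^1$-contractible smooth affine $d$-fold yields $\AA^d\bs\{0\}$ up to $\AA^1$-equivalence --- is not in \cite{ADF2017smooth}; that paper identifies \emph{specific} quadrics as sphere models and contains no such general statement. What you have actually established is only that $\Sigma_{S^1}Y\simeq\Sigma_{S^1}(\AA^d\bs\{0\})$, and desuspending this requires exactly the connectivity input you wave away. In particular, the claim that $Y$ is $\AA^1$-$(d-2)$-connected does \emph{not} follow from the purity cofiber sequence: cofiber sequences control homology, not low-degree homotopy sheaves, and even $\pi_0^{\AA^1}(Y)=\ast$ is non-obvious here (this is precisely where the hypothesis that $k$ be infinite is used in the paper, and it is the content of \cref{X-p-isA1-connected}). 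You also never produce a comparison map $Y\to\AA^d\bs\{0\}$ to which a Whitehead argument could be applied.

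The paper fills these gaps as follows. It works with the specific point $p=(1,\dots,1,0,1,0)$ and the explicit projection $\phi:\XX_m\to\AA^{m+3}$ forgetting $z$, so that there \emph{is} a map of cofiber sequences; \cref{tgtspaces-isomorphic} shows $d\phi_p$ is an isomorphism, whence the induced map on Thom spaces is an $\AA^1$-equivalence and so $\Sigma_{S^1}\phi|_{Y}$ is an equivalence. Separately, \cref{X-p-isA1-connected} proves $Y$ is $\AA^1$-chain connected (using that $k$ is infinite), hence $\AA^1$-connected; then Asok's $\AA^1$-excision \cite[Theorem 4.1]{asok2009A1-excision} gives $\pi_1^{\AA^1}(Y)=\ast$. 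With both sides $\AA^1$-simply connected, the $\AA^1$-Hurewicz theorem converts the suspension equivalence into an $\AA^1$-homology equivalence, and Shimizu's relative $\AA^1$-Whitehead theorem \cite[Theorem 1.1]{shimizu2022relative} finishes. Each of these ingredients is doing real work that your sketch does not supply.
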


Examples of motivic spaces that are $\AA^1$-homotopic to $\AA^n\bs \{0\}$ without being isomorphic to $\AA^n \bs \{0\}$ are not hard to construct and have appeared in the literature. However, the novelty here is that the ones produced in this article are, to the author's knowledge, the first family of examples belonging to the same dimension.

\section*{Acknowledgments}
The author wishes to express his sincere gratitude to his mentors Adrien Dubouloz and Paul Arne {\O}stv{\ae}r for their invaluable guidance and unceasing support. The author is grateful for various research funding offered by the Universit\`a degli Studi di Milano, Italy and Universit\'e de Bourgogne-Europe, France, for facilitating this work in a collaborative environment. The author was also a recipient of the grant "Programme Vinci C2-379" and greatly acknowledges the l’Université Franco Italienne / Università Italo Francese (UFI/UIF) for their generous support towards the mobility.

\section{Historical Outset}\label{var:exotic}
The characterization of the affine $n$-space $\AA^n$ is a classical problem that traces back to the 1950s, aimed to classify the real Euclidean spaces $\RR^n$ up to topological contractibility. In dimensions $n\le 2$, $\RR^n$ is the unique smooth manifold that is contractible; the story threw a curve ball in dimension 3 already. In an attempt to solve the Poincar\'e Conjecture in dimension 3, Whitehead put forth a proof \cite{whitehead1934certain} where, in essence, he argued that an open contractible real 3-manifold that is homotopic to $\RR^3$ is necessarily homeomorphic to $\RR^3$. However, this argument collapsed owing to his own counter-example - now called the \emph{Whitehead manifold} \cite{whitehead1935prototype}. Soon after this discovery, several higher-dimensional prototypes of Whitehead manifolds emerged (see \cite[\S 1.1]{asok2021A1} for a beautiful overview and chronological contributions), which apparently did not facilitate $\RR^n$ to be characterized by its contractibility alone. Nevertheless, it was later observed that $\RR^3$ stayed simply connected in the complement of any larger compact subset, while the Whitehead manifold did not, revealing that $\RR^3$ is \emph{simply connected at infinity}. Thus, invariants coming from topology at infinity found a home in the heart of such a characterization. Building on several mathematical predecessors, the final breakthrough happened in \cite{siebenmann1968detecting}, proving that $\RR^n$ is the unique open contractible $n$-manifold that is simply connected at infinity for all $n\ge 3$. The aforementioned homotopical characterization of $\RR^n$ is sometimes called the \emph{Open Poincar\'e Conjecture}. 
\medskip

Recall from topology that a smooth complex $n$-manifold is said to be \emph{exotic} if it is topologically contractible but not isomorphic to $\CC^n$. In analogy, we say that a smooth affine $n$-dimensional variety over a field $k$ is \emph{exotic} if it is $\AA^1$-contractible but isomorphic to $\AA^n_k$. As we will witness in this note, the algebro-geometric analog of the Whitehead manifold will be played by a family of smooth affine threefolds called the \emph{Koras-Russell threefolds} $\KK$ (\cref{defn:KR3F}) and it is one of the overriding dreams in motivic homotopy theory to construct "motivic topology at infinity" characterizing $\AA^n$ among spaces similar to $\KK$ (see \cite[\S 6.4]{asok2007unipotent}, \cite{DDO2022punctured}).
\medskip

We briefly introduce the following notions from affine algebraic geometry.
\begin{defn}\label{def:A1-fib-space}
An \emph{$\AA^n$-fiber space} over a scheme $S$ is a smooth morphism of finite presentation $f:X\to S$ whose fibers $X_s := X\times_S \Spec \kappa(s)$ over all points $s\in S$ are isomorphic to the affine space $\AA^n$ over the corresponding residue field $\kappa(s)$.
\end{defn}

\begin{defn}
An $\AA^n$-fiber space $f:X\to S$ as above is called a \emph{locally trivial $\AA^n$-bundle} in the Zariski (resp. Nisnevich, resp. \'etale) topology if every $s\in S$ admits a Zariski (resp. Nisnevich, resp. \'etale)  neighborhood $U$ in $S$ such that  $U\times _S X\cong U\times \AA^n$ as schemes over $U$. 
\end{defn}

We adopt the terminology of "$\AA^n$-fiber space" as an alternative to the classical notion of "$\AA^n$-fibration" or "affine fibration", which is traditionally used by the authors from the affine algebraic geometry realm. This is to avoid any possible confusion, either with that of fibrations from the model category or that of topology.

\section*{Convention} 
Unless specified otherwise, all schemes considered are smooth and are of finite type over a base scheme. Throughout, $k$ will denote the base field and $S$ will denote a Noetherian base scheme of finite Krull dimension. The category of smooth $S$-schemes of finite type is denoted by $\Sm_S$. We denote the category of presheaves (resp. sheaves) by $\Psh(\Sm_s)$ (resp. $\Shv(\Sm_S)$) and the category of motivic $S$-spaces which are simplicial presheaves by $\Spc_S:= \sPsh(\Sm_S)$. The unstable (resp. stable) $\AA^1$-homotopy category over a scheme $S$ will be denoted by $\HH(S)$ (resp. $\SH(S)$). For motivic spaces $X, Y \in \Spc_S$, its $\AA^1$-weak equivalence is denoted by $X \simeq Y$ (lives in $\mcal{H}(S))$, while its isomorphism is denoted by $X\cong Y$ (lives in $\Sm_S$).

\section{Motivic homotopy theory}
\subsection{Preliminaries}
Motivic homotopy (also known as the $\AA^1$-homotopy) theory is a nascent domain in algebraic geometry, established by Morel and Voevodsky \cite{MV99}. The conceptual aim was to provide a homotopical framework for algebraic varieties, tackling the then-open Milnor Conjecture (cf. \cite{orlov2007exact}) and the Bloch-Kato Conjecture (cf. \cite{voevodsky2003Z/2}). Soon after its formulation, it became clearer that the $\AA^1$-homotopy category provides a natural ground to several of Grothendieck's visions towards the universal theory of motives. The underpinning of the $\AA^1$-homotopy category is facilitated by the homotopy theory of the simplicial presheaves, equivalently, the category of \emph{motivic $S$-spaces} $\Spc_S:= \sPsh(\Sm_S)$ equipped with the discrete simplicial structure. The category $\Spc_S$ is then subjected to two formal Bousfield localizations, thereby producing the desired $\AA^1$-homotopy category $\mcal{H}(S)$. Roughly speaking, the first localization inverts the affine line $\AA^1$ by a process of \emph{$\AA^1$-localization}, thereby producing motivic spaces that are $\AA^1$-invariant; in other words, a space $X \to S$ is \emph{$\AA^1$-invariant} if the induced morphism
    $$\pr_1^*: \Hom_{\Spc_S}(U,X)\to \Hom_{\Spc_S}(\AA^1_U, X)$$
is a bijection for the canonical projection $\pr_1:\AA^1_U\to U$, for every $U\in \Sm_S$. The $\AA^1$-localization functor $\L_{\AA^1}: \Psh(\Sm_S) \to \Psh_{\AA^1}(\Sm_S)$ is the left adjoint to the inclusion functor $\iota: \Psh_{\AA^1}(\Sm_S) \hookrightarrow \Psh(\Sm_S)$ of presheaves that are $\AA^1$-invariant. The second localization inverts the descent maps into weak equivalences; that is, a simplicial presheaf $X\in \Spc_S$ satisfies \emph{Nisnevich descent} if for every \v{C}ech covers $\check{U}_{\bullet}\to X$, we have that the ${\mathrm{hocolim}_n\ U_n \xrightarrow{\simeq} X}$ is a weak equivalence of simplicial presheaves. The Nisnevich localization inverts these maps in the $\AA^1$-homotopy category via the functor $\L_{\Nis}: \Psh(\Sm_S)\to \L_{\Nis} \Psh(\Sm_S)$ which is the left derived functor of the identity functor. Although in a general context, one replaces the \v{C}ech covers by hypercovers and studies the Nisnevich hyperdescent (see \cite[\S 3.4]{antieau2017primer}). The \emph{$\AA^1$-homotopy category} is then defined as the subcategory of $\sPsh(\Sm_S)$ consisting of simplicial presheaves those that are both $\AA^1$-invariant $\sPsh(\Sm_S)_{\AA^1}$ and satisfying the Nisnevich descent $\Shv_{\Nis}(\Sm_S)$, 
    $$\mcal{H}(S):= \sPsh(\Sm_S)_{\AA^1} \cap \Shv_{\Nis}(\Sm_S) \subset \sPsh(\Sm_S)=:\Spc_S. $$
However, note that for a simplicial presheaf $\mcal{F}$, $\L_{\AA^1}( \L_{\Nis}(\mcal{F}))$ need not be a Nisnevich sheaf and $\L_{\Nis}(\L_{\AA^1}(\mcal{F}))$ need not be $\AA^1$-invariant (cf. \cite[\S 3, Example 2.7]{MV99}). This issue is circumvented by 
considering the \emph{motivic localization functor} $\L_{mot}: \Psh(\Sm_S) \to \mcal{H}(S)$ which is the left adjoint of the inclusion $\iota: \mcal{H}(S) \hookrightarrow \Psh(\Sm_S)$ defined as 
$$\L_{mot}:= \textrm{colim} (\L_{\Nis}\to \L_{\AA^1}\L_{\Nis} \to \L_{\AA^1}\L_{\Nis}\L_{\AA^1}\to \dots..) =  (\L_{\AA^1}\circ \L_{\Nis})^{\circ \mathbb{N}}.$$
Being a left adjoint, it preserves colimits. Furthermore, it is locally cartesian and preserves finite products (\cite[Proposition 3.15 \& 4.1]{hoyois2017six}). For a detailed background on the construction of the motivic homotopy category, we redirect the reader to \cite{hoyois2017six, antieau2017primer, asok2021A1}. Let us now recollect some basic jargon in the $\AA^1$-homotopy category.
\begin{defn}\label{defn:A1-conn}
The \emph{$\AA^1$-connected component sheaf} of a scheme $X\to S$ is the Nisnevich sheaf associated to the presheaf 
        $$U\in \Sm_S \mapsto \Hom_{\mcal{H}(S)}(U,X) $$
for all $U\in \Sm_S$. The $\AA^1$-connected component sheaf of $X$ is denoted by $\pi_0^{\AA^1}(X)$ and is a Nisnevich sheaf of sets on $\Sm_S$. We say that a scheme $X$ is \emph{$\AA^1$-connected} if $\pi_0^{\AA^1}(X)$ is isomorphic to the trivial sheaf $S$.
\end{defn}
\begin{example}\label{S-point}
Every $\AA^1$-connected smooth scheme $X$  over a locally Henselian scheme $S$ admits an $S$-point (\cite{MV99}, Remark 2.5). In other words, any variety that does not possess a rational point cannot be $\AA^1$-connected as a variety.
\end{example}
\begin{defn}
A $k$-scheme $X$ is \emph{$\AA^1$-rigid} if the induced morphism         
    $$\pr_1^*: \Hom_{\Sm_S}(U,X)\to \Hom_{\Sm_S}(U\times \AA^1, X)$$
is a bijection, for the canonical projection morphism $\pr_1:\AA^1_U \to U$, for every $U\in \Sm_S$.
\end{defn}
\begin{example}
Any 0-dimensional smooth scheme over a field is $\AA^1$-rigid (\cite[Example 2.1.10]{asok2021A1}). The affine variety  $\GG_m$ is $\AA^1$-rigid (\cite[Example 4.1.10]{asok2021A1}). Any smooth projective curve of positive genus and any Abelian variety are $\AA^1$-rigid varieties. In the view of $\AA^1$-connectedness, an $\AA^1$-rigid variety $X$ is $\AA^1$-connected if and only if it is trivial $X\cong \Spec k$.
\end{example}
For more examples, one can refer to \cite[\S 2]{choudhury2024}, \cite[\S 4.1]{asok2021A1}. In parallel to topology, one defines the analogous notion of path-connectedness in algebraic geometry via images of the affine line $\AA^1$.

\begin{defn}\label{defn:A1-chainconn}
A $k$-scheme $X$ is \emph{$\AA^1$-chain connected} if for any finitely generated separable field extension $L/k$ and for all rational points $x$ and $y \in X(L)$, there exists rational points $x= x_0,\dots, x_{n-1}$, $x_n =y\in X(L)$ and an \emph{elementary $\AA^1$-weak equivalence} $f_i: \AA^1_L \to X$ between $x_{i-1}$ and $x_i$, that is, $f_i(0)= x_{i-1}$ and $f_i(1)= x_i$ for $1\le i\le n$.
\end{defn}

\begin{example}
Affine $n$-spaces or more generally, any $n$-dimensional smooth $k$-variety covered by affine spaces in the sense of \cite[Definition 2.2.10]{asokmorel2011} are $\AA^1$-chain connected. Any $\AA^1$-chain connected variety is $\AA^1$-connected (\cite[Proposition 2.2.7]{asokmorel2011}) but the converse is not true (\cite[\S 4.1]{balwe2015a1}). The smooth exotic surfaces such as Ramanujam surface and tom Dieck-Petrie surfaces cannot be $\AA^1$-chain connected as they are not even $\AA^1$-connected (this is a consequence of the characterization of $\AA^2$ (\cite[Theorem 1.1]{choudhury2024}) over a field of characteristic zero.
\end{example}

\begin{defn}
A motivic $S$-space $\XX$ is said to be \emph{$\AA^1$-contractible} if the canonical map $\XX \to S$ is an $\AA^1$-weak equivalence in $\Spc_S$ or equivalently, an isomorphism in $\mcal{H}(S)$.    
\end{defn}

\begin{example}\label{A1-contr:egs}
Here are some of the basic examples of $\AA^1$-contractible spaces in nature.
\begin{itemize}
\item By construction, the affine line $\AA^1$ is $\AA^1$-contractible. And so, by induction on the dimension $n\ge 1$, the affine $n$-spaces form the prototypical examples of $\AA^1$-contractibles. In fact, the map
$$\AA^n\times \AA^1\to \AA^n \quad (x_1\dots,x_n,t)\mapsto (tx_1,\dots,tx_n)$$
corresponds to the usual radial rescaling morphism, which is an $\AA^1$-weak equivalence.
\item The singular cuspidal curves $C_{r,s}$ defined by the equation $\{y^r-t^s= 0 \}$, for coprime integers $r,s$ are $\AA^1$-contractible (\cite[Example 2.1]{asok2007unipotent}). In fact, the normalization map
        $$\AA^1 \to C_{r,s} \quad w \mapsto (w^s, w^r) $$
is an $\AA^1$-weak equivalence of motivic spaces.
\item Any vector bundle morphism is an $\AA^1$-homotopy equivalence (\cite[Example 2.2]{MV99}). More generally, any Zariski (resp. Nisnevich) locally trivial bundles with $\AA^1$-contractible fibers are $\AA^1$-weak equivalences.
\item Any $\AA^1$-contractible variety $X$ is necessarily $\AA^1$-connected. Hence, $X$ is $\AA^1$-rigid if and only if $X$ is trivial.
\end{itemize}
\end{example}

\subsection{Functoriality in Motivic Homotopy Theory}
We shall discuss the functoriality in the motivic homotopy theory with a special emphasis on the base change property that is crucial for our context. For a versatile background on functoriality, we refer to \cite[\S 3]{MV99}, \cite{Ayoub2007six, CD2019}. For other recent accounts, see also e.g., \cite[\S 4]{hoyois2017six}, \cite[\S 2]{ELSO22}, and \cite[\S 3]{rondigs2025grothendieck}. We shall now outsource several results from the articles mentioned above, wherein our contribution is the style of the presentation and the explicit illustration  $\AA^1$-contractibility in the relative setting (\S \ref{intro:rel-A1-contr}).

\subsubsection{The Base Change}
Let $f: T \to S$ be a morphism of base schemes\footnote{By a base scheme, we mean a Noetherian separated scheme of finite Krull dimension}. Then the pullback along $f$ defines a functor 
    \begin{align*}
    &f^{\bullet}: \Sm_S\longrightarrow \Sm_T\\
    & \hspace{10mm} U \hspace{2mm} \longmapsto U_T:= U \times_S T.
    \end{align*}
Precomposition with $f^{\bullet}$, we obtain another colimit preserving functor at the level of motivic spaces defined as
\begin{align*}
    & f_*: \Spc_T \longrightarrow \Spc_S\\
    & \hspace{7mm} \mcal{F} \hspace{5mm} \longmapsto \mcal{F} \circ f^{\bullet}:= U\mapsto \mcal{F}(U_T). 
\end{align*}
By the left Kan extension, we see that $f_*$ admits a left adjoint $f^*: \Spc_S \to \Spc_T$ on the level of motivic spaces, which is strict symmetric monoidal and preserves limits. Since every (pointed) motivic space is a colimit of representable motivic spaces, we can characterize $f^*$ by the following formula
\begin{align}\label{pshv-pullback}
    f^*(X_+) = (X \times_S T)_+ \quad \text{for all}\  X\in \Sm_S. 
\end{align}

\subsubsection{Smooth Base Change}
If furthermore, $f: T \to S$ is a smooth morphism of base schemes, then composition with $f$ defines a functor $f_{\bullet}: \Sm_T\to \Sm_S$. Precomposition with this functor defines another functor $f^{\bigstar}: \Spc_S \to \Spc_T$ which itself admits a left adjoint $f_{\sharp}: \Spc_T \to \Spc_S$ via (enriched) Kan extension, that is $ f_{\sharp} \dashv f ^{\bigstar} \dashv f_{*}$. Again, using the fact that any motivic space is a colimit of representable ones, $f_\sharp$ can be characterized by the formula
    $$f_\sharp(X\xrightarrow{g} T)_+ = (X \xrightarrow{g} T \xrightarrow{f} S) $$
for every $X\in \Spc_T$. In other words, $f_\sharp$ is simply induced by the forgetful functor $\mcal{U}: \Sm_S\to \Sm_T$. On the other hand, if $g : Z \to S$ is a morphism, then the canonical $S$-morphism $Z \times_S T$ defines a map $B(Z) \to f_*f^{\bigstar} B(Z)$ which is natural in $Z$ and $B \in \Spc_T$, and consequently, a natural transformation $\Id_{\Spc_S}\to f_*\circ f^\bigstar$. Moreover, for a smooth morphism $\phi: X \to Y$, the adjoint $ f^* \to f^\bigstar $ of the natural transformation $\Id_{\Spc_S}\to f_*\circ f^\bigstar$ is a natural transformation (\cite[Lemma 3.7]{rondigs2025grothendieck}). Denoting by $Lf^*$ (resp. $RF_*$) the left (resp. right) derived Quillen functor on the level of $\AA^1$-homotopy categories, the following provides us with the concise base change functoriality relating the category of motivic spaces and the associated $\AA^1$-homotopy category.

\begin{prop}
Let $f : T\to S$ be a morphism of schemes, let $\Spc_S$ and $\Spc_T$ be the category of motivic spaces endowed with their respective local projective model structures, and let $\HH(S)$ and $\HH(T)$ be their respective localizations with their $\AA^1$-local projective model structure. Then the following holds:
\begin{enumerate}
  \item The adjoint pair $(f^*,f_*)$ induces Quillen adjunctions $$\Spc_S \stackrel{\overset{f_*}{\longleftarrow}}{\underset{f^*}{\longrightarrow}} \Spc_T  \quad \textrm{and}  
  \quad \HH(S) \stackrel{\overset{Rf_*}{\longleftarrow}}{\underset{Lf^*}{\longrightarrow}}\HH(T).$$
  \item If $f :T\to S$ is smooth, then the adjoint pair $(f_\sharp,f^{\bigstar})$ induces Quillen adjunctions 
  $$\Spc_T \stackrel{\overset{f^\bigstar}{\longleftarrow}}{\underset{f_\sharp}{\longrightarrow}}\Spc_S \quad  \textrm{and} 
  \quad \HH(T)\stackrel{\overset{Lf^\bigstar}{\longleftarrow}}{\underset{Rf_\sharp}{\longrightarrow}}\HH(S).$$
\end{enumerate}
\end{prop}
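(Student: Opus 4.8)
The plan is to construct both Quillen adjunctions stagewise, mirroring the construction of the motivic model structure as an iterated left Bousfield localization. Starting from the global (objectwise) projective model structure on $\Spc_S = \sPsh(\Sm_S)$, one first localizes at the maps associated to Nisnevich covers to obtain the local projective structure whose fibrant objects satisfy Nisnevich descent, and then localizes at the set of projections $\{\AA^1_U \to U : U\in\Sm_S\}$ to obtain the motivic structure underlying $\HH(S)$; the same recipe over $T$ produces $\HH(T)$. Since $\Spc_S$ is a left proper, combinatorial (indeed simplicial) model category, all these localizations exist, and one may invoke the standard criterion: a Quillen pair between the global structures descends to a Quillen pair between two left Bousfield localizations precisely when the total left derived functor carries one localizing set into the weak equivalences of the localized target. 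Thus at each of the two localization stages it suffices to verify this single condition.

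\emph{Global level.} In (1) the functor $f_*$ is precomposition with $f^\bullet$, hence preserves objectwise fibrations, objectwise trivial fibrations, and objectwise weak equivalences; therefore $(f^*,f_*)$ is a Quillen pair for the global projective structures. (Equivalently, by \eqref{pshv-pullback} the functor $f^*$ sends representables to representables, hence sends generating projective and trivial projective cofibrations to such.) In (2), for $f$ smooth, $f^\bigstar$ is precomposition with $f_\bullet : \Sm_T\to\Sm_S$, and so again preserves objectwise fibrations and trivial fibrations, while $f_\sharp$ sends a representable $X\to T$ to the representable $X\to S$; hence $(f_\sharp,f^\bigstar)$ is a Quillen pair for the global projective structures.

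\emph{Nisnevich and $\AA^1$ localizations.} The key geometric input is the stability of Nisnevich (elementary distinguished) squares. Pullback along $f$ carries such a square in $\Sm_S$ to one in $\Sm_T$, because open immersions and \'etale morphisms are stable under base change and the identification over the closed complement is preserved; dually, since being \'etale is an absolute property of a morphism, an elementary distinguished square in $\Sm_T$ reinterpreted over $S$ through the smooth map $f$ is again an elementary distinguished square in $\Sm_S$. Hence $\mathbf{L}f^*$ (resp. $\mathbf{L}f_\sharp$) carries the Nisnevich-localizing maps over $S$ (resp. over $T$) into those over $T$ (resp. over $S$), so the criterion upgrades both pairs to Quillen pairs for the local projective structures. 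For the second localization, $f^*$ sends $\AA^1_U\to U$ to $\AA^1_{U\times_S T}\to U\times_S T$ by \eqref{pshv-pullback}, one of the $\AA^1$-localizing maps over $T$; and, for $f$ smooth, $f_\sharp$ sends $\AA^1_{U'}\to U'$ with $U'\in\Sm_T$ to the $S$-scheme projection $\AA^1_{U'}\to U'$, using that $\AA^1_{U'}\cong \AA^1_S\times_S U'$ as an $S$-scheme. Applying the criterion once more, $(f^*,f_*)$ and $(f_\sharp,f^\bigstar)$ are Quillen pairs for the motivic ($\AA^1$-local projective) model structures.

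\emph{Conclusion.} Passing to total derived functors turns each Quillen pair into an adjunction between the corresponding $\AA^1$-homotopy categories, which are exactly the displayed adjunctions; note in passing that $f^\bigstar$ is simultaneously left Quillen (against $f_\sharp$) and right Quillen (against $f_*$), so its left and right derived functors coincide. The only genuinely non-formal ingredient in the argument is the stability of Nisnevich distinguished squares under base change and under composition with a smooth morphism — this is the point at which real care is required; everything else is bookkeeping around the universal property of left Bousfield localization.
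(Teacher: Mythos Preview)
Your argument is correct and follows essentially the same route as the paper's: establish the Quillen adjunction at the projective level and then show it descends through the Nisnevich and $\AA^1$ localizations by checking that the left adjoints preserve the localizing maps (the paper phrases this via hypercovers and cites \cite[A.2.8.7, A.3.7.9]{lurie2009HTT}, while you use distinguished squares and state the Bousfield localization criterion directly, but the content is identical). One small slip in your closing parenthetical: since $f_\sharp \dashv f^{\bigstar}$ and $f^{\bigstar}\simeq f^* \dashv f_*$, the functor $f^{\bigstar}$ is \emph{right} Quillen against $f_\sharp$ and \emph{left} Quillen against $f_*$, not the other way around.
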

\begin{proof} 
The fact that $(f^*, f_*)$ (resp. $(f_\sharp, f^{\bigstar})$ when $f$ is smooth) induces a Quillen adjunction between the corresponding model categories is a consequence of using the projective model structure, and this follows from \cite[Proposition A.2.8.7]{lurie2009HTT}. Indeed, the first assertion follows from the observation that if $U_\bullet \to X$ is a Nisnevich hypercover of a smooth $S$-scheme $X$, then $(U_T)_{\bullet}\to X_T$ is a Nisnevich hypercover of $X_T$ and that for every smooth $S$-scheme $X$, $(X\times_S \AA^1_S)\cong X_T\times_T \AA^1_T$ which implies by \cite[Proposition A.3.7.9]{lurie2009HTT} that the Quillen adjunction 
$$\Spc_S \stackrel{\overset{f_* \circ \mathrm{id}}{\longleftarrow}}{\underset{\mathrm{id} \circ f^*}{\longrightarrow}} \Spc_T $$ 
induces a Quillen adjunction 
$$\HH(S) \stackrel{\overset{\mathrm{id}\circ f^* \circ \mathrm{id}}{\longleftarrow}}{\underset{\mathrm{id} \circ f_*\circ \mathrm{id}}{\longrightarrow}}\HH(T)$$ 
The second assertion follows for the same reasons after observing that when $f: T\to S$ is smooth, a Nisnevich hypercover $U_\bullet \to X$ of a smooth $T$-scheme $X$ is again a Nisnevich hypercover of $X$ considered as a smooth $S$-scheme via the composition with $u$ and that for every smooth $T$-scheme $X$, $X\times_T \AA^1_T\cong X\times_S \AA^1_S$ as schemes over $S$.
\end{proof}
\begin{corollary}\label{cor:pushpull-A1-weak} Let $f :T\to S$ be a morphism of schemes. Then the following holds:
\begin{enumerate}
\item For every morphism $u:Y\to X$ between smooth $S$-schemes which is an $\AA^1$-weak equivalence in $\Spc_S$, the morphism $u_T: Y_T\to X_T$ is an $\AA^1$-weak equivalence in $\Spc_T$.  
\item If $f: T\to S$ is smooth, then every morphism $u: Y\to X$ between smooth $T$-schemes which an $\AA^1$-weak equivalence in $\Spc_T$ is also an $\AA^1$-weak equivalence in $\Spc_S$.
\end{enumerate}
\end{corollary}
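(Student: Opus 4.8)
The plan is to deduce both assertions formally from the Quillen adjunctions established in the preceding Proposition, by invoking Ken Brown's lemma: a left Quillen functor preserves weak equivalences between cofibrant objects. The Proposition exhibits $(f^*,f_*)$ as a Quillen pair for the $\AA^1$-local projective model structures presenting $\HH(S)$ and $\HH(T)$ — so that $f^*$ is a left Quillen functor $\HH(S)\to\HH(T)$ — and, when $f$ is smooth, it exhibits $(f_\sharp,f^{\bigstar})$ as such a pair, with $f_\sharp$ a left Quillen functor $\HH(T)\to\HH(S)$. Hence it remains only to check that the schemes in question are cofibrant and to identify the values of $f^*$ and $f_\sharp$ on representables.

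For the cofibrancy point: in the projective model structure on $\sPsh(\Sm_S)$ every representable presheaf, regarded as a discrete (constant) simplicial presheaf, is cofibrant — the generating projective cofibrations include the maps $\emptyset=\partial\Delta^0\times h_U\to\Delta^0\times h_U$, i.e.\ $\emptyset\to h_U$ — and in the pointed setting the associated object $X_+$ is cofibrant as well. Passing from the projective model structure to its Nisnevich-local and then $\AA^1$-local Bousfield localizations does not enlarge the class of cofibrations, since a left Bousfield localization alters only the weak equivalences and the fibrations. Therefore smooth $S$-schemes (resp.\ smooth $T$-schemes) remain cofibrant in the model category presenting $\HH(S)$ (resp.\ $\HH(T)$).

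Now for (1): the morphism $u:Y\to X$ is by hypothesis an $\AA^1$-weak equivalence between the cofibrant objects $Y,X\in\Spc_S$, so $f^*(u)$ is an $\AA^1$-weak equivalence in $\Spc_T$ by Ken Brown's lemma applied to the left Quillen functor $f^*$. By the characterization \eqref{pshv-pullback} one has $f^*(X_+)=(X\times_S T)_+=(X_T)_+$ and likewise for $Y$, so $f^*(u)$ is precisely $u_T:Y_T\to X_T$ (the unpointed version follows verbatim, or by forgetting basepoints). For (2): when $f$ is smooth, $u:Y\to X$ is an $\AA^1$-weak equivalence between cofibrant objects of $\Spc_T$, so $f_\sharp(u)$ is an $\AA^1$-weak equivalence in $\Spc_S$, again by Ken Brown's lemma for the left Quillen functor $f_\sharp$. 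Since $f_\sharp$ is induced by the forgetful functor $\Sm_T\to\Sm_S$ — concretely $f_\sharp(Z\xrightarrow{g}T)=(Z\xrightarrow{g}T\xrightarrow{f}S)$ — it carries $u$ to the same morphism $u$ now viewed between $S$-schemes, which is the assertion.

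\textbf{Expected main obstacle.} The only genuinely delicate point is the cofibrancy verification: left Quillen functors need not preserve arbitrary weak equivalences, so one must know that smooth schemes are cofibrant objects of the relevant localized model structures before Ken Brown's lemma can be applied. This is exactly where working with the projective (rather than, say, the injective) model structure is convenient, since there representables are automatically cofibrant and remain so under left Bousfield localization; once this is in place, everything else is a formal unwinding of the adjunctions recorded in the preceding Proposition.
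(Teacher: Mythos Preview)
Your proposal is correct and follows essentially the same route as the paper's own proof: both arguments observe that representable presheaves are cofibrant in the projective model structure (and remain so after left Bousfield localization), then apply Ken Brown's lemma to the left Quillen functors $f^*$ and $f_\sharp$ furnished by the preceding Proposition, identifying their values on representables with $u_T$ and with $u$ viewed over $S$, respectively. Your write-up is in fact slightly more explicit about the cofibrancy step than the paper's.
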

\begin{proof}
For the first assertion, we consider the following commutative diagram 
\[\begin{tikzcd}
    \Spc_S  & \Spc_T  \\ 
    \HH(S) & \HH(T)
    \arrow["f^*", from=1-1, to=1-2]
    \arrow["Lf^*", from=2-1, to=2-2]
    \arrow["\mathrm{id}", from=1-1, to=2-1] 
   \arrow["\mathrm{id}",from=1-2, to=2-2]
\end{tikzcd}\]
The functor $f^*:\Spc_S \to \Spc_T$ maps the presheaves associated to the smooth $S$-schemes $X$ and $Y$ to the presheaves associated to the smooth $T$-schemes $X_T$ and $Y_T$, respectively, and maps the natural transformation associated to $u: Y\to X$ onto that associated to $u_T: Y_T\to X_T$. On the other hand, by construction of the projective model structure on $\Spc_S$, the presheaves associated to $X$ and $Y$ are cofibrant objects of $\Spc_S$, whence of $\HH(S)$ by definition of left Bousfield localization. Thus, $u: Y\to X$ is an $\AA^1$-weak equivalences between cofibrant objects of $\Spc_S$, and since $f^*$ is a left Quillen functor (that is, using the adjunction $f^*\dashv f_*$), it follows from Brown's lemma (see \cref{ken-brown-lemma}) that the image  $u_T:Y_T\to X_T$ of $u: Y\to X$ is $\AA^1$-weak equivalence in $\Spc_T$. The second assertion follows from the same reasoning using the commutative diagram 
\[\begin{tikzcd}
	 \Spc_T  & \Spc_S  \\
	 \HH(T) & \HH(S) 
	\arrow["f_\sharp", from=1-1, to=1-2]
    \arrow["Lf_\sharp", from=2-1, to=2-2]
    \arrow["\mathrm{id}", from=1-1, to=2-1] 
   \arrow["\mathrm{id}",from=1-2, to=2-2]
\end{tikzcd}\]
and the fact that $f_\sharp$ maps the presheaves associated to the smooth $T$-schemes $X$ and $Y$ to those associated to $X$ and $Y$ considered as smooth $S$-schemes via the composition with $f: T\to S$. 
\end{proof}

\begin{prop} \label{pullbackfunctor} 
Let $u: X \to S$ be a smooth $\AA^1$-contractible $S$-scheme. Then for any arbitrary morphism of schemes $f : T \to S$, the induced map $u_T: X_T\to T$ is a smooth $\AA^1$-contractible $T$-scheme, where we have denoted the scheme-theoretic pullback by $X_T:= X\times_S T$.
\end{prop}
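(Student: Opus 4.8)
The plan is to deduce the statement directly from the base change functoriality recorded in \cref{cor:pushpull-A1-weak}. First I would note that the purely scheme-theoretic half is immediate: smoothness and finite presentation are stable under arbitrary base change, so $u_T\colon X_T\to T$ is again a smooth morphism of finite presentation, where $X_T = X\times_S T$. Thus the only content is the assertion that $u_T$ is an $\AA^1$-weak equivalence in $\Spc_T$.

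For this, observe that the hypothesis that $u\colon X\to S$ is $\AA^1$-contractible means precisely that $u$, regarded as a morphism of representable motivic spaces between $X$ and the terminal object $S$ of $\Sm_S$, is an $\AA^1$-weak equivalence in $\Spc_S$. Since $f\colon T\to S$ is an arbitrary morphism of base schemes, I apply part~(1) of \cref{cor:pushpull-A1-weak} to $u\colon X\to S$: it yields that $u_T\colon X_T\to S_T$ is an $\AA^1$-weak equivalence in $\Spc_T$. Finally I identify $S_T = S\times_S T$ with $T$ canonically, under which $u_T$ is exactly the structure morphism $X_T\to T$; hence $X_T$ is $\AA^1$-contractible over $T$, which is the claim.

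Unwinding \cref{cor:pushpull-A1-weak} one step further, the actual input is that $f^*\colon \Spc_S\to\Spc_T$ is a left Quillen functor for the $\AA^1$-local projective model structures (the $(f^*,f_*)$ half of the adjunctions in the preceding Proposition), that it sends the representable presheaf of a smooth $S$-scheme $X$ to that of $X_T$ via the formula \eqref{pshv-pullback}, and that representables are cofibrant; Brown's lemma (\cref{ken-brown-lemma}) then forces $f^*$ to preserve $\AA^1$-weak equivalences between cofibrant objects, carrying $u$ to $u_T$. Because all of these ingredients are already in hand, there is no genuine obstacle; the one point requiring care is that $f$ is not assumed smooth, so one must invoke the $(f^*,f_*)$ branch of \cref{cor:pushpull-A1-weak} rather than the $(f_\sharp,f^\bigstar)$ branch, the latter of which would need smoothness of $f$. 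It is also worth remarking that $u_T$ is then in particular an isomorphism in $\HH(T)$, so $X_T$ is contractible in the strongest sense.
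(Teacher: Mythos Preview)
Your proof is correct and follows exactly the paper's own approach: the paper simply states that this is a straightforward application of \cref{cor:pushpull-A1-weak}(1), and your argument spells out precisely that application together with the obvious remark on stability of smoothness under base change.
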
 
\begin{proof}
This is a straightforward application of \Cref{cor:pushpull-A1-weak} (1).
\end{proof}

\begin{corollary}\label{basechange:imperfect} \label{cor:A1-cont-fibers} \label{basechange:closedimm}
The \cref{pullbackfunctor} has several interesting implications.
\begin{enumerate}
\item Let $X$ be a smooth $\AA^1$-contractible scheme over a field $k$. Then for every field extension $k\subset L$ (separable or not), $X_L:=X\times_{\Spec k} \Spec L$ is an $\AA^1$-contractible $L$-scheme.
\item Let $f: X \to S$ be a smooth $\AA^1$-contractible $S$-scheme. Then for every point $s$ of $S$ (closed or not) with residue field $\kappa(s)$, the scheme theoretic fiber $X_s:=X\times_{S} \Spec \kappa(s)$ of $f$ over the point $s$ is a smooth $\mathbb{A}^1$-contractible $\kappa(s)$-scheme.
\item Let $i: Z \hookrightarrow S$ be a closed immersion of schemes. If $f:X\to S$ is an $\AA^1$-contractible $S$-scheme, then $\mathrm{pr}_2: X\times_S Z \to Z$ is an $\AA^1$-contractible $Z$-scheme.
\end{enumerate}
\end{corollary}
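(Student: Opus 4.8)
The plan is to deduce all three items by a single application of \Cref{pullbackfunctor} to an appropriately chosen base change morphism $f\colon T\to S$, using in addition only the elementary fact that smoothness of a morphism is stable under arbitrary base change.

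For item (1), I would take $S=\Spec k$ and let $f\colon \Spec L\to\Spec k$ be the morphism of schemes induced by the field extension $L/k$; then the scheme-theoretic pullback is $X_T=X\times_{\Spec k}\Spec L=X_L$, so \Cref{pullbackfunctor} gives at once that $X_L\to\Spec L$ is a smooth $\AA^1$-contractible $L$-scheme. The point I want to stress is that \emph{no} separability or algebraicity hypothesis on $L/k$ is needed, because the base change statement \Cref{cor:pushpull-A1-weak}(1) underlying \Cref{pullbackfunctor} holds for an arbitrary morphism of base schemes; it is exactly this generality that later permits descent to possibly imperfect residue fields.

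For item (2), I would take $f\colon \Spec\kappa(s)\to S$ to be the canonical morphism attached to a point $s\in S$ (closed or not); then $X_T=X\times_S\Spec\kappa(s)=X_s$ is precisely the scheme-theoretic fibre of $f$ over $s$, and \Cref{pullbackfunctor} says it is smooth and $\AA^1$-contractible over $\kappa(s)$. For item (3), I would take $f=i\colon Z\hookrightarrow S$; then $X_T=X\times_S Z$, the map $\pr_2\colon X\times_S Z\to Z$ is the base change of $f\colon X\to S$ along $i$, its smoothness follows by base change, and its $\AA^1$-contractibility is \Cref{pullbackfunctor}.

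I do not anticipate any genuine obstacle: the entire content is already isolated in \Cref{cor:pushpull-A1-weak}(1)---namely that $Lf^{*}$ carries $\AA^1$-weak equivalences between (cofibrant) smooth schemes to $\AA^1$-weak equivalences, via Ken Brown's lemma (\cref{ken-brown-lemma}) applied to the left Quillen functor $f^{*}$---so the only thing to spell out in each item is the identification of the relevant pullback with $X_L$, $X_s$, respectively $X\times_S Z$, which is immediate from the formula \eqref{pshv-pullback} describing $f^{*}$ on representables.
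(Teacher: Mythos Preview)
Your proposal is correct and matches the paper's approach exactly: the paper gives no explicit proof of this corollary at all, treating each item as an immediate specialization of \Cref{pullbackfunctor} to the indicated base change morphism, which is precisely what you spell out.
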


\subsection{Relative $\AA^1$-Contractibility of Smooth Schemes}\label{intro:rel-A1-contr} \label{subsec:Base-change}
A morphism of $S$-schemes $\phi: X \to Y$ is \emph{relatively $\AA^1$-weak equivalence} if $\phi$ is an $\AA^1$-weak equivalence in $\Spc_Y$. The following portrays that relative $\AA^1$-weak equivalences behave well under \emph{smooth} base change.

\begin{corollary}\label{lem:3-from-2}
Let $f: Y\to X$ and $g:Z\to Y$ be smooth morphisms between smooth schemes over a scheme $S$. If $g:Z\to Y$ is an $\AA^1$-weak equivalence in $\Spc_Y$ and $f:Y\to X$ is an $\AA^1$-weak equivalence in $\Spc_X$, then $h= f\circ g:Z\to X$ is an $\AA^1$-weak equivalence in $\Spc_X$.
\end{corollary}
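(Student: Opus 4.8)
The plan is to deduce this from \cref{cor:pushpull-A1-weak}(2) together with the two-out-of-three property of weak equivalences in a model category. First I would set up the bookkeeping of base schemes. Since $f\colon Y\to X$ and $g\colon Z\to Y$ are smooth morphisms of smooth $S$-schemes, the composites $Y\to X$ and $Z\to Y\to X$ exhibit $Y$ and $Z$ as objects of $\Sm_X$, and $g$ is then a morphism in $\Sm_X$; similarly $Z$ (via $g$) and $Y$ (via $\id_Y$) are objects of $\Sm_Y$. So every object and morphism in sight lands in the relevant categories of smooth schemes, and the hypotheses say precisely that $g$ is an $\AA^1$-weak equivalence in $\Spc_Y$ and $f$ is an $\AA^1$-weak equivalence in $\Spc_X$.

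The first real step is to transport $g$ from $\Spc_Y$ to $\Spc_X$. Applying \cref{cor:pushpull-A1-weak}(2) with the smooth morphism $f\colon Y\to X$ in the role of ``$T\to S$'', the morphism $g\colon Z\to Y$ between smooth $Y$-schemes, being an $\AA^1$-weak equivalence in $\Spc_Y$, is also an $\AA^1$-weak equivalence in $\Spc_X$ once $Z$ and $Y$ are regarded as smooth $X$-schemes via composition with $f$. Equivalently, $g$ becomes an isomorphism in $\HH(X)$.

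The second step is two-out-of-three. We now have two $\AA^1$-weak equivalences in $\Spc_X$: the morphism $g\colon Z\to Y$ just obtained, and the morphism $f\colon Y\to X$ which is an $\AA^1$-weak equivalence in $\Spc_X$ by hypothesis. Since the $\AA^1$-weak equivalences in $\Spc_X$ are exactly the weak equivalences of the $\AA^1$-local projective model structure (equivalently the isomorphisms in $\HH(X)$), and weak equivalences in any model category satisfy the two-out-of-three axiom, it follows that the composite $h=f\circ g\colon Z\to X$ is an $\AA^1$-weak equivalence in $\Spc_X$.

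I do not expect a genuine obstacle here: the content is entirely formal once \cref{cor:pushpull-A1-weak}(2) is in hand. The only point requiring care is the role of the smoothness of $f$ — it is exactly what licenses the application of \cref{cor:pushpull-A1-weak}(2), allowing $g$ to be pushed forward along $f$ — and the verification that, after composing with $f$, the map $g$ is indeed a morphism between smooth $X$-schemes, so that the pushforward statement applies verbatim.
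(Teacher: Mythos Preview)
Your proposal is correct and follows essentially the same approach as the paper: apply \cref{cor:pushpull-A1-weak}(2) to the smooth morphism $f\colon Y\to X$ to conclude that $g$ is an $\AA^1$-weak equivalence in $\Spc_X$, then invoke the two-out-of-three axiom to deduce that $h=f\circ g$ is as well. The paper's proof is simply a one-line version of what you wrote.
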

\begin{proof} Since $f:Y\to X$ is a smooth morphism, \Cref{cor:pushpull-A1-weak} (2) implies that $g:Z \to Y$ is an $\AA^1$-weak equivalence in $\Spc_X$ and the assertion then follows from the "two out of three" axiom for weak equivalence in a model category. 
\end{proof}

In the setting of \Cref{lem:3-from-2}, the implication "\emph{$f$ and $g$ are $\AA^1$-weak equivalence in $\Spc_X$ $\Rightarrow$ $g$ is an $\AA^1$-weak equivalence in $\Spc_Y$}" does not hold in general (see \Cref{ex:A1-cont-not-factorize}). Put differently, being "relatively" $\AA^1$-contractible is a subtle notion, and the need for such a distinction in a general setting is demonstrated by the following example. Observe also that this example is a non-pathological and, in fact, a fundamental instance.

\begin{example}\label{ex:A1-cont-not-factorize} Consider the smooth morphism of $k$-schemes 
\begin{equation}
\begin{split}
   & g: Z:= \mathbb{A}^2_k\to \mathbb{A}^1_k =: Y \\
   & \hspace{10mm} (x,y) \mapsto x^2 y^2 + y 
\end{split}
   \hspace{12mm}
   \begin{tikzcd}
	{\AA^2_k} && {\AA^1_k} \\
	& {\Spec k}
	\arrow["g", from=1-1, to=1-3]
	\arrow[from=1-1, to=2-2]
	\arrow[from=1-3, to=2-2]
  \end{tikzcd}
\end{equation}
We have that both $Y$ and $Z$ is $\AA^1$-contractible over $X$. Therefore, $g$ is an $\mathbb{A}^1$-weak equivalence in $\Spc_k$. But $g$ cannot be an $\AA^1$-weak equivalence in $\Spc_{\AA^1_k}$. Suppose it was so, then by  \Cref{cor:A1-cont-fibers}, every fiber of $f$ over a point of $\mathbb{A}^1_k$ would be an $\mathbb{A}^1$-contractible scheme over the corresponding residue field. But observe that the fiber of $f$ over $0$ is the disjoint union of a copy of $\mathbb{A}^1_k$ and a copy of $\GG_{m,k}$. The latter component is not even $\mathbb{A}^1$-connected (as $\GG_m$ is $\AA^1$-rigid) in $\Spc_k$ and so the fiber cannot be $\AA^1$-connected, whence cannot be $\AA^1$-contractible. Therefore, we conclude that $g$ cannot be a relative $\AA^1$-weak equivalence.
\end{example}

\begin{remark}\label{expliciteg:-A1-contractible}
If $u: X\to S$ is a smooth $\AA^1$-contractible scheme in $\Spc_S$, then its base change with respect to an arbitrary morphism $f: T\to S$ is recovered as that of the (usual) scheme-theoretic pullback. As a consequence, this implies crucially that the following diagram 
\[\begin{tikzcd}
	{\Sm_S} &&& {\Sm_T} \\
	{\Spc_S} &&& {\Spc_T} \\
	{\HH(S)} &&& {\HH(T)}
	\arrow["{f^{\bullet}}", from=1-1, to=1-4]
	\arrow[hook,"Y", from=1-1, to=2-1]
	\arrow[hook,"Y", from=1-4, to=2-4]
	\arrow["{f^*}", from=2-1, to=2-4]
	\arrow["{\L_{mot,S}}"', from=2-1, to=3-1]
	\arrow["{\L_{mot,T}}", from=2-4, to=3-4]
	\arrow["{Lf^*}"', from=3-1, to=3-4]
\end{tikzcd}\]
commutes. In other words, for a smooth $\AA^1$-contractible scheme $u: X\to S$, we have that
         $$Lf^*(\L_{mot,S}(X)) \simeq \L_{mot,T}(f^*(X)).$$
\end{remark}

Retreating to \Cref{cor:pushpull-A1-weak} (1), one is intrigued to ask the following question: 
\begin{question}\label{fiber:pullback}
For a smooth morphism of schemes $f:X \to S$, does the property that all fibers of $f$ over points of $s$ are $\AA^1$-contractible over the corresponding residue fields imply that $X$ is an $\AA^1$-contractible $S$-scheme?
\end{question}

Such a desired property (\cref{fiber:pullback}) holds true in the stable $\AA^1$-homotopy category $\mathrm{SH}(S)$ owing to the fact that the family of functors $i_s^{!}:\mathrm{SH}(S)\to \mathrm{SH}(\kappa(s))$, $s\in S$, is conservative, see e.g. \cite[B20]{DJK21}. However, as a consequence of the \emph{gluing theorem} \cite[Theorem 2.21]{MV99}, it also holds in the unstable $\AA^1$-homotopy category.

\begin{prop}\label{fiberwise=relative} \label{pointwise:phenomenon}
For a scheme $f: X \to S$ smooth over a Noetherian scheme $S$ of finite Krull dimension, the following properties are equivalent:
\begin{enumerate}
\item The morphism $f:X\to S$ is an $\AA^1$-weak-equivalence in $\Spc_S$.
\item For every point $s$ of $S$ with residue field $\kappa(s)$, the scheme theoretic fiber of $f$ over $s$ is a smooth $\mathbb{A}^1$-contractible $\kappa(s)$-scheme.
\end{enumerate}
\end{prop}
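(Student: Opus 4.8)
The plan is to prove the equivalence $(1)\Leftrightarrow(2)$ by Noetherian induction on the dimension of $S$, using the gluing theorem \cite[Theorem 2.21]{MV99} as the essential input for the nontrivial direction $(2)\Rightarrow(1)$. The implication $(1)\Rightarrow(2)$ is immediate: it is precisely \Cref{cor:A1-cont-fibers} (2), since each fiber $X_s = X\times_S\Spec\kappa(s)$ is the pullback of $X\to S$ along the (non-smooth, but arbitrary) morphism $\Spec\kappa(s)\to S$, and pullback of an $\AA^1$-weak equivalence along an arbitrary morphism is again an $\AA^1$-weak equivalence by \Cref{cor:pushpull-A1-weak} (1). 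So the content is entirely in $(2)\Rightarrow(1)$.

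For $(2)\Rightarrow(1)$, first I would reduce to the case where $S$ is reduced (the category $\Sm_S$ and the motivic homotopy category only depend on $S_{\mathrm{red}}$, and hypothesis $(2)$ is unchanged since $\kappa(s)$ is unchanged). Next, pick a generic point $\eta$ of an irreducible component of $S$; by hypothesis $X_\eta\to\Spec\kappa(\eta)$ is an $\AA^1$-weak equivalence, and by a standard spreading-out argument (the $\AA^1$-weak equivalence is detected by finitely much data — a Nisnevich-local section and an $\AA^1$-homotopy, all defined over a finitely generated subring of $\kappa(\eta)$) there is a dense open $U\subseteq S$ such that $X_U\to U$ is an $\AA^1$-weak equivalence in $\Spc_U$. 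Let $Z = S\setminus U$ with its reduced structure, a closed subscheme of strictly smaller dimension (or a proper closed subscheme of each component), and let $i\colon Z\hookrightarrow S$, $j\colon U\hookrightarrow S$ be the inclusions. The fibers of $X\times_S Z\to Z$ over points of $Z$ are among the fibers of $X\to S$, hence $\AA^1$-contractible by $(2)$; by the induction hypothesis $X_Z\to Z$ is an $\AA^1$-weak equivalence in $\Spc_Z$.

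Now I would invoke the gluing theorem: for $f\colon X\to S$ smooth, the square relating $j_\sharp j^\bigstar$, $\id$, and $i_* i^\bigstar$ (equivalently, the recollement $(j_\sharp, j^\bigstar, j_*)$, $(i^*, i_*, i^!)$ on $\HH(S)$) exhibits the motivic homotopy type of $X$ as glued from $X_U\simeq U$ and $X_Z\simeq Z$ along the homotopy type of $X\times_S(U\hookrightarrow S\hookleftarrow Z)$; concretely, one checks that the comparison map from the pushout (or total homotopy fiber) built from $X_U$, $X_Z$ and the relevant "link" agrees under $f$ with the corresponding gluing datum for $S = U\cup Z$, and since both $X_U\to U$ and $X_Z\to Z$ are $\AA^1$-weak equivalences, so is $X\to S$. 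The main obstacle — and the step requiring genuine care rather than formal manipulation — is verifying that the gluing theorem of Morel–Voevodsky applies to the non-proper, non-quasi-projective generality of an arbitrary smooth $f\colon X\to S$ over a finite-dimensional Noetherian base, and that the spreading-out step is compatible with \emph{relative} $\AA^1$-weak equivalence (i.e. that the open $U$ can be taken so that $X_U\to U$, not merely $X_U\to S$, is an equivalence in $\Spc_U$); the subtlety flagged in \Cref{ex:A1-cont-not-factorize} shows this relative refinement is not automatic, and one must feed the fiberwise hypothesis back in at each inductive stage rather than only at the generic point.
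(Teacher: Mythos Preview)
The paper does not give its own argument: it defers to \cite[Proposition 2.1]{realetale2025}. Your outline --- Noetherian induction on closed subschemes of $S$, with the gluing cofiber sequence $j_\sharp j^*\mathcal{F}\to\mathcal{F}\to i_*i^*\mathcal{F}$ of \cite[\S 3, Theorem 2.21]{MV99} supplying the inductive step --- is the standard route and almost certainly what the cited reference does. Two of your stated worries are non-issues. First, the gluing theorem is a statement about decomposing \emph{any} object of $\Spc_S$ along a closed--open pair $(Z,U)$ on the base; it imposes no hypothesis on $\mathcal{F}$, so a representable smooth $X\to S$ is covered automatically. Second, \Cref{ex:A1-cont-not-factorize} is about \emph{descending} an equivalence to a finer base, whereas your induction only pushes equivalences forward along $j_\sharp$ (left Quillen) and $Ri_*$, both of which preserve $\AA^1$-weak equivalences.

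The genuine gap is the spreading-out step. Passing from $\AA^1$-contractibility of $X_\eta$ over $\kappa(\eta)$ to that of $X_U$ over some open $U\ni\eta$ is \emph{not} ordinary finite-presentation spreading out: an $\AA^1$-weak equivalence is only visible after the transfinite localization $\L_{\mathrm{mot}}$, not after ``a Nisnevich-local section and an $\AA^1$-homotopy'' as you suggest. What is actually needed is \emph{continuity} of unstable motivic homotopy theory --- that for a cofiltered system of qcqs schemes $(S_\alpha)$ with affine transition maps, pullback induces an equivalence $\colim_\alpha\HH(S_\alpha)\simeq\HH(\lim_\alpha S_\alpha)$ --- applied to the pro-system of affine open neighbourhoods of $\eta$ with limit $\Spec\kappa(\eta)$, together with compactness of the smooth $S$-scheme $X$ in $\HH(S)$. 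Once that input is supplied, your argument goes through.
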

\begin{proof} 
See \cite[Proposition 2.1]{realetale2025}.
\end{proof}

\subsection{Milnor-Witt $K$-Theory}
To prove the $\AA^1$-contractibility of Koras-Russell threefolds over an arbitrary field, we will need a description of the endomorphism ring of the motivic spheres $\AA^n\bs \{0\}$ that is intertwined with the Milnor-Witt $K$-theory. The Milnor Witt $K$-theory is essentially an extension of Milnor's $K$-theory, which cleverly mixes the generators and relations of Milnor's $K$-theory and Grothendieck Witt ring. Interested readers can refer to  \cite{morel2012A1topology, fasel2020Chow-Witt, deglise2023notes, bachmann2025MWmotives} for an extensive background.

\begin{defn}
Let $F$ be any arbitrary field and $F^{\times}$ be its units. The \emph{Milnor-Witt $K$-theory} or \emph{Milnor-Witt ring}, denoted as $K^{\MW}_*(F)$, is defined as the $\ZZ$-graded associative unital ring freely generated by the formal symbols $[a]$ of degree +1, for $a\in F^{\times}$ and a symbol $\eta$ of degree -1 (called the \emph{Hopf element}) subjected to the following relations:

\begin{enumerate}\label{MW:relations}
\item $[a][1-a] = 0$, for any $a\ne 0,1$ (Steinberg relations)
\item $[ab] = [a]+[b]+\eta[a][b]$, for any $a,b\in F^{\times}$
\item $\eta[a]=[a]\eta$, for any $a\in F^{\times}$
\item $\eta(\eta[-1]+2) = 0$
\end{enumerate}
\end{defn}

It is related to the Milnor $K$-theory \cite{milnor1970algebraic} by killing $\eta$, that is, $K^{\MW}_*/(\eta) = K_*^{\M}(F)$. It is related to the Grothendieck-Witt ring of symmetric bilinear forms by killing the hyperbolic form $h:=\Lin -1,1\Rin $, that is, $K^{\MW}_{*}{F}/(h) = \GW(F)$. There are various standard relations that one immediately derives by working with the generator and the relations. One such important element is the $\epsilon$ in degree 0 described as follows:
\begin{lemma}\label{MW:epsilon-defn}
Let $\epsilon:= -\Lin -1 \Rin \in K^{\MW}_0(F)$. For any $n\in \ZZ$, let 
\[ n_{\epsilon} = 
\begin{cases}
\sum_{i=1}^{n} \Lin (-1)^{i-1} \Rin           & \text{if}\quad n> 0.  \\
0                                               & \text{if}\quad n = 0.  \\
\epsilon \sum_{i=1}^{-n} \Lin (-1)^{i-1} \Rin   & \text{if}\quad n<0
\end{cases} 
\]
Then the following holds.
\begin{enumerate}
\item For any $\alpha\in K^{\MW}_n(F)$ and $\beta\in K^{\MW}_m(F)$, we have $\alpha\beta = \epsilon ^{mn}\beta\alpha$
\item For any $n\in \ZZ$ and $a\in F^\times$, we have $[a^n] = n_{\epsilon}[a]$.
\end{enumerate}
\end{lemma}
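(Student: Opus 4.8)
The plan is to deduce both statements directly from the four defining relations, essentially reproducing the computations in \cite[Ch.~3]{morel2012A1topology}. First set up the standard bookkeeping: writing $\Lin u\Rin := 1 + \eta[u]$ for $u\in F^{\times}$, relation~(2) reads $[ab] = [a] + \Lin a\Rin[b]$, whence also $\eta[a][b] = \eta[b][a]$ and $[ab] = [b] + \Lin b\Rin[a]$ since $ab = ba$, and a short manipulation with relation~(3) gives $\Lin u\Rin\Lin v\Rin = \Lin uv\Rin$. Since $[1] = 0$ one has $\Lin 1\Rin = 1$, and via the identification $K^{\MW}_{0}(F)\cong\GW(F)$ the degree-zero part is a commutative subring that is central in $K^{\MW}_{*}(F)$ and in which each $\Lin u\Rin$ is a unit with $\Lin u\Rin^{2} = \Lin u^{2}\Rin = 1$; in particular $\epsilon = -\Lin -1\Rin$ is central with $\epsilon^{2} = \Lin 1\Rin = 1$, so $\epsilon^{mn}$ depends only on the parity of $mn$. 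Finally relation~(4) rewrites as $\eta^{2}[-1] = -2\eta$, so $\eta h = 0$ (hence also $\epsilon\eta = \eta$ and $h\epsilon = -h$) for $h := \Lin 1\Rin + \Lin -1\Rin$.

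The one genuine computation is the symbol identity $[a][-a] = 0$ for all $a\in F^{\times}$, from which everything else follows. For $a\ne 1$ use $-a = (1-a)(1-a^{-1})^{-1}$: expanding $[-a]$ by relation~(2) and the formula $[x^{-1}] = -\Lin x\Rin[x]$ gives $[-a] = [1-a] - \Lin 1-a\Rin\Lin 1-a^{-1}\Rin[1-a^{-1}]$; multiplying on the left by $[a]$, the term $[a][1-a]$ vanishes by the Steinberg relation, while $[a][1-a^{-1}]$ vanishes because $0 = [a^{-1}][1-a^{-1}] = -\Lin a\Rin[a][1-a^{-1}]$ and $\Lin a\Rin$ is invertible; the case $a = 1$ is trivial. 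Expanding $0 = [a][-a] = [a]([-1] + \Lin -1\Rin[a]) = [a][-1] + \Lin -1\Rin[a]^{2}$ then yields the pivotal identity $[a][-1] = \epsilon[a]^{2}$, equivalently $[a]^{2} = \epsilon[a][-1]$.

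For part~(1): $K^{\MW}_{*}(F)$ is generated as a ring by the degree-$1$ symbols $[a]$ and the degree-$(-1)$ element $\eta$, so by multilinearity and induction on word length it suffices to verify the commutation law on pairs of generators. When one factor lies in (central) degree $0$ it is automatic; for $\alpha = [a]$, $\beta = \eta$ relation~(3) gives $[a]\eta = \eta[a]$, and the required $[a]\eta = \epsilon\,\eta[a]$ is equivalent to $(1-\epsilon)\eta[a] = h\,\eta[a] = 0$, true since $\eta h = 0$; likewise $\eta^{2} = \epsilon\,\eta^{2}$ reduces to $h\eta^{2} = 0$. The substantial case $[a][b] = \epsilon[b][a]$ I would obtain by computing $[ab]^{2}$ two ways: on one hand $[ab]^{2} = \epsilon[ab][-1] = [a]^{2} + \Lin a\Rin[b]^{2}$ using $[x]^{2} = \epsilon[x][-1]$, on the other $[ab]^{2} = ([a] + \Lin a\Rin[b])([b] + \Lin b\Rin[a]) = [a]^{2} + \Lin a\Rin[a][b] + \Lin a\Rin[b]^{2} + \Lin ab\Rin[b][a]$ using centrality of degree $0$. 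Comparing gives $\Lin a\Rin[a][b] + \Lin ab\Rin[b][a] = 0$; multiplying by $\Lin a\Rin$ and using $\Lin a^{2}\Rin = 1$ gives $[a][b] = -\Lin b\Rin[b][a]$, and finally $\Lin b\Rin[b][a] = \Lin -1\Rin[b][a]$ because their difference equals $\Lin -1\Rin\eta[-b]\,[b][a] = \Lin -1\Rin(\eta[b][-b])[a] = 0$; hence $[a][b] = \epsilon[b][a]$.

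For part~(2), fix $a\in F^{\times}$ and induct on $|n|$; the cases $n = 0, 1$ are immediate from $1_{\epsilon} = \Lin 1\Rin = 1$. For $n\ge 1$, relation~(2) and induction give $[a^{n+1}] = [a] + \Lin a\Rin[a^{n}] = [a] + \Lin a\Rin n_{\epsilon}[a]$; writing $n_{\epsilon}$ as $\tfrac{n}{2}h$ (for $n$ even) or $\tfrac{n-1}{2}h + 1$ (for $n$ odd), commuting the central factor $n_{\epsilon}$, and repeatedly applying $\eta h = 0$, $\epsilon\eta = \eta$ and $[a]^{2} = \epsilon[a][-1]$ collapses $\Lin a\Rin n_{\epsilon}[a]$ to $(n_{\epsilon} - 1 + \Lin (-1)^{n}\Rin)[a] = ((n+1)_{\epsilon} - 1)[a]$, i.e.\ $[a^{n+1}] = (n+1)_{\epsilon}[a]$. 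For $n\le -1$ one first checks $[a^{-1}] = -\Lin a\Rin[a] = \epsilon[a] = (-1)_{\epsilon}[a]$ by the same difference-of-units argument, and then applies the positive case to $a^{-1}$ with exponent $-n$: $[a^{n}] = (-n)_{\epsilon}[a^{-1}] = (-n)_{\epsilon}\,\epsilon[a] = n_{\epsilon}[a]$, using centrality of $\epsilon$ and $n_{\epsilon} = \epsilon\,(-n)_{\epsilon}$. The main obstacle throughout is really just this single symbol-level computation behind $[a][-a] = 0$ and $[a][b] = \epsilon[b][a]$; once it is in place, the reduction to generators and both inductions are mechanical bookkeeping governed by $\eta h = 0$ and centrality of degree $0$, so that in practice one simply cites \cite[Ch.~3]{morel2012A1topology}.
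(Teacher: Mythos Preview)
The paper does not actually prove this lemma: it is stated without proof, as a standard fact about Milnor--Witt $K$-theory, with the general references to \cite{morel2012A1topology, fasel2020Chow-Witt, deglise2023notes, bachmann2025MWmotives} given at the start of the subsection serving in lieu of an argument. Your proposal therefore goes well beyond what the paper does, and your closing remark that ``in practice one simply cites \cite[Ch.~3]{morel2012A1topology}'' is precisely the paper's approach.

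Your sketch is essentially the standard derivation from Morel's book and is sound in outline. One small bookkeeping slip: your expansion of $([a] + \Lin a\Rin[b])([b] + \Lin b\Rin[a])$ is not quite right; multiplying out and using centrality of degree~$0$ gives
\[
[a][b] + \Lin b\Rin[a]^{2} + \Lin a\Rin[b]^{2} + \Lin ab\Rin[b][a],
\]
not the expression you wrote. Comparing with $[a]^{2} + \Lin a\Rin[b]^{2}$ then leaves an extra $(1 - \Lin b\Rin)[a]^{2} = -\eta[b][a]^{2}$ term to absorb, so the route to $[a][b] = \epsilon[b][a]$ via this particular comparison needs a bit more work than you indicate (or one uses instead the more direct argument via $[a]^{2} = [-1][a]$ obtained from $a = (-1)(-a)$ together with $[a][-1] = \epsilon[a]^{2}$). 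This is a minor point of execution, not of strategy; the identity $[a][-a] = 0$ and the reduction-to-generators scheme you outline are exactly what is needed, and once the degree-$1$ case is in hand both parts follow as you describe.
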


\subsection*{Residue homomorphism:} One of the fundamental tools to build a Gersten-type (cochain) complex with coefficients in the Milnor-Witt $K$-theory is the \emph{residue homomorphism}. Let $F$ be a field and $v: F \to \ZZ\cup\{-\infty\}$ be a discrete valuation with residue field $\kappa(v)$ and valuation ring $\mathcal{O}_v$ and choose a uniformizing parameter $\pi = \pi_v$ of $v$.
\begin{theorem}
There is a unique homomorphism of graded Abelian groups 
        $$\partial^{\pi}_v : K^{\MW}_*(F)\to K^{\MW}_{*-1}(\kappa(v))$$
of degree -1 such that $\partial_v^\pi$ commutes wit the multiplication by $\eta$ and 
\begin{enumerate}
\item $\partial_v^\pi([\pi, u_1,\dots,u_n]) = [\Bar{u_1},\dots,\Bar{u_n}]$
\item $\partial_v^\pi([u_1,\dots,u_n])=0$
\end{enumerate}
\end{theorem}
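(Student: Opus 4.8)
The plan is to run the classical Milnor-type argument adapted to the Milnor--Witt setting (this is, in substance, Morel's theorem, cf.\ \cite{morel2012A1topology}), treating uniqueness and existence separately.

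\textbf{Uniqueness.} I would first record that $K^{\MW}_{*}(F)$ is generated, as an abelian group, by the elements $\eta^{j}[u_1,\dots,u_p]$ and $\eta^{j}[\pi,u_1,\dots,u_q]$ with all $u_i\in\mathcal O_v^{\times}$. This follows from the presentation: writing $a\in F^{\times}$ as $\pi^{v(a)}u_a$ with $u_a$ a unit, relation~(2) expands $[a]$ in terms of $[\pi]$, $[u_a]$ and $\eta$; the identity $[\pi^{n}]=n_{\epsilon}[\pi]$ of \Cref{MW:epsilon-defn} together with $\Lin -1\Rin=1+\eta[-1]$ keeps every monomial inside the $\ZZ[\eta]$-span of products of $[\pi]$'s and symbols of units; and the standard identity $[\pi]^{2}=\epsilon\,[\pi][-1]$ (deduced from $[\pi][-\pi]=0$ and $\eta(\eta[-1]+2)=0$, with $-1$ a unit) collapses any such product to one containing at most a single factor $[\pi]$. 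On this generating set the three requirements — that $\partial^{\pi}_{v}$ commute with $\eta$ and take the prescribed values on $[\pi,u_1,\dots,u_n]$ and on $[u_1,\dots,u_n]$ — determine $\partial^{\pi}_{v}$ completely, so any two solutions agree.

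\textbf{Existence.} Rather than defining $\partial^{\pi}_{v}$ by hand, I would produce a graded ring homomorphism
\[
s^{\pi}_{v}\colon K^{\MW}_{*}(F)\longrightarrow K^{\MW}_{*}(\kappa(v))[\xi],
\]
where $K^{\MW}_{*}(\kappa(v))[\xi]:=K^{\MW}_{*}(\kappa(v))\oplus K^{\MW}_{*-1}(\kappa(v))\,\xi$ is the graded ring obtained by adjoining a degree-$1$ element $\xi$ that is $\eta$-central and satisfies $\xi^{2}=[-1]\,\xi$ (the relation forced by $[\pi]^{2}=\epsilon[\pi][-1]$), equipped with the $\epsilon$-graded-commutative product. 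One sets $\eta\mapsto\eta$ and, for $a=\pi^{v(a)}u_a$ with $u_a\in\mathcal O_v^{\times}$,
\[
[a]\longmapsto (v(a))_{\epsilon}\,\xi+[\bar u_a]+\eta\,(v(a))_{\epsilon}\,\xi\,[\bar u_a],
\]
which is the only assignment compatible with $[\pi]\mapsto\xi$, $[u]\mapsto[\bar u]$ and relation~(2). The task is then to check that this kills the four defining relations: (3) and~(4) are immediate, since $\eta$ and $[-1]$ land in $K^{\MW}_{*}(\kappa(v))$ and $\xi$ is $\eta$-central; (2) is a direct computation with the identities for $n_{\epsilon}$ and with $\xi^{2}=[-1]\xi$; and the Steinberg relation $[a][1-a]=0$ is handled by a case analysis on $(v(a),v(1-a))$. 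If $v(a),v(1-a)\ge 0$, then either both are $0$, in which case $\overline{1-a}=1-\bar a$ reduces the claim to Steinberg in $\kappa(v)$, or one of the reductions is $0$ or $1$ and the corresponding symbol vanishes; the remaining possibility $v(a)=v(1-a)<0$ is the delicate one, where both images carry a $\xi$-summand and the product only collapses after invoking $\eta(\eta[-1]+2)=0$, $\Lin -1\Rin^{2}=1$, $\epsilon^{2}=1$ and $[x][-x]=0$ in $K^{\MW}_{*}(\kappa(v))$.

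\textbf{Conclusion and the main obstacle.} Once $s^{\pi}_{v}$ is known to be well defined, I would set $\partial^{\pi}_{v}$ to be the composition of $s^{\pi}_{v}$ with the projection $K^{\MW}_{*}(\kappa(v))[\xi]\to K^{\MW}_{*-1}(\kappa(v))$ onto the $\xi$-coefficient: it is additive of degree $-1$ because $s^{\pi}_{v}$ is a graded ring map, it commutes with $\eta$ because $\xi$ does, and expanding $s^{\pi}_{v}$ on a product of generators shows at once that the $\xi$-coefficient of $s^{\pi}_{v}([\pi,u_1,\dots,u_n])$ is $[\bar u_1,\dots,\bar u_n]$ and that of $s^{\pi}_{v}([u_1,\dots,u_n])$ is $0$. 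The single genuinely non-formal step is the ``$v(a)=v(1-a)<0$'' sub-case of the Steinberg check: that is precisely where the characteristically Milnor--Witt relation $\eta(\eta[-1]+2)=0$ must enter, which is also why the naive assignment $[a]\mapsto v(a)\xi+[\bar u_a]$ (without the $\eta$-correction) fails and the twisted formula above is forced; everything else is bookkeeping. (An alternative I would keep in reserve is to assemble $\partial^{\pi}_{v}$ from Milnor's residue on $K^{\M}_{*}$ \cite{milnor1970algebraic} and the second residue on the powers $I^{n}$ of the fundamental ideal, via the pullback square $K^{\MW}_{n}\cong K^{\M}_{n}\times_{I^{n}/I^{n+1}}I^{n}$; but checking that the two residues are compatible costs about as much as the direct computation.)
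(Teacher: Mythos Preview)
The paper does not supply a proof of this theorem: it is stated as background, being Morel's construction of the residue map in Milnor--Witt $K$-theory (see \cite[Theorem~3.15]{morel2012A1topology}), and is invoked only as a tool later on. So there is no ``paper's own proof'' to compare against.

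Your sketch is, in outline, exactly Morel's argument: uniqueness from the observation that $K^{\MW}_*(F)$ is $\ZZ[\eta]$-generated by symbols with at most one $[\pi]$, and existence via a graded ring specialization $s^{\pi}_v$ into an auxiliary $\epsilon$-commutative extension $K^{\MW}_*(\kappa(v))[\xi]$, followed by projection onto the $\xi$-coefficient. The identification of the Steinberg check in the case $v(a)=v(1-a)<0$ as the non-formal step, and the role of $\eta h=0$ there, is accurate. One small point to tidy: the relation you impose on $\xi$ should match $[\pi]^2=[\pi][-1]+\eta[\pi]^2[-1]$ (equivalently $[\pi]^2=\epsilon[-1][\pi]$ after using $\langle -1\rangle^2=1$), so in the $\epsilon$-graded target one wants $\xi^2=\epsilon[-1]\xi$ rather than $\xi^2=[-1]\xi$; this does not affect the strategy but is needed for the bookkeeping in relation~(2) to close. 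Your alternative via the pullback square $K^{\MW}_n\cong K^{\M}_n\times_{I^n/I^{n+1}}I^n$ is also a legitimate route and is how some later references package the construction.
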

We call such a $\partial_v^\pi$ the \emph{residue homomorphism}. It turns out that the so-defined homomorphism depends on the chosen valuation $\pi$, that is, for an element $\alpha\in K^{\MW}_*(F)$ and $u\in \mathcal{O}_v^{\times}$, we have that 
        $$\partial_v^\pi(\Lin u \Rin \alpha) =  \Lin \Bar{u}\Rin \partial_v^\pi(\alpha).$$
This is the reason for \cite{morel2012A1topology} to introduce the theory of \emph{twisted Milnor Witt $K$-theory}. The twists are considered with respect to an invertible sheaf, equivalently, a line bundle\footnote{in fact, one uses the graded line bundles in more generality, see \cite[\S 1.3]{fasel2020Chow-Witt}}. Let $V$ be an $F$-vector space of rank 1 with $V^\times$ comprising the non-zero elements of $V$. The group $V^\times$ acts transitively on $F^\times$, and this gives the free Abelian group $\ZZ[V^\times]$ a structure of a $\ZZ[F^\times]$-algebra. On the other hand, we know that $K^{\MW}_*(F)$ is a $K^{\MW}_0(F)$-algebra and there is a canonical map $F^\times \to K^{\MW}_0(F)$ define as $u \mapsto
\Lin u \Rin\in K^{\MW}_0(F)$. Thus, all the groups $K^{\MW}_n(F)$ also inherits the structure of $\ZZ[F^\times]$-algebra. Now, we can conveniently set
the \emph{Milnor-Witt $K$-theory twisted by $V$} as        
            $$K^{\MW}_n(F,V) := K^{\MW}_n(F)\otimes_{\ZZ[{F^\times}]} \ZZ[V^\times]$$
With this definition, we can now consider the twisted residue homomorphism with respect to a line bundle $L$. The homomorphism
        $$\partial_v: K^{\MW}_*(F,L)\to K^{\MW}_{*-1}(\kappa(v), (\mathfrak{m}_v/\mathfrak{m}^2_v)^*\otimes L)$$
is defined by $\partial_v(\alpha \otimes l) = \partial_v^\pi(\alpha)\otimes \overline{\pi}^* \otimes l$, for $l\in L$. Here, $(\mathfrak{m}_v/\mathfrak{m}^2_v)^*$ represents the relative tangent space which, by definition, is the dual of the relative cotangent space $\mathfrak{m}_v/\mathfrak{m}^2_v$ (seen as $\kappa(v)$-vector spaces).
\begin{lemma}
The homomorphism $\partial_v$ is well-defined and is independent of the uniformizer $\pi$.
\end{lemma}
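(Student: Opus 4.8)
The plan is to reduce the statement to a single genuine computation — the dependence of $\pvpi$ on the chosen uniformizer — and then to observe that the twist by the line $\mathfrak{m}_v/\mathfrak{m}_v^2$ was introduced precisely to cancel that dependence. So the first and main step is the change-of-uniformizer formula: for every unit $u\in\mathcal{O}_v^{\times}$ one should have
$$\partial^{u\pi}_v \;=\; \Lin\bar u\Rin\circ\pvpi \colon K^{\MW}_*(F)\longrightarrow K^{\MW}_{*-1}(\kappa(v)).$$
I would prove this by uniqueness: put $\partial' := \Lin\bar u\Rin^{-1}\circ\partial^{u\pi}_v$ and check that $\partial'$ commutes with multiplication by $\eta$ and satisfies the two identities characterizing the residue homomorphism of the theorem above relative to $\pi$; the uniqueness clause then forces $\partial' = \pvpi$. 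Only the identity $\partial'([\pi,w_1,\dots,w_n]) = [\bar w_1,\dots,\bar w_n]$ needs computation: using relation (2) of the Milnor--Witt ring to write $[\pi] = [u^{-1}\cdot u\pi] = [u^{-1}] + [u\pi] + \eta[u^{-1}][u\pi]$, one expands $[\pi][w_1]\cdots[w_n]$ and applies $\partial^{u\pi}_v$ term by term; the unit-only terms vanish, the $[u\pi]$-term contributes $[\bar w_1,\dots,\bar w_n]$, and the $\eta$-term contributes $\epsilon\eta[\bar u^{-1}][\bar w_1,\dots,\bar w_n]$, which by $[\bar u^{-1}] = \epsilon[\bar u]$ and $\epsilon^2 = 1$ combines so that the total is $\Lin\bar u\Rin[\bar w_1,\dots,\bar w_n]$; dividing by $\Lin\bar u\Rin$ yields the claim.

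With this in hand, independence of $\pi$ is immediate. Replacing $\pi$ by $u\pi$ sends the basis $\bar\pi$ of $\mathfrak{m}_v/\mathfrak{m}_v^2$ to $\bar u\,\bar\pi$, hence the dual basis $\bar\pi^{*}$ to $\bar u^{-1}\bar\pi^{*}$; and by the definition of the twisted Milnor--Witt groups a degree-$0$ scalar passes across the twisting tensor factor as $\Lin a\Rin\gamma\otimes\ell = \gamma\otimes a\cdot\ell$ for $a\in\kappa(v)^{\times}$. Therefore
$$\partial^{u\pi}_v(\alpha)\otimes\overline{u\pi}^{*}\otimes l \;=\; \bigl(\Lin\bar u\Rin\pvpi(\alpha)\bigr)\otimes\bar u^{-1}\bar\pi^{*}\otimes l \;=\; \pvpi(\alpha)\otimes\bar\pi^{*}\otimes l,$$
so the formula defining $\partial_v$ does not depend on the chosen uniformizer.

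It remains to see that $(\alpha,l)\mapsto\pvpi(\alpha)\otimes\bar\pi^{*}\otimes l$ is balanced over $\ZZ[F^{\times}]$, i.e.\ is unchanged when $(\alpha,l)$ is replaced by $(\alpha\cdot\Lin a\Rin,\,a^{-1}\cdot l)$ for $a\in F^{\times}$. Writing $a = u\pi^{m}$ with $u\in\mathcal{O}_v^{\times}$ reduces this to the two cases $a = u$ and $a = \pi$. For $a = u$ it is exactly the identity $\pvpi(\Lin u\Rin\alpha) = \Lin\bar u\Rin\pvpi(\alpha)$ recorded above, together with the scalar transfer across $\otimes_{\ZZ[\kappa(v)^{\times}]}$ and the compatibility of the $\mathcal{O}_v^{\times}$-action on an $\mathcal{O}_v$-lattice $\Lambda\subset L$ with the $\kappa(v)^{\times}$-action on its reduction. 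For $a = \pi$ one uses the canonical isomorphism $\mathfrak{m}_v\Lambda/\mathfrak{m}_v^{2}\Lambda \cong (\mathfrak{m}_v/\mathfrak{m}_v^{2})\otimes_{\kappa(v)}(\Lambda/\mathfrak{m}_v\Lambda)$, which lets the factor $\bar\pi^{*}$ absorb the $\pi$ produced by passing from $l$ to $a^{-1}\cdot l$ — this is the whole point of twisting by $\mathfrak{m}_v/\mathfrak{m}_v^{2}$. I expect the change-of-uniformizer computation in the first step to be the only part that is not routine bookkeeping, since it is the single place where the non-additivity of the symbol $[\,\cdot\,]$ enters; everything else is formal manipulation of the module structures on the twisting factors. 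Alternatively, one may simply invoke \cite{morel2012A1topology} or \cite{fasel2020Chow-Witt}, where these verifications are carried out in full.
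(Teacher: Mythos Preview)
Your argument is correct and is essentially the standard verification one finds in the references you cite at the end. The paper itself gives no argument at all: its entire proof is the one-line citation ``See \cite[Lemma 1.19]{fasel2020Chow-Witt}.'' So your closing sentence, offering to simply invoke \cite{morel2012A1topology} or \cite{fasel2020Chow-Witt}, already matches the paper's approach exactly; the detailed sketch you give beforehand is a faithful unpacking of what is proved there (the change-of-uniformizer formula $\partial_v^{u\pi}=\Lin\bar u\Rin\circ\pvpi$ via uniqueness, followed by the observation that the twist by $(\mathfrak{m}_v/\mathfrak{m}_v^2)^*$ absorbs precisely this discrepancy).
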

\begin{proof}
See \cite[Lemma 1.19]{fasel2020Chow-Witt}.
\end{proof}
\subsection*{Motivic Brouwer degree:} 
We have yet another fundamental tool in the Milnor-Witt $K$-theory which gives the description of the non-trivial $\AA^1$-homotopy sheaf of the motivic sphere $\AA^n \bs \{0\}$, for $n\ge 2$. 

\begin{theorem}\label{thm:A1-Brouwer} (\cite[Theorem 6.40]{morel2012A1topology})
For $n\ge 2$, we have a canonical isomorphism of strictly $\AA^1$-invariant sheaves 
    $$\pi^{\AA^1}_{n-1}(\AA^n\bs\{0\}) \cong \pi_n^{\AA^1}((\PP^1)^{\wedge n}) \cong K^{\MW}_n $$
\end{theorem}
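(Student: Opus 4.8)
The plan is to establish the two isomorphisms separately, as they are of rather different natures. Both belong to the picture of the ``diagonal'' motivic spheres: from $\PP^1\simeq S^1_s\wedge\GG_m$ one gets $(\PP^1)^{\wedge n}\simeq S^n_s\wedge\GG_m^{\wedge n}$, and one will see that $\AA^n\bs\{0\}\simeq S^{n-1}_s\wedge\GG_m^{\wedge n}$, so the first isomorphism is a single simplicial suspension. Concretely, I would apply the homotopy purity theorem of Morel--Voevodsky \cite{MV99} to the closed immersion $\{0\}\hookrightarrow\AA^n$, whose normal bundle is trivial of rank $n$: purity yields a cofiber sequence $(\AA^n\bs\{0\})_{+}\to\AA^n_{+}\to\Th(\mcal{O}^{\oplus n})\simeq(\PP^1)^{\wedge n}$ of pointed motivic spaces, and since $\AA^n$ is $\AA^1$-contractible the cofiber of the first map is $\AA^1$-equivalent to $\Sigma_s(\AA^n\bs\{0\})$; hence $\Sigma_s(\AA^n\bs\{0\})\simeq(\PP^1)^{\wedge n}$ and in particular $\AA^n\bs\{0\}$ is $(n-2)$-$\AA^1$-connected. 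The $\AA^1$-Freudenthal suspension theorem then makes $\pi_{n-1}^{\AA^1}(\AA^n\bs\{0\})\to\pi_n^{\AA^1}((\PP^1)^{\wedge n})$ an isomorphism whenever $n-1\leq 2(n-2)$, i.e.\ for $n\geq 3$; the remaining case $n=2$ I would treat by hand, identifying $\AA^2\bs\{0\}$ with $\SL_2$ through an $\AA^1$-bundle (the first-column map) and computing $\pi_1^{\AA^1}(\SL_2)$ directly as in \cite{morel2012A1topology}.

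For the second isomorphism, $(\PP^1)^{\wedge n}\simeq S^n_s\wedge\GG_m^{\wedge n}$ is $(n-1)$-$\AA^1$-connected, so for $n\geq 2$ the $\AA^1$-Hurewicz theorem---which rests on Morel's theorem that $\pi_i^{\AA^1}$ is a strictly $\AA^1$-invariant sheaf for $i\geq 2$---identifies $\pi_n^{\AA^1}((\PP^1)^{\wedge n})$ with the reduced $\AA^1$-homology sheaf $\widetilde{H}_n^{\AA^1}((\PP^1)^{\wedge n})$, and the suspension isomorphism for $\AA^1$-homology rewrites this as $\widetilde{H}_0^{\AA^1}(\GG_m^{\wedge n})$, the free strictly $\AA^1$-invariant sheaf of abelian groups generated by the pointed sheaf $\GG_m^{\wedge n}$. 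The statement is thereby reduced to Morel's fundamental computation $\widetilde{H}_0^{\AA^1}(\GG_m^{\wedge n})\cong\mathbf{K}^{\MW}_n$.

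This last computation is where I expect the real work to lie. I would argue by induction on $n$ using the $\GG_m$-contraction functor $M\mapsto M_{-1}:=\shom(\GG_m,M)$ on strictly $\AA^1$-invariant sheaves, which on the homotopy side satisfies $\widetilde{H}_0^{\AA^1}(\GG_m^{\wedge n})_{-1}\cong\widetilde{H}_0^{\AA^1}(\GG_m^{\wedge(n-1)})$ and matches the algebraic identity $(\mathbf{K}^{\MW}_n)_{-1}\cong\mathbf{K}^{\MW}_{n-1}$ recorded in the preliminaries. One then builds a comparison map $\mathbf{K}^{\MW}_n\to\widetilde{H}_0^{\AA^1}(\GG_m^{\wedge n})$ sending a symbol $[u_1,\dots,u_n]$ to the class of $(u_1,\dots,u_n)$, and checks that the four defining relations of $\mathbf{K}^{\MW}$ are realized by explicit $\AA^1$-homotopies: the Steinberg relation $[a][1-a]=0$ from the homotopy supported on the curve $\{(a,1-a)\}$, and the symbol $\eta$ together with relations $(2)$--$(4)$ from the algebraic Hopf map $\AA^2\bs\{0\}\to\PP^1$ and its behaviour under smashing by $\GG_m$. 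Surjectivity is then essentially formal, since for $n\geq 1$ the symbols additively generate $\mathbf{K}^{\MW}_n$. \emph{The hard part is injectivity}---that $\GG_m^{\wedge n}$ imposes no relations on $\mathbf{K}^{\MW}_n$ beyond $(1)$--$(4)$. This is exactly where one must invoke Morel's strict $\AA^1$-invariance theorem in full, together with the unramified (Gersten-type) presentation of $\mathbf{K}^{\MW}$ assembled from the residue homomorphisms recalled above; the contraction bookkeeping then reduces it to the base case $n=1$, where $\widetilde{H}_0^{\AA^1}(\GG_m)\cong\mathbf{K}^{\MW}_1$ is verified directly. I expect this injectivity---equivalently, the vanishing of the kernel of the tautological surjection---to be the principal obstacle.
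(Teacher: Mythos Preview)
The paper does not prove this theorem; it is stated purely as a citation to \cite[Theorem~6.40]{morel2012A1topology} and used as a black box in what follows. There is therefore no proof in the paper to compare your proposal against.

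That said, your outline is a faithful sketch of Morel's own argument and identifies the correct architecture: the purity cofiber sequence giving $\Sigma_s(\AA^n\bs\{0\})\simeq(\PP^1)^{\wedge n}$, the $\AA^1$-Freudenthal suspension isomorphism in the stable range (with $n=2$ handled separately), the Hurewicz reduction of $\pi_n^{\AA^1}((\PP^1)^{\wedge n})$ to $\wt{H}_0^{\AA^1}(\GG_m^{\wedge n})$, and finally Morel's computation of the latter as $K^{\MW}_n$ via contraction and the unramified/Gersten machinery. You are also right that the injectivity of the comparison map is where the depth lies. One small orientation remark: in Morel's setup the comparison map is more naturally produced \emph{out of} the free strictly $\AA^1$-invariant sheaf $\wt{H}_0^{\AA^1}(\GG_m^{\wedge n})$ by its universal property, landing in $K^{\MW}_n$, rather than the direction you wrote; the substance is unchanged, but the hard step then becomes surjectivity of that map (equivalently, that the Milnor--Witt relations exhaust all relations among symbols in the free object), which is what the Gersten/residue input buys.
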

This theorem helps one to understand the endomorphism ring $[\AA^n\bs\{0\}, \AA^n\bs \{0\}]_{\AA^1}$, for all $n\ge 2$. In particular, \cite[Corollary 6.43]{morel2012A1topology} helps us to identify the zeroth part of Milnor-Witt sheaves with the Grothendieck-Witt ring of quadratic forms over a field $k$:
\begin{equation}\label{Gw=K_0^MW:eqn}
    [\AA^n_k\bs\{0\}, \AA^n_k\bs\{0\}]_{\AA^1} \cong K^{\MW}_0(k) \cong \GW(k).
\end{equation}        
If $f:\Absn \to \Absn $ is a morphism of spaces, then its class of $[f]$ in $\mcal{H}(k)$ is called the \emph{motivic Brouwer degree of $f$}. To compute this Brouwer degree, one uses the following cohomological technique.
\begin{lemma}\label{A1degreemap}
Let $f:\Absn\to \Absn$ be a morphism in $\mathcal{H}(k)$. Then $f$ is an isomorphism if and only if 
        $$f^*: H^{n-1}(\Absn, K^{\MW}_n)\to H^{n-1}(\Absn, K^{\MW}_n)$$
is an isomorphism of Milnor-Witt $K$-theory groups. In other words, the degree of $f$ is 1.
\end{lemma}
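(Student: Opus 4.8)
The plan is to reduce the statement to Morel's computation of the self-maps of motivic spheres. Write $X:=\Absn$ and assume $n\geq 2$ (the range covered by \cref{thm:A1-Brouwer} and \eqref{Gw=K_0^MW:eqn}), so that $X$ is $\AA^1$-$(n-2)$-connected with $\pi_{n-1}^{\AA^1}(X)\cong K^{\MW}_n$. The first thing I would record is Morel's $\AA^1$-Hurewicz theorem: since $X$ is $(n-2)$-connected, the first non-vanishing reduced $\AA^1$-homology sheaf is $\widetilde H^{\AA^1}_{n-1}(X)\cong\pi_{n-1}^{\AA^1}(X)\cong K^{\MW}_n$, and $\widetilde H^{\AA^1}_j(X)=0$ for $j<n-1$. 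A universal-coefficient computation (with no $\Ext$-contribution, because $\widetilde H^{\AA^1}_{n-2}(X)=0$), together with the standard localization-sequence computation of $K^{\MW}$-cohomology of $\Absn$, then gives a canonical isomorphism $H^{n-1}(X,K^{\MW}_n)\cong \Hom(K^{\MW}_n,K^{\MW}_n)\cong K^{\MW}_0(k)\cong\GW(k)$ under which $H^{n-1}(X,K^{\MW}_n)$ is a free module of rank one over $H^0(X,K^{\MW}_0)\cong\GW(k)$, generated by the fundamental class $\iota_X$ (the image of $\mathrm{id}_X$ under the Hurewicz map $X\to K(K^{\MW}_n,n-1)$). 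This is the cohomological shadow of \eqref{Gw=K_0^MW:eqn}.

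Next I would analyze the action of $f$. Because $X$ is $\AA^1$-connected, $f$ induces the identity on $H^0(X,K^{\MW}_0)=\GW(k)$; since $f^*$ is compatible with the cup-product module structure, $f^*\colon H^{n-1}(X,K^{\MW}_n)\to H^{n-1}(X,K^{\MW}_n)$ is $\GW(k)$-linear, hence is multiplication by the scalar $\deg(f)\in\GW(k)$ determined by $f^*(\iota_X)=\deg(f)\cdot\iota_X$. Unwinding the Hurewicz identification, $\deg(f)$ is exactly the motivic Brouwer degree of $f$, i.e.\ the image of $[f]$ under the bijection $[\Absn,\Absn]_{\AA^1}\cong\GW(k)$ of \eqref{Gw=K_0^MW:eqn}. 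Since multiplication by an element $r$ of a commutative ring $R$ is bijective iff $r\in R^\times$, the map $f^*$ on $H^{n-1}(X,K^{\MW}_n)$ is an isomorphism iff $\deg(f)\in\GW(k)^\times$.

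To close the loop I would invoke that the $\AA^1$-degree is multiplicative for composition, $\deg(g\circ f)=\deg(g)\deg(f)$ and $\deg(\mathrm{id}_X)=\langle 1\rangle$ (Morel's composition formula, \cite[Ch.~6]{morel2012A1topology}). Combined with bijectivity of $\deg$ from \eqref{Gw=K_0^MW:eqn}, this shows that $[f]$ is invertible in the endomorphism monoid of $X$ in $\mcal{H}(k)$ iff $\deg(f)\in\GW(k)^\times$: given $d'$ with $\deg(f)d'=1$, choose $g$ with $\deg(g)=d'$; then $\deg(g\circ f)=1=\deg(\mathrm{id}_X)=\deg(f\circ g)$ forces $g\circ f=\mathrm{id}_X=f\circ g$ in $\mcal{H}(k)$ by injectivity of $\deg$, so $f$ is an isomorphism. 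Chaining the two equivalences: $f$ is an isomorphism in $\mcal{H}(k)$ $\Longleftrightarrow$ $\deg(f)\in\GW(k)^\times$ $\Longleftrightarrow$ $f^*$ is an isomorphism on $H^{n-1}(\Absn,K^{\MW}_n)$. In the applications one in fact computes $\deg(f)=\langle 1\rangle$, i.e.\ $f^*=\mathrm{id}$ on $H^{n-1}$, which is the content of the "degree $1$" reformulation.

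The part that really needs care — and the only genuinely non-formal input — is the package of results from \cite[Ch.~6]{morel2012A1topology}: the $\AA^1$-connectivity of $\Absn$ and the Hurewicz isomorphism, the computation $H^{n-1}(\Absn,K^{\MW}_n)\cong\GW(k)$ together with the verification that $f^*$ acts through the \emph{same} scalar $\deg(f)$ that governs invertibility in the endomorphism monoid, and the multiplicativity of the $\AA^1$-degree (equivalently, that a self-map of $\Absn$ invertible on the bottom $\AA^1$-homotopy sheaf is invertible in $\mcal{H}(k)$). Everything after those inputs is the formal observation "multiplication by $r$ is bijective iff $r$ is a unit", applied twice.
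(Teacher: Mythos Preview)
Your argument is correct and supplies precisely the content that the paper omits: the paper's own proof is the single line ``See \cite[Lemma 2.2]{DF18}'', so there is no in-text argument to compare against. What you have written is essentially the expected unpacking of that citation --- Morel's identification of $[\Absn,\Absn]_{\AA^1}$ with $\GW(k)$ together with the Hurewicz/representability package making $f^*$ on $H^{n-1}(\Absn,K^{\MW}_n)\cong\GW(k)$ act as multiplication by $\deg(f)$, and then the formal observation that multiplication by $r$ is bijective iff $r$ is a unit. Your closing remark is also a useful clarification: the lemma's phrase ``the degree of $f$ is $1$'' is strictly speaking stronger than ``$f^*$ is an isomorphism'' (since any $\langle a\rangle\in\GW(k)^\times$ would do), but in the paper's application one indeed checks $f^*=\mathrm{id}$, so the discrepancy is harmless.
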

\begin{proof}
See \cite[Lemma 2.2]{DF18}.
\end{proof}

\section{Relative \texorpdfstring{$\mathbb{A}^1$}{A1}-contractibility of Koras-Russell threefolds}\label{sec:Rel-A1-Contr-KR3F}

\begin{defn}\label{defn:KR3F}
For a perfect field $k$, the \emph{Koras-Russell threefolds of first kind} are defined by the explicit polynomial equation 
\begin{equation}\label{defining-eqn:KR3F}
         \KK:= \{x^nz-y^r-t^s-x=0 \}\subset \AA^4_k
\end{equation}
where the integers $n\geq 2$ and $r,s\ge 1$ such that gcd $(r,s) = 1$.
\end{defn}

Traditionally, Koras-Russell threefolds are studied extensively over a field of characteristic zero. But in the light of coprimality of the integers $r,s$ and the usefulness of the coordinate '$x$' to make the Jacobian criterion for smoothness work, we here record that it is also well-defined over any perfect field (in fact, $k$ can be arbitrary as well) and more so over arithmetic curves such as the integers $\Spec \ZZ$. Note that the induced projection map $f:\KK \to \AA^1_x$ has all fibers except 0 isomorphic to $\AA^2$ and the fiber at the origin isomorphic to a cylinder on the cuspidal curve $\AA^1_x \times C_{r,s}$ (see \cref{Book-surface}). More generally, one studies a deformed version of the Koras-Russell threefolds given by $\{x^mz = x q(x)+y^r+t^s\}$, for any polynomial $q(x)$ such that $q(0)\in k^\times$ (see e.g., \cite{dubouloz2011noncancellation}). In \cite{DF18}, the authors prove the $\AA^1$-contractibility of the Koras-Russell threefold of first kind over a field of characteristic zero. In the first part of this article, we shall prove that this indeed can be extended over any field and, as a consequence, over general base schemes. To put the strategy into context, we shall explain some elements from \cite{DF18}.
\begin{figure}[h]
    \centering
    \includegraphics[width=8cm, height=4cm]{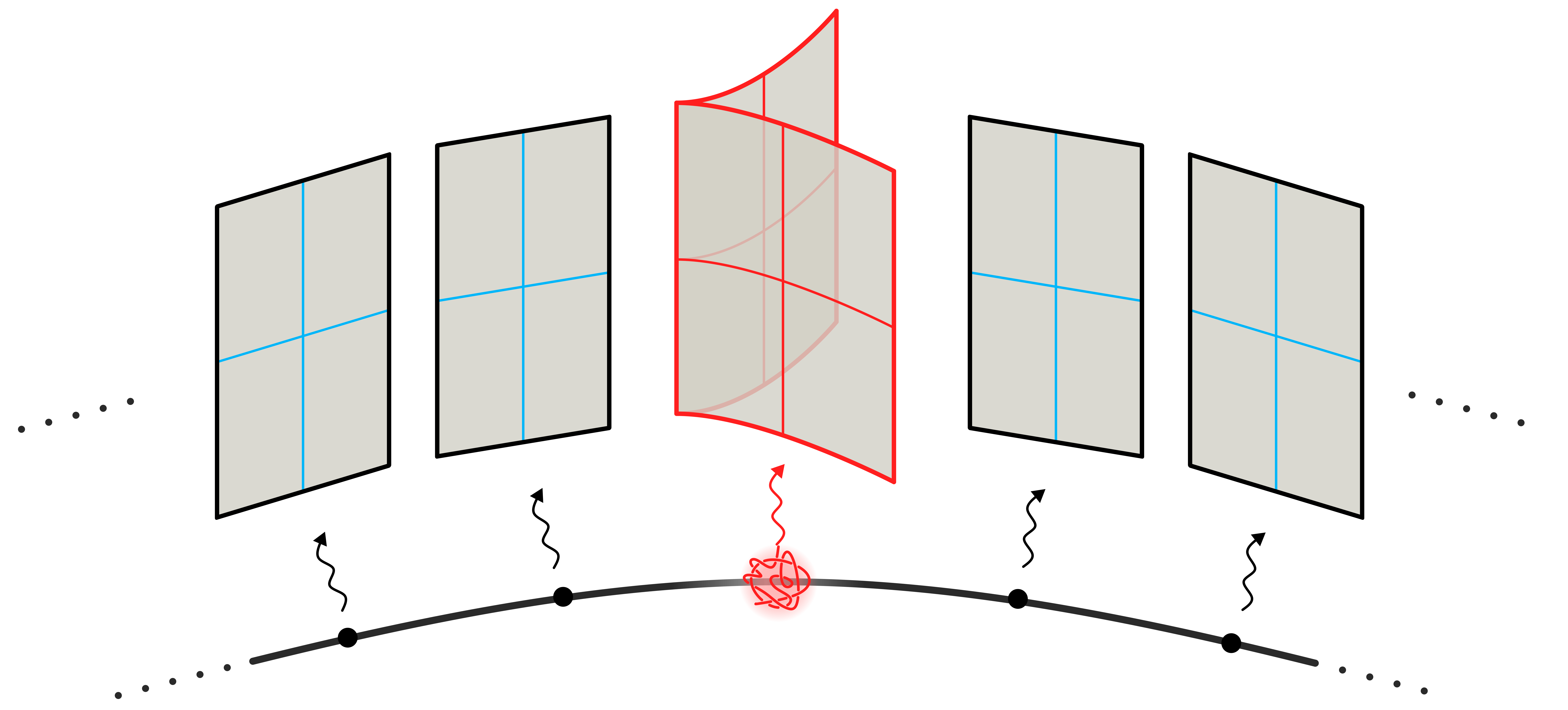}
    \caption{Fibers of the Koras-Russell threefolds projected to $\AA^1_x$ with the generic point indicated in red.}
    \label{Book-surface}
\end{figure}
The basic ingredient is a version of the five lemma (cf. \cref{weak5lemma}). To adapt this lemma to our situation, we note that $\KK$ contains a hypersurface given by $\{z=0\}:= P\subset \KK$ which can be shown to be isomorphic to $\AA^2$ with co-ordinates $y$ and $t$, i.e., $P \cong \Spec k[y,t]$. Moreover, $\KK$ also contains an affine line given by $L:= \{x=y=t=0\} \cong \AA^1$, i.e., $L \cong \Spec(k[z])$, which gives us $P\backslash L \simeq \AA^2\backslash \{0\}$. By definition, the varieties $L$ and $P$ intersect transversally in $\AA^4_k$ at the point $(0,0,0,0)$ giving us the following pullback diagram:
\begin{equation}
\begin{tikzcd}[column sep=2pt]
    &&&&& {P\backslash L \simeq \mathbb{A}^2\backslash \{0\}} & {} &       {P\simeq \mathbb{A}^2} \\
    \\{} &&&&& {\mathcal{K}\backslash L} && {\mathcal{K}}
    \arrow[from=1-6, to=1-8]
    \arrow[from=1-6, to=3-6]
    \arrow[from=1-8, to=3-8]
    \arrow[from=3-6, to=3-8]
\end{tikzcd}
\end{equation}
Consider the associated cofiber sequence given by the following commutative diagram
\begin{equation}\label{setup:diagram1}
\begin{tikzcd}[column sep=15pt]
	&&&&& {\mathbb{A}^2\backslash \{0\}} & {} & {\mathbb{A}^2} && {\mathbb{A}^2/\mathbb{A}^2\backslash\{0\}} \\ \\
	{} &&&&& {\mathcal{K}\backslash L} && {\mathcal{K}} && {\mathcal{K}/\mathcal{K}\backslash{L}}
	\arrow[from=1-6, to=1-8]
	\arrow["i"', from=1-6, to=3-6]
	\arrow[from=1-8, to=1-10]
	\arrow["\phi", from=1-8, to=3-8]
	\arrow["j", from=1-10, to=3-10]
	\arrow[from=3-6, to=3-8]
	\arrow[from=3-8, to=3-10]
\end{tikzcd}
\end{equation}

In the light of \cref{weak5lemma}, it suffices to show that the morphisms $i$ and $j$ are $\AA^1$-weak equivalences in $\Spc_k$. 
\subsection{$\AA^1$-weak equivalence of $j$:} \label{A1-weakeq-of-j}
The arguments to prove the $\AA^1$-weak equivalence of $j$ are exactly as those of \cite{DF18}, which is derived from the purity isomorphism (\cite[Theorem 2.23]{MV99}). In fact, the defining equation of $\KK$ gives us a description of the normal cone of $L$ inside $\KK$. Namely, by rewriting $\KK$ as $\{x(x^{m-1 }z-1) = y^r+t^s\}$, one observes that the regular function ($x^{m-1}z-1$) invertible in $L$ (as it is equal to -1 in $L$) and that the normal cone $N_{\KK}(L)$ is generated by $y$ and $t$. Coupling this data with the homotopy purity, we have that 
    $$X/(X\backslash{L}) \simeq \Th(N_{\KK}(L))\simeq L_+\wedge (\PP^1)^{\wedge2}.$$
Again by purity, the quotient space $\AA^2/\AA^2\backslash \{0\}$ is identified with $(\PP^1)^{\wedge2}$ (cf. \cite[\S 4.6]{antieau2017primer} or \cite[\S 2.1]{ADF2017smooth}). The inclusion $\Spec k \simeq \{0\}\hookrightarrow L\simeq \AA^1$ induces a the composite 
$$\Spec k \wedge (\PP^1)^{\wedge 2}\simeq \AA^2/\AA^2\backslash \{0\} \xrightarrow{j} \KK/\KK\backslash L \simeq L_+\wedge (\PP^1)^{\wedge 2}$$
which is an $\AA^1$-weak equivalence (cf. \cite[Lemma 2.1]{voevodsky2003Z/2}) and so is $j$.

\subsection{$\AA^1$-weak equivalence of $i$}\label{A1-weakeq-of-i}
To prove the $\AA^1$-weak equivalence of $i:\AA^2\backslash \{0\} \to \KK\bs L$, the strategy is to first produce an explicit $\AA^1$-weak equivalence $g:\KK\bs L\to \AA^2\bs \{0\}$. Then, using a key property of motivic Brouwer degree as in \cref{A1degreemap}, we show that the composite $g\circ i: \AA^2\bs \{0\}\to \AA^2\bs \{0\}$ is an $\AA^1$-weak equivalence in $\Spc_k$.

\subsection*{The explicit $\AA^1$-weak equivalence via Zariski local triviality}
In what follows, we shall explain the strategy to produce the explicit $\AA^1$-weak equivalence $g: \KK\bs L\to \AA^2\bs \{0\}$ that has the structure of a Zariski locally trivial $\AA^1$-bundle over $\AA^2\bs\{0\}$ and the corresponding adaptations when $k$ is perfect. Consider the affine scheme $\KK(s):=\{x^mz=x+y^r+t^s\}$ along with the strictly quasi-affine scheme $\KK(s)\bs \{0\}$. Note that for $s=1$, we have $\KK(1):= \{x^mz = x+y^r+t\}\cong \AA^3_k$, by the change of co-ordinates and as a result, $\KK(1)\bs L \simeq \AA^2\bs \{0\}$. Now the strategy is to prove the existence of a quasi-affine fourfold $W$ that is the total space of a Zariski locally trivial $\AA^1$-bundle over $\mcal{K}(s)\bs L$, for all $s\geq 1$. This fourfold $W$ is produced by exploiting the Danielewski fiber product trick that was initially constructed to produce a counterexample to the cancellation problem for surfaces (\cite{danielewski1989cancellation}, see also \cite{dubouloz2007addDanielewski}). Here, we show that the same strategy can be extended over all perfect fields with suitable modifications.

\begin{prop}\label{Daniel:trick 4fold}
Let $k$ be any perfect field. Then for every $s\geq 1$, there exists a smooth strictly quasi-affine fourfold $W$ which is simultaneously the total space of a Zariski locally trivial $\AA^1$-bundles over $\KK(s)\bs L$. 
\end{prop}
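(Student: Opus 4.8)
The strategy is to realize $W$ via the Danielewski fiber product construction, carried out over a base which is itself a Zariski-locally trivial $\AA^1$-bundle, so that all the needed geometry happens over the base field $k$. First I would set up the two "halves": the affine threefold $\KK(s) = \{x^m z = x + y^r + t^s\}$ and the affine threefold $\KK(1) = \{x^m z' = x + y^r + t'\} \cong \AA^3_k$, both equipped with their projection to $\AA^1_x$ and the induced morphism $\pi_s : \KK(s) \setminus L \to \AA^2 \setminus \{0\}$ obtained by recording $(x, y)$ (or more precisely the coordinates cutting out the book-surface picture of \cref{Book-surface}). The key observation to isolate is that over the locus $\{x \neq 0\}$ both $\KK(s) \setminus L$ and $\KK(1) \setminus L$ are trivial — since $z$ (resp. $z'$) is determined once $x, y, t$ (resp. $x, y, t'$) are known and $x$ is invertible — while over a neighborhood of the fiber $x = 0$ one uses the cuspidal-curve normalization $\AA^1 \to C_{r,s}$, $w \mapsto (w^s, w^r)$, which is an $\AA^1$-weak equivalence by \cref{A1-contr:egs}, to compare the $t^s$ and $t^1$ pictures. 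This is exactly the point where perfectness of $k$ must be invoked: the normalization map and the smoothness of the resulting total space over $k$ require that the residue fields behave well, and I would flag this dependence explicitly.

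Concretely, I would define $W$ as the fiber product of $\KK(s) \setminus L$ and $\KK(1) \setminus L$ over $\AA^2 \setminus \{0\}$ along the two projections, or rather a suitable smooth locus thereof. The Danielewski trick is that although neither $\KK(s) \setminus L \to \AA^2 \setminus \{0\}$ nor $\KK(1) \setminus L \to \AA^2 \setminus \{0\}$ is a priori a Zariski-locally trivial $\AA^1$-bundle globally, the fiber product $W$ projects to each factor as a morphism that \emph{is} a Zariski-locally trivial $\AA^1$-bundle: over the chart $\{x \neq 0\}$ this is clear from the explicit equations, and over a Zariski neighborhood of the book-surface fiber one trivializes using the change of variables adapted to the ideal $(y, t)$ (and the fact that $x^{m-1} z - 1$ is a unit along $L$, as used in \S\ref{A1-weakeq-of-j}). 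I would then check smoothness and strict quasi-affineness of $W$ by the Jacobian criterion on each chart — the coordinate $x$ again ensures the Jacobian has maximal rank — noting that $W$ is quasi-affine but not affine because it inherits the puncture from $\AA^2 \setminus \{0\}$.

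The main obstacle I anticipate is not the existence of the fiber product but verifying that the two projections $W \to \KK(s) \setminus L$ and $W \to \KK(1) \setminus L$ are genuinely \emph{Zariski}-locally trivial $\AA^1$-bundles (as opposed to merely Nisnevich-locally trivial, or $\AA^1$-fiber spaces in the sense of \cref{def:A1-fib-space}), since it is precisely Zariski-local triviality that will later let us conclude an $\AA^1$-weak equivalence via \cref{A1-contr:egs}. This forces a careful bookkeeping of transition functions: one must produce an honest Zariski open cover of $\KK(s) \setminus L$ — presumably $\{x \neq 0\}$ together with a principal open around the origin of the $x$-fiber — on each piece of which $W$ becomes a product with $\AA^1$, and then check the cocycle condition on overlaps. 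The subtlety over a general perfect field, as opposed to an algebraically closed field of characteristic zero, is that one cannot freely extract roots or use characteristic-zero normal forms; I would handle this by keeping all constructions defined over the prime field and only using that $k$ is perfect to guarantee smoothness of $\KK(s)$ (via \cref{cor:A1-cont-fibers}-type base change is not enough here — one needs the Jacobian criterion directly, which works over any field thanks to the presence of the linear term $x$). Once $W$ is constructed with both bundle structures, the statement follows.
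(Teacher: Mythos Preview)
Your proposal has the right Danielewski-fiber-product shape but a genuine gap in the choice of base. Taking $W = (\KK(s)\setminus L)\times_{\AA^2\setminus\{0\}}(\KK(1)\setminus L)$ over $\AA^2\setminus\{0\}$ does \emph{not} produce $\AA^1$-bundles on the two projections. The reason is that the quotient map $q:\KK(s)\setminus L \to \AA^2\setminus\{0\}$ (projection to $(x,t)$, not $(x,y)$) is itself not an $\AA^1$-fiber space: over a point with $x=0$ and $t\neq 0$ the fiber is $\{y^r+t^s=0\}\times\AA^1_z$, which has $r$ connected components (after passing to a suitable \'etale cover), not one. Pulling this back along $\KK(1)\setminus L\to\AA^2\setminus\{0\}$ therefore still has disconnected fibers and cannot be an $\AA^1$-bundle in any topology. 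No amount of chart bookkeeping over $\{x\neq 0\}$ and a neighborhood of $x=0$ will repair this, because the obstruction is the fiber itself, not the transition functions.

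The paper's fix --- and this is the essential missing idea --- is to insert an intermediate algebraic space $\mathfrak{S}$ between $\KK(s)\setminus L$ and $\AA^2\setminus\{0\}$, obtained by replacing the punctured line $\{x=0\}\subset\AA^2\setminus\{0\}$ by the finite \'etale cover $C_1=\{y^r+t^s=0,\ t\neq 0\}\to\Spec k[t^{\pm 1}]$ (more precisely, by the quotient of an explicit scheme $S$ by the Galois group of the closure of this cover). The point is that $\mathfrak{S}$ depends only on $r$ and not on $s$, so both $\KK(s)\setminus L$ and $\KK(1)\setminus L$ factor through $\rho:\KK(s)\setminus L\to\mathfrak{S}$ as \emph{\'etale} locally trivial $\AA^1$-bundles over the \emph{same} base. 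One then sets $W=(\KK(1)\setminus L)\times_{\mathfrak{S}}(\KK(s)\setminus L)$; the projections are \'etale locally trivial $\AA^1$-bundles by base change, and Zariski local triviality follows not from explicit trivializations but from the fact that $\Aut(\AA^1_k)=\GG_{m}\ltimes\GG_a$ is special in the sense of Grothendieck. Finally, perfectness of $k$ is used not for smoothness of $\KK(s)$ (the linear $x$-term handles that over any field) but to control the Galois closure of $C_1\to C_0$: when $\mathrm{char}\,k=p$ divides $r$ one must instead project to $y$ and use the degree-$s$ cover, which is where $\gcd(r,s)=1$ enters.
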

\begin{proof}
The desired fourfold $W$ is constructed as the pullback of two strictly quasi-affine schemes over an algebraic space $\frak{S}$ 
\[\begin{tikzcd}
	W & {\KK(s)\backslash L} \\
	{\KK(1)\backslash L} & {\mathfrak{S}}
	\arrow["{\pr_s}", from=1-1, to=1-2]
	\arrow["{\pr_1}"', from=1-1, to=2-1]
	\arrow[from=1-2, to=2-2]
	\arrow[from=2-1, to=2-2]
\end{tikzcd}\]
whose existence is proven in \cref{exist:algspace} where we also show that for all $s\geq 1$, the quasi-affine threefolds $\mcal{K}(s)$ all have a structure of an \'etale locally trivial $\AA^1$-bundle $\rho:\mcal{K}(s) \to \frak{S}$ fibered over the same algebraic space $\frak{S}$. Apriori, by taking the fiber product $W:= (\mcal{K}(1)\bs L) \times_{\frak{S}} (\mcal{K}(s)\bs L)$, we see that $W$ only has the structure of an algebraic space. But by noticing that the natural projections $\pr_s: W\to \mcal{K}(s)\bs L$ is an affine morphism and $\mcal{K} \bs L$ is a strictly quasi-affine scheme, which consequently implies that $W$ also has to be a strictly quasi-affine scheme. The fact that this \'etale equivalence descends also to the equivalence in the Zariski topology is due to the deep fact that the transition group $\Aut(\AA^1_k) = \GG_{m,k} \ltimes \GG_a$ is special, in the sense of \cite[\S 5]{Grothendieck1958torsion}.
\end{proof}

As a direct consequence, we obtain our desired result.
\begin{corollary}\label{g-is-an-A1-weq}
The map $g: \KK(s)\bs L \to \AA^2\bs \{0\}$ is an $\AA^1$-weak equivalence for all $s\geq 1$.
\end{corollary}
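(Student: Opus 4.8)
The plan is to assemble the geometric input of \cref{Daniel:trick 4fold} with the elementary homotopy-invariance properties collected in \cref{A1-contr:egs}. By \cref{Daniel:trick 4fold}, for each $s\geq 1$ the smooth strictly quasi-affine fourfold $W$ is simultaneously the total space of a Zariski locally trivial $\AA^1$-bundle over $\KK(s)\bs L$ (via $\pr_s$) and over $\KK(1)\bs L$ (via $\pr_1$). Since any Zariski (resp. Nisnevich) locally trivial bundle with $\AA^1$-contractible fibers is an $\AA^1$-weak equivalence (third bullet of \cref{A1-contr:egs}), both $\pr_s$ and $\pr_1$ are $\AA^1$-weak equivalences in $\Spc_k$, so that we obtain a roof $\KK(s)\bs L \xleftarrow{\ \pr_s\ } W \xrightarrow{\ \pr_1\ } \KK(1)\bs L$ of $\AA^1$-weak equivalences.

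It then remains to identify $\KK(1)\bs L$ with $\AA^2_k\bs\{0\}$ in $\mcal{H}(k)$. The assignment $t\mapsto x^mz-x-y^r$ defines an isomorphism $k[x,y,z,t]/(x^mz-x-y^r-t)\xrightarrow{\ \sim\ }k[x,y,z]$, hence $\KK(1)\cong\AA^3_k$; tracing through this change of coordinates, the line $L=\{x=y=t=0\}$ is carried onto the coordinate axis $\{x=y=0\}\cong\AA^1_z$, whence $\KK(1)\bs L\cong(\AA^2_{x,y}\bs\{0\})\times\AA^1_z$. The first projection of the latter is a trivial $\AA^1$-bundle, hence an $\AA^1$-weak equivalence by \cref{A1-contr:egs}. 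Composing all of these $\AA^1$-weak equivalences in $\mcal{H}(k)$ produces the asserted morphism $g:\KK(s)\bs L\to\AA^2_k\bs\{0\}$, which is therefore an $\AA^1$-weak equivalence for all $s\geq 1$.

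Given \cref{Daniel:trick 4fold}, the corollary is a formal consequence, so the main obstacle is entirely upstream: it lies in \cref{exist:algspace} and \cref{Daniel:trick 4fold}, namely in constructing the classifying algebraic space $\frak{S}$, realizing every $\KK(s)$ as an étale locally trivial $\AA^1$-bundle over it, forming the fiber product $W$ and checking it is representable by a strictly quasi-affine scheme (using that $\pr_s$ is an affine morphism onto a strictly quasi-affine target), and finally upgrading étale to Zariski local triviality by invoking the speciality of the structure group $\Aut(\AA^1_k)=\GG_{m,k}\ltimes\GG_a$ in the sense of Grothendieck. The only minor bookkeeping point at the present stage is that $g$ is \emph{a priori} merely a morphism in $\mcal{H}(k)$ — a roof whose backward leg is $\pr_s$ — but this is exactly what is required for the subsequent motivic Brouwer-degree computation carried out in \cref{A1-weakeq-of-i}.
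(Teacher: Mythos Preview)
Your argument is correct and follows the same route as the paper: both use \cref{Daniel:trick 4fold} to produce the roof $\KK(s)\bs L \xleftarrow{\pr_s} W \xrightarrow{\pr_1} \KK(1)\bs L$ of Zariski locally trivial $\AA^1$-bundles, invoke the fact that such bundles are $\AA^1$-weak equivalences, and then use the identification $\KK(1)\bs L \simeq \AA^2\bs\{0\}$ already noted in the text. The paper compresses all of this into a single sentence citing that Zariski locally trivial $\AA^n$-bundles preserve $\AA^1$-weak equivalences, whereas you have spelled out the zigzag and the coordinate identification explicitly; your closing remark that $g$ is \emph{a priori} only a morphism in $\mcal{H}(k)$ is a fair observation that the paper leaves tacit.
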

\begin{proof}
This is due to the observation that Zariski locally trivial $\AA^n$-bundles preserve $\AA^1$-weak equivalences (see, for example \cite[Lemma 3.1.3]{asok2021A1}).
\end{proof}

\subsection*{Strategy to construct $\frak{S}$ over perfect fields:} 
The existence of the claimed algebraic space $\frak{S}$ is due to the fundamental fact in moduli theory that if an algebraic group $G$ acts freely on a scheme $X$, then the quotient $X/G$ exists in the category of algebraic spaces (\cite[\href{https://stacks.math.columbia.edu/tag/071R}{Tag 071R}]{stacks-project}). When the char $k=0$, producing a $\GG_a$-action on an affine variety $X$ is equivalent to providing a locally nilpotent derivation $D$ on its coordinate ring $\Gamma(X)$, which follows due to the bijection of the exponential map $t\mapsto \exp (tD)$ (cf. \cite[Chapter 1, \S 1.5]{freudenburg2006algebraic}). However, if the char $k= p >0$, this simple correspondence does not a priori apply, for example, $1/p!$ in the exponential function is not well-defined. Nevertheless, one still gets a correspondence by considering a locally finite iterative higher derivations $\partial$ instead of a single locally nilpotent derivation. The correspondence is obtained by considering the truncated version of $\exp(t\partial)$ (see \cite[Part I, \S 1]{miyanishi1978lectures}, \cite[\S 5]{okuda2004kernels}). Given the existence of such a family of locally finite iterative higher derivations, we have that for a $\GG_a$-action on $\KK(s)$ its algebraic quotient coincides with the projection $\pr_{x,t}:\AA^4\to \AA^2$ restricted to $q: \KK(s)\to \AA^2$ with the fixed point locus $L$. Hence, this $\GG_a$-action induces a fixed point free $\GG_a$-action on $\KK(s)\bs L$ 
\[\begin{tikzcd}
	{\mathcal{K}(s)\backslash L} && {\mathbb{A}^2\backslash \{0\}}\\
	& {(\KK(s)\bs L)/\GG_a}
	\arrow["q",from=1-1, to=1-3]
	\arrow[from=1-1, to=2-2]
	\arrow[from=2-2, to=1-3]
\end{tikzcd}\]
such that it factors through an \'etale $\GG_a$-torsor over its geometric quotient $(\KK(s)\bs L)/ \GG_a$ as above. The desired algebraic space $\frak{S}$ is then constructed by an \'etale equivalence relation. Observe that for the morphism $q: \mcal{K}\bs L\to \AA^2\bs \{0\}$, the fibers over points $x \neq 0$ forms a Zariski locally trivial $\AA^1$-bundle, which is actually a trivial $\AA^1$-bundle; this is because if $x\neq 0$, one has a slice (i.e., $\partial a=1$) of the form $a = f(y)/x^m$, for some suitable polynomial $f(y) \in k[y]$.
\medskip

On the other hand, the fiber over the punctured affine line $C_0:= \{x=0\} = \Spec k[t^{\pm 1}] $ is given by $C\times \Spec k[z]$ where $C:=\Spec k[t^{\pm 1},y]/\langle y^r +t^s\rangle$. The curve $C \to C_0$ can be identified with a finite \'etale cover of degree $r$ with respect to the projection $(y,t) \mapsto t$. This is where the case over fields of positive characteristics crucially differs from that of characteristic zero. If char $k = p\mid r$, then one cannot form a finite \'etale cover as claimed above. But since we have that gcd $(r,s)= 1$, we can rework the proof as follows. If $p\mid r$, then we can then consider the finite \'etale cover 
    \begin{align*}
    & C:= \Spec k[t^{\pm 1},y]/\langle y^r +t^s\rangle \to\Spec k[t^{\pm 1}] := C_0 \\
    & \hspace{36mm} (y,t)\mapsto y    
    \end{align*}
of degree $s$ with respect to the other coordinate, where we can still extract the roots (i.e., $(y^r)^{1/s}$) since in this case $p\nmid s$ by the coprimality.

\begin{lemma}\label{exist:algspace}
For any (perfect) field $k$, there exists a smooth algebraic space $\delta: \frak{S}\to \AA^2_k\bs \{0\}$ such that for every $s\geq 1$, the map $q: \KK(s)\bs L\to \AA^2_k\bs \{0\}$ factors through an \'etale-locally trivial $\AA^1$-bundle $\rho: \KK(s)\bs L\to \frak{S}$.
\end{lemma}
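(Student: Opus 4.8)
The plan is to realize $\frak{S}$ as the quotient of the strictly quasi-affine scheme $\KK(s)\bs L$ by an explicit, fixed-point free $\GG_a$-action whose defining formula does not involve $s$, and then to identify the resulting quotient with one fixed ``doubled'' modification of $\AA^2_k\bs\{0\}$, the identification resting on the coprimality $\gcd(r,s)=1$.

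First I would pin down the group action. Assume $p:=\ch k$ does not divide $r$; if $p\mid r$ then $p\nmid s$ by coprimality, and one runs the mirror argument with $(y,r)$ and $(t,s)$ interchanged, replacing $q=\pr_{x,t}$ by $q=\pr_{x,y}$. On $\AA^4_k=\Spec k[x,y,z,t]$ consider
\[
\lambda\cdot(x,y,z,t)=\Bigl(x,\ y+x^{m}\lambda,\ z+\textstyle\sum_{i=1}^{r}\binom{r}{i}y^{r-i}x^{m(i-1)}\lambda^{i},\ t\Bigr).
\]
Since the $z$-summand is $x^{-m}\bigl((y+x^{m}\lambda)^{r}-y^{r}\bigr)$, this is a genuine algebraic $\GG_a$-action (the defining polynomial identity for a group action holds with $x$ formal, so there are no division issues) and it preserves every hypersurface $\KK(s)=\{x^{m}z=x+y^{r}+t^{s}\}$. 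In characteristic zero it is the exponential of the locally nilpotent derivation $D=x^{m}\partial_{y}+ry^{r-1}\partial_{z}$ (cf. \cite{freudenburg2006algebraic}); in characteristic $p$ the same data exhibit it as the exponential of a locally finite iterative higher derivation, so the correspondence of \cite{miyanishi1978lectures, okuda2004kernels} applies uniformly. The fixed locus is cut out by $x^{m}=0$ together with the vanishing of the $\lambda$-linear coefficient $ry^{r-1}$; as $p\nmid r$ this is $\{x=y=0\}$, which on $\KK(s)$ forces $t^{s}=0$, so the fixed locus is exactly $L=\{x=y=t=0\}$. The invariant ring is $k[x,t]$ (the functions $x,t$ are invariant, and inverting $x$ produces the slice $a:=y/x^{m}$ with $\lambda\cdot a=a+\lambda$), so the rational algebraic quotient is precisely $q=\pr_{x,t}\colon\KK(s)\to\AA^{2}$, undefined along $L$.

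Next I would pass to $U_{s}:=\KK(s)\bs L$, where the action is fixed-point free. It is in fact free, with a Zariski-local slice on each member of the two-element open cover $\{x\ne0\}\cup\{y\ne0\}=U_{s}$ (note $\{x=y=0\}\cap\KK(s)\subset L$): the slice $a=y/x^{m}$ on $\{x\ne0\}$, and $b:=z/(ry^{r-1})$ on $\{y\ne0\}$. Hence the quotient exists as a smooth algebraic space $\frak{S}_{s}:=U_{s}/\GG_{a}$ and $\rho\colon U_{s}\to\frak{S}_{s}$ is an étale-locally trivial $\GG_a$-torsor, i.e.\ an étale-locally trivial $\AA^{1}$-bundle, by \cite[Tag~071R]{stacks-project}; that $\GG_{m}\ltimes\GG_{a}=\Aut(\AA^{1}_{k})$ is a special group \cite[\S5]{Grothendieck1958torsion} is what forces the torsor to be Zariski-locally trivial on étale charts of $\frak{S}_{s}$, and smoothness of $\frak{S}_{s}$ over $k$ follows by descent along the smooth surjection $\rho$. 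Since the action fixes $x$ and $t$, the map $q$ is $\GG_a$-invariant and descends to $\delta_{s}\colon\frak{S}_{s}\to\AA^{2}_{k}\bs\{0\}$ with $q=\delta_{s}\circ\rho$. Over $\{x\ne0\}$ the slice $a$ trivializes the torsor, so $\delta_{s}$ is an isomorphism there; over the punctured axis $C_{0}:=\{x=0\}\cap(\AA^{2}\bs\{0\})=\Spec k[t^{\pm1}]$ the saturated preimage is $C^{(s)}\times\Spec k[z]$ with $C^{(s)}:=\Spec k[y,t^{\pm1}]/(y^{r}+t^{s})$, on which $\GG_a$ acts only through $z\mapsto z+ry^{r-1}\lambda$ (free, since $ry^{r-1}\in k[C^{(s)}]^{\times}$), so $\delta_{s}$ restricts over $C_{0}$ to the degree-$r$ finite étale cover $C^{(s)}\to C_{0}$. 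Thus $\frak{S}_{s}$ is canonically $\AA^{2}\bs\{0\}$ with the line $C_{0}$ replaced by $C^{(s)}$, glued along an étale neighborhood of $C_{0}$.

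Finally, the independence of $s$. The gluing datum above depends only on the cover $C^{(s)}\to C_{0}$, and I would show that for every $s$ coprime to $r$ this is, as a $C_{0}$-scheme, the standard connected degree-$r$ Kummer cover of $\GG_{m}$, hence the same as for $s=1$ (where $C^{(1)}=\Spec k[y^{\pm1}]$ and $t=-y^{r}$): picking integers $\alpha,\beta$ with $\alpha r+\beta s=1$ and using that $y,t$ are units on $C^{(s)}$, the monomial $w:=y^{\beta}t^{-\alpha}$ is a coordinate identifying $C^{(s)}\cong\GG_{m}$ with $t$ equal to a unit times $w^{r}$, so $C^{(s)}\to C_{0}$ is the twisted $r$-th power map, independent of $s$. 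Gluing $\AA^{2}\bs\{0\}$ to this fixed cover then produces one smooth algebraic space $\frak{S}$ with $\delta\colon\frak{S}\to\AA^{2}_{k}\bs\{0\}$, over which $\rho\colon\KK(s)\bs L\to\frak{S}$ is the required étale-locally trivial $\AA^{1}$-bundle for every $s$. The main obstacle is precisely this last identification together with the positive-characteristic bookkeeping: one must check that the explicit action above is the exponential of a genuinely locally finite iterative higher derivation (so that the scheme-theoretic quotient and its torsor structure are legitimate), and, when $p\mid r$, that the mirror construction along $t$ — available exactly because $\gcd(r,s)=1$ prevents $p$ from dividing both $r$ and $s$ — produces an algebraic space isomorphic, over the relevant copy of $\AA^{2}$, to the one obtained for $s=1$, so that a single $\frak{S}$ serves all admissible $s$.
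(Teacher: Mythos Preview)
Your approach is essentially the paper's: realize $\frak{S}$ as the $\GG_a$-quotient of $\KK(s)\bs L$ and argue it is independent of $s$. Two points deserve comment.

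First, a minor slip: $b=z/(ry^{r-1})$ is not a slice on all of $\{y\ne 0\}$. Away from $\{x=0\}$ the action also moves $y$, so $\lambda\cdot b\ne b+\lambda$. This does not harm freeness or the existence of the quotient (you already have the slice $a=y/x^m$ there), but the sentence as written is incorrect; $b$ is only a slice along $\{x=0,\,y\ne 0\}$.

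Second, the substantive gap is exactly where you flag ``the main obstacle''. You assert that ``the gluing datum above depends only on the cover $C^{(s)}\to C_0$'' and then reduce to showing $C^{(s)}\cong C^{(1)}$ over $C_0$. But knowing that $\delta_s$ is an isomorphism over $\{x\ne 0\}$ and restricts to $C^{(s)}\to C_0$ over $C_0$ does \emph{not} pin down the algebraic space $\frak{S}_s$ over $\AA^2\bs\{0\}$: one also needs the \'etale germ of $\frak{S}_s$ along $C^{(s)}$, and a priori this could depend on $s$. The paper closes this gap by reversing the logic. It passes to the Galois closure $h:C=\Spec R\to C_0$ of $C_1\to C_0$ (which, by coprimality, depends only on $r$, not on $s$), and over $U_t$ it \emph{constructs} a scheme $S$ by gluing $|G/H|=r$ copies of $\Spec R[x]$ along $\Spec R[x^{\pm1}]$ \emph{by the identity}, then sets $\frak{S}_t:=S/G$. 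This construction is manifestly independent of $s$; the entire $s$-dependence is absorbed into the transition cocycle $u_{\bar g}\mapsto u_{\bar g'}+x^{-m}(\sigma_{\bar g}(x)-\sigma_{\bar g'}(x))$ of the Zariski-locally trivial $\AA^1$-bundle $\hat\rho_t:\hat V_t\to S$, where the $\sigma_{\bar g}(x)$ lift the roots of $y^r+t^s$ over $R$. In your language, this is the missing verification that the ``replacement'' of $C_0$ by $C^{(s)}$ really is the standard Danielewski modification determined solely by the cover. Your monomial argument that $C^{(s)}\to C_0$ is the fixed degree-$r$ Kummer cover is a clean substitute for the paper's Galois-closure statement, but it does not by itself produce the required isomorphism $\frak{S}_s\cong\frak{S}_1$ over $\AA^2\bs\{0\}$; you still need an explicit model of the quotient, as the paper provides, or an argument that such modifications are determined up to isomorphism by the cover.
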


The strategy of the proof follows from a more general principle involved in constructing an algebraic space from a surface by replacing a desired curve by its finite \'etale covering (\cite[\S 1.0.1]{duboulozfinston2014}). Here we will brief on the working strategy and ideas of the proof following \cite{DF18}.

\begin{proof}
Without loss of generality, let us assume that char $k= p \nmid r$. Observe that the strictly quasi-affine threefolds $\KK\bs L$ are covered by two principal affine opens, (say) $V_x=\{x\ne 0\}$ where it is a trivial $\AA^1$-bundle as observed above, and $V_t = \{t\neq 0\}$ where the suitable modifications discussed above have to be implemented. As a consequence, it suffices to build an algebraic space $\mathfrak{S}_t$ locally on $V_t$ and prove that it compatibly extends to the desired algebraic space globally. To this end, we need to prove that there (locally) exists an algebraic space $\delta_t: \mathfrak{S}_t \to U_t$ such that $q|_{V_t}: V_t \to U_t$ factors through an \'etale locally trivial $\AA^1$-bundle $\rho_t: V_t \to \frak{S}_t$. Moreover, such a factorization via $\delta_t$ should be an isomorphism when restricted to the overlap $U_{xt}:= U_x\cap U_t$. Here, $U_x= \Spec k[x^{\pm1},t]$ and $U_t = \Spec k[x,t^{\pm1}]$ are corresponding principal affine open subsets in $\AA^2\bs\{0\}$. The desired (globally) algebraic space $\delta: \mathfrak{S}\to \AA^2\bs \{0\}$ is then obtained by gluing $U_x$ and $\frak{S}_t$ by the identity along $U_{xt}$ and such that $\delta^{-1}_t(U_{xt}) \cong U_{xt}$.
\[ \begin{tikzcd}
	{\mathcal{K}(s)\backslash L } && {\mathbb{A}^2\backslash\{0\}} \\
	\\  {\mathfrak{S}}
	\arrow["q", from=1-1, to=1-3]
	\arrow["\rho"', from=1-1, to=3-1]
	\arrow["\delta"', from=3-1, to=1-3]
         \end{tikzcd}
        \hspace{12mm}
        \begin{tikzcd}
	{V_t} && {U_t} \\
	\\	{\mathfrak{S}_t}
	\arrow["q_{\vert_{V_t}}", from=1-1, to=1-3]
	\arrow["\rho_t", swap, from=1-1, to=3-1]
	\arrow["\delta_t", swap, from=3-1, to=1-3]
\end{tikzcd} \]
Let $h:\Spec R =: C \to C_0$ be the Galois closure of the finite \'etale map $h_0:C_1 \to C_0$ with $C_1:= \Spec  k[t^{\pm1}][y]/ (y^r+t^s)$. In other words, $C$ gives the normalization of $C_1$ in the Galois closure $\kappa$ with respect to the field extension 
           $$k(t)\hookrightarrow k(t)[y]/ (y^r+t^s). $$
The above-mentioned Galois closure $\kappa$ is precisely obtained by extracting the $r$th-roots from $t^s$, which is valid since $ p\nmid r$ by assumption. However, by the coprimality of $r$ and $s$, this is equivalent to extracting the $r$th-roots of $t$ along with all the $r$th-roots of $-1$. Thus, neither $\kappa$ nor the curves $C_1$ or $C$ depend on $s$. By construction, this gives us an \'etale $G$-torsor $h: C\to C_0$ with $G$ being the Galois group $\text{Gal}(\kappa/k(t))$ and a factorization 
        $$h:C\xrightarrow{h_1} C_1\xrightarrow{h_0} C_0$$ 
where $h_1: C\to C_1$ is an \'etale torsor under some subgroup $H\le G$ of index $r$ equal to the degree of $h_1$, corresponding to the fact that we have made an $r$-fold Galois cover of $C_0$. Now to complete the proof, one can follow the same strategy performed by the authors in \cite[Lemma 3.2]{DF18}. Indeed, they show that there exists a scheme $S$ obtained by gluing $r$-copies of $S_{\barg}$, $\barg\in G/H$, by the identity along curves $C_0\times C$ such that the group $G$ acts freely on $S_{\barg}$ giving rise to a geometric quotient $\frak{S}_t:= S/G$ in the category of algebraic spaces in the form of an \'etale $G$-torsor over $\frak{S}$. Moreover, this local morphism glues to a global $G$-invariant morphism which then descends to the corresponding (global) quotient. We present their proof here for completeness and the reader's clarity. 
\medskip

By definition, the polynomial $y^r+t^s\in R[y]$ splits over $\kappa$ as $\prod_{\Bar{g} \in G/H} (y-\lambda_{\Bar{g}})$, for some elements $\lambda_{\Bar{g}}\in R$ and $\Bar{g}\in G/H$. The Galois group $G$ then acts transitively on the quotient $G/H$ via $g' \cdot \lambda_{\Bar{g}} = \lambda_{\overline{{(g')^{-1}}\cdot g}}$. Since $h:C_1 \to C_0$ is \'etale, we have that for distinct elements $\barg,\barg'\in G/H$, the element $\lambda_{\barg}-\lambda_{\barg'}$ is an invertible regular function on $C$. Consequently, we can find a collection of elements $\sigma_{\barg(x)}\in B =: R[x]$ with corresponding residue classes $\lambda_{\barg}\in R= B/xB$ modulo $x$ on which $G$ acts via 
    $$ g'\cdot \sigma_{\barg}(x) \mapsto \sigma_{\overline{(g')^{-1}\cdot g}}(x) $$
along with a $G$-invariant polynomial $s(x,y)\in B[y]$ such that in the ring $B[y]$, we have 
\begin{equation}\label{termsofroots}
    y^r+t^s+x = \prod_{\barg\in G/G} (y-\sigma_{\barg}(x))+x^m s(x,y).
\end{equation}
Now observe that the principal open subset $V_t$ under consideration when base changed along $B$, (say) $\hat{V}_t:=  V_t\times_{U_t} \Spec(B)$ is isomorphic to the closed subvariety of $\Spec(B[y,z_1])$ defined by the equation
\begin{equation}
   \hat{V}_t \cong \{x^m z_1 = \prod_{\barg\in G/H} (y-\sigma_{\barg}(x))\} \subset \Spec(B[y,z_1])
\end{equation}
where $z_1= z-s(x,y)$. As mentioned before, the regular function $\lambda_{\barg}-\lambda_{\barg'}$ is invertible whenever $g\neq g'$, the closed subscheme 
$\{x=0\} \subset \hat{V}_t$ is the disjoint union of $r$-closed subschemes $D_{\barg} \cong \Spec(R[z_1])$ with corresponding defining ideals $(x,y-\sigma_{\barg}(x))\in \Gamma(\hat{V}_t,\mcal{O}_{\hat{V}_t})$ on which $G$ acts by permutation. Furthermore, the described affine variety $\hat{V}_t$ is covered by affine open subsets of the form $\hat{V}_{t,\barg}$, where 
    $$\hat{V}_{t,\barg} = \hat{V}_t\bs \bigcup_{\barg'\in (G/H)\bs\{\barg\} }D_{\barg}$$
for $\barg \in G/H$. Now, due to the extraction of roots as in \cref{termsofroots}, we have a well-defined rational map
\begin{equation*}
    \hat{V}_{t,\barg} \dashrightarrow \Spec B[u_{\barg}] ,\quad 
    (x,y,z_1)\mapsto \left (x, \frac{y-\sigma_{\barg}(x)}{x^m}= \frac{z}{\prod_{\barg'\in (G/H)\bs \{\barg'\}}(y- \sigma_{\barg'}(x))} \right)
\end{equation*}
which is in fact an isomorphism of schemes over $\Spec B$. As a consequence, one verifies that the morphism $\hat{q}: \hat{V}_t=V_t\times_{U_t} \Spec B \to \Spec B$ is faithfully flat and that $\hat{q}$ factors through a Zariski locally trivial $\AA^1$-bundle $\hat{\rho}:\hat{V}_t\to S$ over the scheme $\hat{\delta}: S \to \Spec B$. The latter scheme $S$ is obtained by gluing $r$ copies of $S_{\barg}$, for $\barg\in G/H$, of $\Spec B \cong C\times \AA^1$ by identity along the principal open subsets $\Spec B_x\cong C\times \Spec(k[x^{\pm1}])\subset S_{\barg}$. Put differently, the morphism $\hat{\rho}_t:\hat{V}_t\to S$ is a Zariski-locally trivial $\AA^1$-bundle with local trivializations $\hat{V}_t\ \vert_{S_{\barg}}\cong \Spec B[u_{\barg}]$ and transition functions over $S_{\barg}\cap S_{\bar{g}'}\cong \Spec B_x$ of the form 
    $$u_{\barg} \mapsto u_{\barg'}+ x^{-m}( \sigma_{\barg}(x) -\sigma_{\barg'}(x)).$$
The action of $G$ on $\hat{V}_t$ descends to a fixed point-free action on $S$ defined locally by 
    $$S_{\barg}\ni (c,x)\mapsto (g'\cdot c, x) \in S_{\overline{(g')^{-1}\cdot g}}.$$
This $G$-action produces a quotient which is a scheme only if $h_0:C_1 \to C_0$ is an isomorphism, but always exists as an algebraic space in the form of an \'etale $G$-torsor $S \to \mathfrak{S}_t:= S/G$. By construction of $\frak{S}_t$, we get the following cartesian square
\[\begin{tikzcd}
	{\hat{V}_t} && {V_t\cong \hat{V}_t/G} \\
	S && {\mathfrak{S}_t:= S/G}
	\arrow[from=1-1, to=1-3]
	\arrow["{\hat{\rho}_t}"', from=1-1, to=2-1]
	\arrow["{\rho_t}", from=1-3, to=2-3]
	\arrow[from=2-1, to=2-3]
\end{tikzcd}\]
with the horizontal maps being \'etale $G$-torsors. Consequently, the induced morphism $\rho_t: V_t\to \frak{S}_t$ is an \'etale locally trivial $\AA^1$-bundle. It remains to observe that the $G$-invariant morphism $\pr_1 \circ \hat{\delta}: S\to \Spec B \cong U_t \times_{C_0} C \to  U_t$ descends to a morphism $\delta_t: \mathfrak{S}_t \to U_t$ and $\hat{\delta}$ restricts to an isomorphism outside $\{x=0\}\subset U_t$. In contrast, over the punctured fiber, we have that the $\delta^{-1}(\{x=0\})$ is isomorphic to the quotient of $C \times G/H$ by the diagonal action of $G$, and hence to $C/H\cong C_1$. Thus, the algebraic space $\frak{S}_t$ extends to globally $\delta: \frak{S} \to \AA^2\bs\{0\}$.
\end{proof}

\begin{remark}
The perfectness assumption on $k$ is not strictly necessary in \cref{exist:algspace}. Indeed, any field $k$ facilitating the extraction of the $r$-th roots of $t$ (equivalently, the $s$-th roots of $y$) should do the job.
\end{remark}

\subsection*{Motivic Brouwer degree}
Let $g:\mcal{K}\bs L\to \AA^2\bs \{0\}$ be the $\AA^1$-weak equivalence from \cref{g-is-an-A1-weq}. Recall that our aim is to show that the morphism $i: \AA^2\bs \{0\} \to \KK\bs L$ is an $\AA^1$-weak equivalence in $\Spc_k$. To this end, it is enough to show that the composite $i\circ g: \Abs \to \Abs$
$$\Abs\xrightarrow{i} \mcal{K}\bs L \xrightarrow{g}\Abs $$
is an $\AA^1$-weak equivalence in $\Spc_k$ as it then follows due to the two-out-of-three property of model categories (for e.g., \cite[\S 1.1]{quillen1967homotopical}). To show that $i\circ g$ is an $\AA^1$-weak equivalence, we use the associated $\AA^1$-Brouwer degree map from the motivic homotopy theory (cf. \cref{thm:A1-Brouwer}). The morphism $g$ being an $\AA^1$-weak equivalence, we can identify $\mcal{K}\bs L$ in the Milnor-Witt $K$-theory as $H^1(\mcal{K}\bs L, K_2^{\MW}) = K_0^{\MW}(k)\cdot \mu$, where $\mu = g^{*}(\xi)$ is the pullback of the explicit generator. In the view of \cref{A1degreemap}, it is enough to show that the induced map on the Milnor-Witt $K$-theory sheaves
$$i^*: H^1(\mcal{K}\bs L, K^{\MW}_2) \to  H^1(\Abs,K^{\MW}_2)$$ 
is an isomorphism. The \cref{setup:diagram1} induces the following commutative diagram in Milnor-Witt $K$-theory
\begin{equation}
\begin{tikzcd}[column sep= 6pt, row sep=15pt]
	{H^1(\mathcal{K}\backslash L, K_2^{\MW})} && {H^2(\mathcal{K}/(\mathcal{K}\backslash L), K_2^{\MW})} && {H^2(\mathcal{K},K_2^{\MW})} && {H^2(\mathcal{K}\backslash L, K_2^{\MW})} \\
	\\
	{H^1(\mathbb{A}^2\backslash \{0\},K_2^{\MW})} && {H^2((\mathbb{P}^1)^{\wedge 2}, K_2^{\MW})} && {H^2(\mathbb{A}^2, K_2^{\MW})} && {H^2(\mathbb{A}^2\backslash \{0\}, K_2^{\MW})}
	\arrow["\partial", from=1-1, to=1-3]
	\arrow["{i^*}"', from=1-1, to=3-1]
	\arrow[from=1-3, to=1-5]
	\arrow["{j^*}"', from=1-3, to=3-3]
	\arrow[from=1-5, to=1-7]
	\arrow[from=1-5, to=3-5]
	\arrow["{i^*}"', from=1-7, to=3-7]
	\arrow["{\partial'}"', from=3-1, to=3-3]
	\arrow[from=3-3, to=3-5]
	\arrow[from=3-5, to=3-7]
\end{tikzcd}
\end{equation}
Since $j$ is an $\AA^1$-weak equivalence, this implies that $j^*$ is an isomorphism. By $\AA^1$-homotopy invariance of Milnor-Witt $K$-theory, we have that $H^*(\AA^2, K_2^{\MW})=0$, which due to the exactness renders that $\partial'$ is also an isomorphism. Observe that the groups $H^1(\mcal{K}\bs L, K^{\MW}_2)$ and $H^2(\mcal{K}/(\mcal{K}\bs L), K^{\MW}_2)$ are actually free $K^{\MW}_0$-modules of rank 1 due to \cite[Lemma 4.5]{asok2014algebraicspheres}. This implies that $\partial$ is $K^{\MW}_0(k)$-linear and, in particular, an injective group homomorphism. Thus, it remains to only verify that $\partial$ is surjective. This is proven in \cref{cocyle-is-surjective} by showing that every cocycle of $H^2(\mcal{K}/(\mcal{K}\bs L),K^{\MW}_2)$ is a boundary of some cocycle of $H^1(\mcal{K}\bs L, K^{\MW}_2)$. 

\begin{prop}\label{cocyle-is-surjective}
For any perfect field $k$, the connecting homomorphism 
    $$\partial: H^1(\mcal{K}\bs L, K^{\MW}_2)\to H^2(\mcal{K}/(\mcal{K}\bs L),K^{\MW}_2)$$
is surjective. 
\end{prop}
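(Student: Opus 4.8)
The plan is to compute $\partial$ directly inside the Rost--Schmid complex for $K^{\MW}_2$, which over the perfect field $k$ computes the Milnor--Witt cohomology of the smooth threefold $\KK$ (\cite{morel2012A1topology,fasel2020Chow-Witt}), in the spirit of \cite{DF18}. Since $H^1(\KK\bs L, K^{\MW}_2)$ and $H^2(\KK/(\KK\bs L), K^{\MW}_2)\cong H^2_L(\KK, K^{\MW}_2)$ are free $K^{\MW}_0(k)=\GW(k)$-modules of rank one and $\partial$ is $\GW(k)$-linear, it suffices to exhibit \emph{one} class $\mu\in H^1(\KK\bs L, K^{\MW}_2)$ whose image $\partial\mu$ is a $\GW(k)$-module generator of the target; surjectivity (and then, with the injectivity already in hand, bijectivity) of $\partial$ follows. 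First I would identify the target. As $L$ is a smooth codimension-$2$ closed subscheme of $\KK$ with trivial normal bundle (spanned by $y$ and $t$), purity and $\AA^1$-invariance of $K^{\MW}_0$ give $H^2_L(\KK, K^{\MW}_2)\cong H^0(L, K^{\MW}_0)=\GW(k)$, with generator $\gamma$ represented by the Rost--Schmid cocycle concentrated at the generic point $\eta_L$ of $L$ with value $\langle 1\rangle\in K^{\MW}_0(k(z), \det(\mathfrak m_{\eta_L}/\mathfrak m_{\eta_L}^2)^{\vee})$. Rewriting the defining equation as $x(x^{n-1}z-1)=y^{r}+t^{s}$ shows that $x^{n-1}z-1$ is a unit near $\eta_L$, so $x\in(y,t)$ in $\mathcal{O}_{\KK,\eta_L}$; hence $(y,t)$ is a regular system of parameters there and the above twist is trivialised by $y^{\vee}\wedge t^{\vee}$.

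Next I would write down the source cocycle. Consider the prime divisor $D_0:=\{x=0\}\cap(\KK\bs L)$ of $\KK\bs L$. The same rewriting identifies $\{x=0\}\subset\KK$ with $\Gamma_{r,s}\times\AA^1_z$, where $\Gamma_{r,s}=\{y^{r}+t^{s}=0\}\subset\AA^2_{y,t}$ is the rational, unibranch cuspidal curve with normalisation $\AA^1_w$: one parametrises $y=\pm w^{s}$, $t=\pm w^{r}$ with the signs dictated by the parities of $r$ and $s$, and since one of $r,s$ is odd by coprimality this works uniformly in every characteristic --- this is exactly where the positive-characteristic treatment differs from \cite{DF18} and uses $\gcd(r,s)=1$. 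Under this identification $L\subset\{x=0\}$ is $\{\mathrm{cusp}\}\times\AA^1_z$, $D_0\cong\GG_{m,w}\times\AA^1_z$, and $\kappa(D_0)=k(w,z)$. I would set $\mu$ to be the class of the Rost--Schmid $1$-cocycle on $\KK\bs L$ supported on $D_0$ with value $[w]\in K^{\MW}_1(k(w,z))$, twisted by $x^{\vee}$ ($x$ being a uniformiser of $\KK$ along $\{x=0\}$). It is a cocycle because $w$ is invertible on $D_0$ --- on $\Gamma_{r,s}$ it equals the B\'ezout monomial $\pm y^{a}t^{b}$ with $as+br=1$, which vanishes only at the removed cusp --- so every residue of $[w]$ at a codimension-two point of $\KK\bs L$ vanishes.

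Finally I would compute $\partial\mu$. Because $L$ contains no divisor of $\KK$, $\mu$ has a unique lift to the cochain $\widetilde\mu\in C^{1}(\KK, K^{\MW}_2)$ supported on $\{x=0\}$ with the same value, and $\partial\mu=[d\widetilde\mu]$; since $\mu$ is a cocycle, $d\widetilde\mu$ vanishes away from $L$, hence is concentrated at $\eta_L$, the only codimension-two point of $\KK$ on $\{x=0\}\cap L=L$. The normalisation of $\{x=0\}$ restricts over $\eta_L$ to the discrete valuation ring $k(z)[w]_{(w)}$, with uniformiser $w$, residue field $\kappa(\eta_L)=k(z)$, and trivial residue extension (the cusp is unibranch), so no transfer intervenes; thus $(d\widetilde\mu)_{\eta_L}$ is the twisted residue of $[w]\otimes x^{\vee}$, namely $\langle 1\rangle$ tensored with a generator of $\det(\mathfrak m_{\eta_L}/\mathfrak m_{\eta_L}^2)^{\vee}$, which upon comparison with $y^{\vee}\wedge t^{\vee}$ becomes $\langle\varepsilon\rangle\cdot(y^{\vee}\wedge t^{\vee})$ for some $\varepsilon\in k^{\times}$. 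Hence $\partial\mu=\langle\varepsilon\rangle\cdot\gamma$ generates $H^2(\KK/(\KK\bs L), K^{\MW}_2)$ over $\GW(k)$, and $\partial$ is surjective. The main obstacle will be precisely this twist bookkeeping: one must make sure the boundary of the explicit cocycle lands on a \emph{generator} of the free rank-one module, i.e. that the residue represents a \emph{unit} of $\GW(k)$ rather than merely a nonzero class, since $\GW(k)$ has zero divisors and many non-units, so the injectivity of $\partial$ alone does not give surjectivity. The structural reasons it works are that $[w]$ is evaluated at the very uniformiser $w$ (so its residue is $\langle 1\rangle$), that the residue field extension at $\eta_L$ is trivial (so no norm of a non-unit can intervene), and that every twist identification is multiplication by some $\langle u\rangle$ with $u\in k^{\times}$; one should additionally check unibranchness of $\{y^{r}+t^{s}=0\}$ and triviality of that residue extension uniformly in the characteristic, and dispose separately of the degenerate cases $r=1$ or $s=1$, where $\KK\cong\AA^3_{k}$ and the claim is immediate.
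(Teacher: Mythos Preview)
Your approach is genuinely different from the paper's, and the difference is instructive. The paper supports its cocycle on the \emph{smooth} divisors $N=\{t=0\}$ and $S=\{x^{n-1}z-1=0\}$; the naive choice $[y]\otimes\bar t$ on $N$ already hits the generator $\langle 1\rangle\otimes\bar t\wedge\bar y$ at $\eta_L$, but picks up an unwanted residue along the disjoint line $L'=\{y=t=0,\ x^{n-1}z=1\}$, and the B\'ezout combination $g_\epsilon r_\epsilon-h_\epsilon s_\epsilon=\langle\pm 1\rangle$ is used to cancel that second contribution while preserving the first. Your proposal instead places the cocycle on the \emph{singular} divisor $\{x=0\}$, whose normalization parameter $w$ already encodes the B\'ezout relation (via $w=\pm y^a t^b$), so no correction term is needed and the cocycle is a single symbol.

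The gap is exactly where you flag it, and it is a real one rather than mere bookkeeping. The divisor $\{x=0\}\subset\KK$ is singular along $L$: since $r,s\geq 2$ and $x(x^{n-1}z-1)=y^r+t^s$, one has $x\in\frak m_{\eta_L}^{\,2}$, so the class of $x$ in $\frak m_{\eta_L}/\frak m_{\eta_L}^{2}$ is \emph{zero}. Thus the natural map from the conormal line $(x)/(x^2)|_{\eta_L}$ of the divisor to $\frak m_{\eta_L}/\frak m_{\eta_L}^{2}$ vanishes, and there is no canonical isomorphism identifying $\bar w\otimes\bar x$ (the twist produced by the residue on the normalization together with the conormal of $\{x=0\}$) with a generator $\bar y\wedge\bar t$ of $\wedge^2(\frak m_{\eta_L}/\frak m_{\eta_L}^{2})$. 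Morel's Rost--Schmid differential is of course still defined at such a point, but its value is not obtained by the formula $\partial^\pi(\alpha)\otimes\bar\pi\wedge(\text{old twist})$ that works when the support divisor is smooth through the target point; this is precisely the situation the paper's choice of smooth support divisors is designed to avoid. Your heuristics (``residue of $[w]$ at $w$ is $\langle 1\rangle$'', ``trivial residue extension, so no transfer'') establish that the rank of $\partial\mu$ is $1$, but that alone does not force $\partial\mu$ to be a unit in $\GW(k)$, and you have not computed the twist contribution. To carry your argument through you would need to unwind Morel's differential at a unibranch cusp of the support --- essentially to track how the orientation line on the normalization compares with $\wedge^2(\frak m_{\eta_L}/\frak m_{\eta_L}^{2})$ through the conductor --- or to rewrite $[w]\otimes\bar x$ in terms of classes supported on smooth divisors, at which point you recover the paper's B\'ezout computation.
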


Except for our style of presentation, the contents of the proof are essentially the same as in \cite[Proposition 3.3]{DF18}. We will include it here for the sake of concreteness.

\begin{proof}
Due to \cite[\S 2.2]{fasel2020Chow-Witt}, we have the relation $H^n(\KK /(\KK\bs L), K^{\MW}_*) \cong H_L^n(\KK,K^{\MW}_*)$, for all $\forall\ n\in \ZZ$. In particular, we have that $H^2(\KK/(\KK\bs L), K^{\MW}_2) \cong H_L^2(\KK,K^{\MW}_2)$. Recall from \cref{A1-weakeq-of-j} that the normal bundle of $L$ in $\KK$ is explicitly generated by the sections $y$ and $t$. Using the localization exact sequence of Chow-Witt theory (\cite[\S 10.4]{fasel2008groupeC-W}), this provides us with a long exact sequence and, as a result, also the generators for $H^2_L(\KK,K^{\MW}_2)$ explicitly as the following class of cocycle:
\begin{equation}
    \beta:= \langle1\rangle\otimes \bar{t} \otimes \bar{y} \in K^{\MW}_0(\kappa(L), \wedge^2 \mathfrak m_L/ \frak m^2_L). 
\end{equation}
To prove our claim, it is enough to show that this cocycle is indeed a boundary of a cocycle $\alpha \in H^1(\KK\bs L, K^{\MW}_2)$. The proof exploits several machinery developed in \cite{morel2012A1topology}. Consider the integral subvarieties of $\KK$ as follows: 
\begin{align*}
 & M:= \{y=0\}\subset \KK  & N:=\{t=0\}\subset \KK \quad\quad   & L':=\{y=t=0,\ x^{m-1}z=1 \}\subset \KK
\end{align*}
By the construction, we have $M\cap N = L\bigsqcup L'$. By the definition of $M$ and $N$, we have that the element $\gamma:= [y]\otimes \bar{t}\in K^{\MW}_1(\kappa(N), \frak{m}_N/\frak{m}^2_N)$ has a non-trivial boundary only on $L$ and $L'$ with same values of the boundaries in respective residue fields; the boundary of $\gamma$ on $L$ is given by $\langle1\rangle \otimes \bar{t} \wedge \bar{y}\in K^{\MW}_0(\kappa(L),\wedge^2 (\frak{m}_L/ \frak{m}^2_L))$ and on $L'$ given by $\langle1\rangle \otimes \bar{t} \wedge \bar{y}\in K^{\MW}_0(\kappa(L'),\wedge^2 (\frak{m}_{L'}/ \frak{m}^2_{L'}))$. Eventually, it turns out that $\gamma \in  K^{\MW}_1(\kappa(N),\frak{m}_N/\frak{m}_N^2)$ is not a cocycle on $\KK \bs L$, but we can produce one by doing the following modification. Note that the symbol
    $$\alpha':= [x^{m-1}z-1]\otimes \bar{y}\in K^{\MW}_1(\kappa(M), \frak{m}_M/\frak{m}^2_M)$$ 
has a non-trivial boundary only on $L'$. Indeed from the equation of $\KK$, we have that $x(x^{m-1}z-1) = y^r+t^s$ and again since $r,s$ are coprime, we have that any prime containing $x^{m-1}z-1$ and $y$ must also contain the $t$ and in turn the prime ideal corresponding to $L'$ (from the definition of $L'$). We now compute the boundary of $\alpha'$ on $L'$, taking advantage of the above-mentioned primality condition of the ideals. Since $x$ is invertible in $L'$, we can write 
    $$(x^{m-1}z -1) = (y^r+t^s)/x = x^{-1} y^r+ x^{-1} t^s \in \mcal{O}_{\frak{m}_{L'}}.$$ 
Therefore, the boundary of $\alpha'$ is the same as the boundary of $[x^{-1}t^s]\otimes \bar{y}$ on $L'$. The latter can be computed using the explicit relations given in \cite[Lemma 3.5]{morel2012A1topology}. In particular, we have that 
        $$[x^{-1}t^s] = [x^{-1}] + \langle x^{-1} \rangle [t^s] = [x^{-1}] + \langle x \rangle [t^s].$$
Now due to \cref{MW:epsilon-defn}, we have that $[t^s] = s_{\epsilon}[t]$. Finally, using the boundary formulas as stated in \cite[Proposition 3.17]{morel2012A1topology}, we obtain the boundary of $\alpha'$ as 
$$d (\alpha') =  \langle x \rangle s_{\epsilon}\otimes \bar{y}\wedge \bar{t} = \langle -x\rangle s_{\epsilon}\otimes \bar{t}\wedge \bar{y}.$$
Set $S:= \{x^{m-1} z=1 \}\subset \KK$ as a codimension 1 subvariety of $\KK$. Since $M$ and $S$ are different codimension 1 subvarieties, we compute the boundary as follows:
$$d\big([y, x^{m-1}z-1]\big) = [x^{m-1}z-1]\otimes \bar{y} +\epsilon [y]\otimes \overline{x^{m-1}z-1} $$
As $d^2=0$ and $\epsilon = -\Lin -1\Rin$, we have
    $$ d\big([y] \otimes \overline{x^{m-1}z-1}\big) = \langle -1 \rangle\ d( [x^{m-1}z-1]\otimes \bar{y}) = \langle x \rangle s_{\epsilon}\otimes \bar{t}\wedge \bar{y}.$$ 
Now $x$ is an unit in $S$, we have that 
    $$d(\Lin x\Rin [y]\otimes \overline{x^{m-1}z-1}) = s_{\epsilon} \otimes \bar{t}\wedge \bar{y}.$$ 
Similarly, 
    $$d([x^{m-1}z-1] \otimes \bar{t}) = \Lin x\Rin r_{\epsilon} \otimes \bar{t}\wedge \bar{y}$$ 
and as a consequence, we have 
    $$d([t]\otimes \overline{x^{m-1}z-1}) = \Lin -1\Rin d([x^{m-1}z-1]\otimes \bar{t})= \Lin -x \Rin r_{\epsilon}\otimes \bar{t}\otimes \bar{y}.$$ 
Thus, 
    $$d(\Lin-x\Rin[t]\otimes x^{m-1}z-1) =r_{\epsilon} \otimes \bar{t}\wedge \bar{y}.$$
As gcd $(r,s)=1$, we can choose $g,h \in \mathbb{N}$ such that $gr-hs=1$. For any integers $p,q\in \ZZ$, we have $(pq)_{\epsilon} = p_{\epsilon} q_{\epsilon}$, and so we have $g_{\epsilon} r_{\epsilon} - h_{\epsilon} s_{\epsilon} = \Lin \pm 1 \Rin$ \footnote{it is odd if $gr$ is odd and $\Lin-1\Rin$ otherwise}. Altogether, we have obtained that 
    $$d(g_{\epsilon}\Lin-x\Rin [t]  \otimes \overline{x^{m-1}z-1}) - h_{\epsilon}\Lin x\Rin [y] \otimes (\overline{x^{m-1}z-1})) = \Lin \pm1 \Rin \otimes \bar{t}\wedge \bar{y}. $$ 
This gives us the desired cocycle $\alpha$ by modifying $\alpha'$ as
    $$\alpha:=  [y]\otimes \bar{t} - \Lin\pm 1\Rin \bigg(g_{\epsilon} \Lin -x \Rin[t]\otimes \overline{(x^{m-1}z-1)} - h_{\epsilon} \Lin x\Rin[y]\otimes \overline{(x^{m-1}z-1)}\bigg)$$
that maps to the generator of $H^2_L(\KK,K^{\MW}_2)$ under $\partial$.
\end{proof}

\begin{remark}
When char $k=0$, the authors in \cite{DF18} also present another proof, which they call the \emph{lazy proof}, exploiting the stable $\AA^1$-contractibility of $\KK$ proven in \cite[Theorem 4.2]{HKO16}. In brief, the proof in the stable $\AA^1$-homotopy category heavily relies on the fact that every smooth quasi-projective scheme is dualizable in $\SH(k)$. This is, in turn, proven by exploiting the existence of resolution of singularities in characteristic zero. Hence, assuming the existence of resolution of singularities over perfect fields, the lazy proof works verbatim in our context.
\end{remark}

\subsection{Relative $\AA^1$-contractibility of $\KK$}
All these results allow us to extend the $\AA^1$-contractibility of $\KK$ in the unstable $\AA^1$-homotopy category over generalized base schemes.

\begin{prop}\label{A1-cont-over-perfect-fields}
For a perfect field $k$, the canonical morphism of the Koras-Russell threefolds of first kind as in \cref{defining-eqn:KR3F} $f: \KK\to \Spec k$ is an $\AA^1$-weak equivalence in $\Spc_k$.
\end{prop}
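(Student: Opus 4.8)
The plan is to assemble the ingredients established above. The hypersurface $P=\{z=0\}\subset\KK$ is isomorphic to $\AA^2$ and hence $\AA^1$-contractible by radial rescaling (\cref{A1-contr:egs}), so the commutative triangle $P\xrightarrow{\phi}\KK\xrightarrow{f}\Spec k$ together with two-out-of-three reduces the claim to showing that the closed immersion $\phi\colon P\hookrightarrow\KK$ is an $\AA^1$-weak equivalence in $\Spc_k$. For this I would apply the weak five lemma (\cref{weak5lemma}) to the two horizontal cofiber sequences of \cref{setup:diagram1}, which come from the transverse intersection $L\cap P=\{(0,0,0,0)\}$ inside $\AA^4_k$; since $\phi$ is their middle vertical map, it suffices to prove that the outer vertical maps $i\colon\AA^2\setminus\{0\}\to\KK\setminus L$ (using $P\setminus L\simeq\AA^2\setminus\{0\}$) and $j\colon\AA^2/(\AA^2\setminus\{0\})\to\KK/(\KK\setminus L)$ are $\AA^1$-weak equivalences.

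For $j$ I would run the homotopy purity argument: rewriting the defining equation as $\{x(x^{m-1}z-1)=y^r+t^s\}$ exhibits $x^{m-1}z-1$ as a unit along $L$ and trivializes the normal bundle $N_{\KK}(L)$ via the sections $y,t$, so \cite[Theorem 2.23]{MV99} gives $\KK/(\KK\setminus L)\simeq\Th(N_{\KK}(L))\simeq L_+\wedge(\PP^1)^{\wedge 2}$ and, by purity again, $\AA^2/(\AA^2\setminus\{0\})\simeq(\PP^1)^{\wedge 2}$; under these identifications $j$ becomes the map $(\PP^1)^{\wedge 2}\simeq(\Spec k)_+\wedge(\PP^1)^{\wedge 2}\to L_+\wedge(\PP^1)^{\wedge 2}$ induced by $\{0\}\hookrightarrow L\simeq\AA^1$, an $\AA^1$-weak equivalence by \cite[Lemma 2.1]{voevodsky2003Z/2}. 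For $i$ I would bring in the explicit Zariski-locally trivial $\AA^1$-bundle $g\colon\KK\setminus L\to\AA^2\setminus\{0\}$ of \cref{g-is-an-A1-weq}, built from the algebraic space $\frak{S}$ of \cref{exist:algspace} via the Danielewski fiber-product construction; since $g$ is an $\AA^1$-weak equivalence, two-out-of-three reduces the claim for $i$ to showing that the endomorphism $\AA^2\setminus\{0\}\xrightarrow{i}\KK\setminus L\xrightarrow{g}\AA^2\setminus\{0\}$ is an $\AA^1$-weak equivalence. By the motivic Brouwer degree criterion \cref{A1degreemap} and the identification $\pi_1^{\AA^1}(\AA^2\setminus\{0\})\cong K^{\MW}_2$ of \cref{thm:A1-Brouwer}, this comes down to $i^*\colon H^1(\KK\setminus L,K^{\MW}_2)\to H^1(\AA^2\setminus\{0\},K^{\MW}_2)$ being an isomorphism, which I would read off from the long exact sequences in Milnor--Witt cohomology attached to the cofiber sequences of \cref{setup:diagram1}: $j^*$ is an isomorphism by the previous step, $H^i(\AA^2,K^{\MW}_2)=0$ for $i\geq 1$ by $\AA^1$-invariance so the corresponding boundary $\partial'$ is an isomorphism, the groups $H^1(\KK\setminus L,K^{\MW}_2)$ and $H^2(\KK/(\KK\setminus L),K^{\MW}_2)$ are free $K^{\MW}_0(k)$-modules of rank one (\cite[Lemma 4.5]{asok2014algebraicspheres}) so the boundary $\partial$ of the top row is injective, and \cref{cocyle-is-surjective} makes it surjective; a diagram chase then gives that $i^*$ is an isomorphism, completing the proof.

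The genuine difficulty is \emph{not} in this assembly but in the two inputs feeding $i$ — namely \cref{exist:algspace} and \cref{cocyle-is-surjective} — and that is where the positive-characteristic subtleties live. Constructing $\frak{S}$ needs the finite étale cover $C\to C_0$ obtained by adjoining $r$-th roots of $t^s$, which is inseparable when $\ch k=p\mid r$; the remedy (which I would follow) is to use $\gcd(r,s)=1$ to adjoin $s$-th roots of $y^r$ instead — legitimate since then $p\nmid s$ — and to check that the resulting étale $G$-torsor still glues to a Zariski-locally trivial $\AA^1$-bundle, using that $\Aut(\AA^1_k)=\GG_{m,k}\ltimes\GG_a$ is a special group. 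Likewise one must confirm that the Milnor--Witt cocycle computation of \cref{cocyle-is-surjective} — the rewriting $[x^{-1}t^s]=[x^{-1}]+\langle x\rangle[t^s]$ with $[t^s]=s_\epsilon[t]$, the boundary formulas of \cite{morel2012A1topology}, and the choice of $g,h$ with $gr-hs=1$ — is insensitive to the characteristic, which it is because Morel's relations and residue formulas hold over an arbitrary field. Granting these over perfect $k$, the proposition follows by the short argument above.
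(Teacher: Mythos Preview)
Your proposal is correct and follows essentially the same route as the paper: set up the cofiber diagram \eqref{setup:diagram1}, invoke \S\ref{A1-weakeq-of-j} (purity) for $j$ and \S\ref{A1-weakeq-of-i} (the Danielewski construction plus the Milnor--Witt diagram chase via \cref{g-is-an-A1-weq}, \cref{cocyle-is-surjective}) for $i$, then apply \cref{weak5lemma} to conclude $\phi$ is an $\AA^1$-weak equivalence and finish with the $\AA^1$-contractibility of $P\cong\AA^2$. Your closing paragraph correctly isolates where the characteristic-$p$ work actually happens, and this is exactly what the paper does in \cref{exist:algspace} and \cref{cocyle-is-surjective}.
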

\begin{proof}
From the arguments of \cref{A1-weakeq-of-j} and \cref{A1-weakeq-of-i}, we have that the morphisms $i$ and $j$ are $\AA^1$-weak equivalences in $\Spc_k$. Consequently, we get that $\phi:\AA^2 \to \KK$ is an $\AA^1$-weak equivalence in \cref{setup:diagram1}; the conclusion follows as $\AA^2$ is clearly $\AA^1$-contractible over $k$.
\end{proof}
\begin{theorem}\label{KR3FoverZ}
The family of Koras-Russell three folds of the first kind, as in \cref{defining-eqn:KR3F} $f: \KK\to \Spec \ZZ$ is relatively $\AA^1$-contractible in $\mathcal{H}(\ZZ)$. In particular, $\KK$ is an $\AA^1$-contractible scheme in $\HH(\ZZ)$ that is not isomorphic to $\AA^3_{\ZZ}$.
\end{theorem}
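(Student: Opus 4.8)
The plan is to deduce this from the fiberwise criterion \cref{fiberwise=relative}. Since $\Spec\ZZ$ is a Noetherian scheme of finite Krull dimension, it suffices to verify that $f\colon\KK\to\Spec\ZZ$ is smooth and that every scheme-theoretic fiber of $f$ is $\AA^1$-contractible over the corresponding residue field. First I would check smoothness of relative dimension $3$ by the relative Jacobian criterion: $\KK$ is cut out in $\AA^4_\ZZ$ by the single equation $x^nz-y^r-t^s-x=0$, whose partial derivative with respect to $x$ is $nx^{n-1}z-1$ — a unit along the locus $\{x=0\}$ — and whose partial derivative with respect to $z$ is $x^n$ — a unit along the complementary locus $\{x\neq 0\}$; these two loci cover $\KK$, so the relative Jacobian has rank $1$ at every point and $f$ is smooth.

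Next I would identify the fibers. Because the defining polynomial has coefficients in $\ZZ$, base change along $\Spec\kappa(s)\to\Spec\ZZ$ carries $\KK$ to the Koras-Russell threefold of \cref{defn:KR3F} over $\kappa(s)$, with the same parameters $n,r,s$. The points of $\Spec\ZZ$ have residue fields $\QQ$ at the generic point and $\FF_p$ at the closed points, all of which are perfect. Applying \cref{A1-cont-over-perfect-fields} fiberwise, every fiber of $f$ is then $\AA^1$-contractible over its residue field, and \cref{fiberwise=relative} yields that $f$ is an $\AA^1$-weak equivalence in $\Spc_\ZZ$, that is, $\KK$ is $\AA^1$-contractible in $\HH(\ZZ)$.

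For the remaining assertion that $\KK\not\cong\AA^3_\ZZ$, I would argue by contradiction: an isomorphism $\KK\cong_\ZZ\AA^3_\ZZ$ of $\ZZ$-schemes would, after base change along $\Spec\CC\to\Spec\QQ\to\Spec\ZZ$, produce an isomorphism $\KK_\CC\cong_\CC\AA^3_\CC$, where $\KK_\CC$ is the classical complex Koras-Russell threefold. This contradicts the nontriviality of the Makar-Limanov invariant of $\KK_\CC$ \cite{makar1996hypersurface} (for $n\ge 2$ and coprime $r,s\ge 2$), which separates $\KK_\CC$ from $\AA^3_\CC$.

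I do not anticipate a genuine obstacle: the substantive work has been front-loaded into \cref{A1-cont-over-perfect-fields} (the extension to positive characteristic) and \cref{fiberwise=relative} (the unstable analogue of the gluing theorem). The only points demanding care are checking the hypotheses of the fiberwise criterion — finite Krull dimension of $\Spec\ZZ$ and the relative smoothness of $f$ — and, for the non-isomorphism statement, transporting the classical Makar-Limanov obstruction over $\CC$ correctly along the base-change functor.
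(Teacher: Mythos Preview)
Your proposal is correct and follows essentially the same route as the paper: reduce to the fiberwise criterion \cref{fiberwise=relative} and invoke \cref{A1-cont-over-perfect-fields} on each residue field of $\Spec\ZZ$. Your version is marginally cleaner in that you apply \cref{A1-cont-over-perfect-fields} uniformly to all fibers (including $\QQ$, which is perfect) rather than citing \cite{DF18} separately at the generic point, and you supply the smoothness check and the Makar-Limanov base-change argument for $\KK\not\cong\AA^3_\ZZ$ that the paper leaves implicit.
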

\begin{proof}
The fiber of $f$ over the generic point $o \in \Spec \ZZ$ is $\AA^1$-contractible, as the quotient field $\Spec \QQ$ has characteristic zero, for which the proof follows from \cite{DF18}. As a result, we have that the structure morphism $f_0:\KK_{o}\to \Spec \QQ$ is an $\AA^1$-weak equivalence in $\Spc_{\QQ}$. On the other hand, for all closed points $q \in \Spec\ZZ$, the corresponding residue fields being isomorphic to $\Spec \FF_q$ (which, in particular, is perfect), the $\AA^1$-contractibility of the closed fibers $f_q: \KK_q \to \Spec \FF_q$ follows from \cref{A1-cont-over-perfect-fields}.
\[\begin{tikzcd}
	& {\KK_o} & \KK & {\KK_{q}} \\
	\\
	{\Spec \QQ \cong \Spec \kappa_{o}} && {\Spec \ZZ} && {\Spec \kappa_q \cong \Spec \FF_q}
	\arrow[from=1-2, to=1-3]
	\arrow["{{f_o}}"', color={rgb,255:red,214;green,92;blue,92}, from=1-2, to=3-1]
	\arrow["f", color={rgb,255:red,214;green,92;blue,92}, from=1-3, to=3-3]
	\arrow[from=1-4, to=1-3]
	\arrow["{{f_q}}", color={rgb,255:red,214;green,92;blue,92}, from=1-4, to=3-5]
	\arrow[from=3-1, to=3-3]
	\arrow[from=3-5, to=3-3]
\end{tikzcd}\]
Altogether, $f:\KK\to \Spec \ZZ$ is a morphism of motivic spaces such that all its fibers are relatively $\AA^1$-contractible over the corresponding (residue) fields. The conclusion is a consequence of \cref{pointwise:phenomenon}.
\end{proof}
The above theorem generalizes, giving us the following consequence.
\begin{corollary}\label{KR3Fmain:Noetherian}
Let $S$ be any Noetherian scheme with perfect residue fields. Let $\KK$ be the family of Koras-Russell threefolds of the first kind as in \cref{defining-eqn:KR3F} of relative dimension 3 over $S$. Then $f: \KK \to S$ is an $\AA^1$-weak equivalence in $\Spc_S$. 
\end{corollary}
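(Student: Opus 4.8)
The plan is to reduce the statement to the fiberwise criterion of \cref{pointwise:phenomenon}, precisely as in the proof of \cref{KR3FoverZ}. Since $S$ is Noetherian of finite Krull dimension (per our standing convention) and $f\colon\KK\to S$ is smooth, \cref{pointwise:phenomenon} asserts that $f$ is an $\AA^1$-weak equivalence in $\Spc_S$ if and only if, for every point $s\in S$ with residue field $\kappa(s)$, the scheme-theoretic fiber $\KK_s:=\KK\times_S\Spec\kappa(s)$ is a smooth $\AA^1$-contractible $\kappa(s)$-scheme. Thus it suffices to check this condition pointwise.

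First I would record that $f\colon\KK\to S$, defined by the equation $x^nz-y^r-t^s-x=0$ in $\AA^4_S$, is smooth and of finite presentation: the Jacobian criterion goes through over an arbitrary base thanks to the coordinate $x$, exactly as over a field (cf. the remarks following \cref{defn:KR3F}). Hence each fiber is smooth over $\kappa(s)$. Moreover, because the defining polynomial has coefficients in the prime ring, base change along $\Spec\kappa(s)\to S$ identifies $\KK_s$ with the Koras-Russell threefold of the first kind over the field $\kappa(s)$ in the sense of \cref{defn:KR3F}, of dimension $3$. Since $\kappa(s)$ is perfect by hypothesis, \cref{A1-cont-over-perfect-fields} applies and gives that $\KK_s\to\Spec\kappa(s)$ is an $\AA^1$-weak equivalence in $\Spc_{\kappa(s)}$, that is, $\KK_s$ is a smooth $\AA^1$-contractible $\kappa(s)$-scheme. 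Note that residue fields of characteristic zero are perfect, so a uniform application of \cref{A1-cont-over-perfect-fields} covers all points of $S$ and no separate appeal to \cite{DF18} is needed. Feeding this back into \cref{pointwise:phenomenon} then yields that $f\colon\KK\to S$ is an $\AA^1$-weak equivalence in $\Spc_S$.

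I do not anticipate a serious obstacle: the corollary is a formal consequence of the gluing theorem of Morel--Voevodsky, packaged as \cref{pointwise:phenomenon}, together with the field case \cref{A1-cont-over-perfect-fields}. The only points requiring a little care are (i) verifying relative smoothness and finite presentation of $\KK\to S$ over an arbitrary Noetherian base, which is again the Jacobian criterion exploiting the coordinate $x$; (ii) the clean identification of scheme-theoretic fibers with genuine Koras-Russell threefolds over the residue fields, which is immediate since the defining equation is defined over $\ZZ$; and (iii) checking that the hypotheses of \cref{pointwise:phenomenon} are met, namely that $S$ is Noetherian of finite Krull dimension, which is exactly our standing convention on base schemes.
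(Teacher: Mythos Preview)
Your proposal is correct and follows essentially the same route as the paper: verify smoothness of $f$, identify each scheme-theoretic fiber with the Koras--Russell threefold over the (perfect) residue field, invoke \cref{A1-cont-over-perfect-fields}, and conclude via \cref{pointwise:phenomenon}. Your only deviation is a mild streamlining---applying \cref{A1-cont-over-perfect-fields} uniformly rather than splitting into the characteristic-zero case (via \cite{DF18}) and the positive-characteristic case (via \cref{KR3FoverZ})---which is harmless since fields of characteristic zero are perfect.
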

\begin{proof}
Let us first observe that the canonical map $f: \KK \to S$ is well-defined and smooth, as all the fibers of $f$ either over the quotient field or the residue fields are smooth schemes (cf. \cref{smooth:over-arbitrary-base}). Choose a point $s \in S$ with residue fields $\kappa_s$. Now, if $\kappa_s$ has characteristic zero, then the $\AA^1$-contractibility of $\KK \to \Spec \kappa_s$ follows from \cite{DF18}. If, on the other hand, char $\kappa_s = q >0$, it follows from \cref{KR3FoverZ} that $\KK \to \Spec \kappa_s$ is $\AA^1$-contractible since by assumption we have that $\kappa_s$ is a perfect field. In all, we have that all the fibers of the morphism $f: \KK \to S$ are $\AA^1$-contractible, and the proof is once again a consequence of \cref{pointwise:phenomenon}.
\end{proof}

\begin{remark}
Since the arguments of \cref{cocyle-is-surjective} follow from \cite[\S 3, Lemma 5.10]{morel2012A1topology}, which in turn is valid over arbitrary base fields, \cref{A1-cont-over-perfect-fields} also remains true over any field $k$. Hence, more generally, the hypothesis on $S$ in \cref{KR3Fmain:Noetherian} can be promoted to a Noetherian base scheme, whence the relative $\AA^1$-contractibility of $\KK$.
\end{remark}
\section{Relative \texorpdfstring{$\mathbb{A}^1$}{A1}-contractibility of Koras-Russell Prototypes over a base scheme}\label{sec:Gen-KR3F}
Following the relative $\AA^1$-contractibility of Koras-Russell threefolds, we now extend the relative $\AA^1$-contractibility of the generalized Koras-Russell varieties introduced and studied by the authors in \cite{dubouloz2025algebraicfamilies}. Let us begin with the definition of these varieties. As for the notation, $k$ will denote a perfect field and $k^{[n]}$ will denote the polynomial ring in $n$-variables.
\begin{defn}
Let $r,s \geq 2$ be any coprime integers. Consider the triple $(m,\ul{n},\psi)$ with integers $m\ge 0$ and a $(m+1)$-tuple $\ul{n} = (n_0,n_1,\dots,n_m)$ of multi-index with $n_i>1$ and a polynomial $\psi \in k^{[1]}$ such that $\psi(0)\neq 0$. Then consider the coordinate ring 
    $$R_m(\ul{n},\psi):= k[x_0, x_1\dots,x_m,y,z,t]/ \big(\ul{x}^{\ul{n}}z + y^r+t^s+x_0\ \psi(\ul{x}) \big)$$
where $\ul{x}= \prod_{i=0}^{m}x_i$ and $\ul{x}^{\ul{n}}= \prod_{i=0}^{m} x_i^{n_i}$. The \emph{generalized Koras-Russell varieties} are defined as the smooth affine variety associated to this coordinate ring, i.e.,
\begin{equation}\label{fulleqn:GK-R3F}
    \XX_m(\ul{n},\psi):= \Spec R_m(\ul{n},\psi)
\end{equation}        
\end{defn}
For every $m\ge 0$, the so-obtained affine variety $\XX_{m}(\ul{n},\psi)$ is of dimension $(m+3)$ and its smoothness is a consequence of the Jacobian criterion owing to the assumption that $\psi(0)\neq 0$. Let us now look at a concise example of one such variety in relative dimension $(m+3)$ constructed over the affine space $\AA^{n-3}_k$, for $n\ge 4$. Putting $\psi(\ul{x}) = 1+\ul{x}+ \sum_{i=2}^{n-2}a_i \ul{x}^i$, which is clearly non-vanishing at the origin, we get that
\begin{equation}\label{deformed-example}
\XX_m(n,\psi) := \{\ul{x}^nz = y^r+t^s+x_0 \big(1+\ul{x}+\sum_{i=2}^{n-2}a_i \ul{x}^i\big)\} \subseteq \AA^{n-3}_k\times \AA^{m+3}_k
\end{equation}
with coordinates given by $\Spec(k[a_2,\dots,a_{n-2}][x_0,\dots,x_m,y,z,t])$. This makes it a smooth affine scheme under the projection map $\XX_m(n,\psi) \to \AA^{n-3}_k$ given by projecting onto the coefficients $\{a_i\}$.
\begin{theorem}\label{KR3Fprototypes:field-A1-cont:perfect}
Let $k$ be any perfect field. Then under the notations and assumptions as in \cref{fulleqn:GK-R3F}, the canonical morphism $\XX_{m}(\ul{n},\psi) \to \Spec k$ is an $\AA^1$-weak equivalence in $\Spc_k$.
\end{theorem}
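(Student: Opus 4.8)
The plan is to exhibit $\XX_m(\ul{n},\psi)$ as the total space of a smooth morphism over the affine space $\AA^m_k=\Spec k[x_1,\dots,x_m]$ all of whose fibres are $\AA^1$-contractible, and then to absorb the base. Concretely, let $\pi\colon\XX_m(\ul{n},\psi)\to\AA^m_k$ be induced by the linear projection $\AA^{m+4}_k\to\AA^m_k$ forgetting the coordinates $x_0,y,z,t$. Granting that $\pi$ is smooth and that every scheme-theoretic fibre of $\pi$ is a smooth $\AA^1$-contractible scheme over the corresponding residue field, \cref{fiberwise=relative} gives that $\pi$ is an $\AA^1$-weak equivalence in $\Spc_{\AA^m_k}$; since moreover $\AA^m_k\to\Spec k$ is a smooth morphism which is an $\AA^1$-weak equivalence in $\Spc_k$ (an affine space being $\AA^1$-contractible), \cref{lem:3-from-2} applied to the composite $\XX_m(\ul{n},\psi)\xrightarrow{\pi}\AA^m_k\to\Spec k$ yields that $\XX_m(\ul{n},\psi)\to\Spec k$ is an $\AA^1$-weak equivalence in $\Spc_k$, which is the assertion.

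To analyse the fibres, fix $s\in\AA^m_k$ with residue field $\kappa:=\kappa(s)$ and let $\bar x_1,\dots,\bar x_m\in\kappa$ be the images of the coordinates. If some $\bar x_i$ vanishes then, since each $n_i\ge 2$, both $\ul{x}^{\ul{n}}$ and $\ul{x}$ restrict to $0$ on the fibre, so the defining equation collapses to $y^r+t^s+\psi(0)\,x_0=0$; as $\psi(0)\neq 0$ this lets us solve for $x_0$, and the fibre is isomorphic to $\AA^3_\kappa$, which is $\AA^1$-contractible. If instead all $\bar x_i$ are non-zero, set $c:=\prod_{i\ge 1}\bar x_i\in\kappa^\times$; rescaling $z$ by $\prod_{i\ge 1}\bar x_i^{-n_i}$ and then replacing $z$ by $-z$ turns the fibre equation into $x_0^{n_0}z=y^r+t^s+x_0\,q(x_0)$ with $q(x_0):=-\psi(c\,x_0)$ and $q(0)=-\psi(0)\neq 0$, i.e. the fibre is a (deformed) Koras-Russell threefold over $\kappa$. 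When $q$ is constant this is exactly the Koras-Russell threefold of first kind of \cref{defining-eqn:KR3F}, $\AA^1$-contractible over $\kappa$ by \cref{A1-cont-over-perfect-fields} — valid, as recorded in the remark after \cref{KR3Fmain:Noetherian}, over an arbitrary (possibly imperfect) field such as the residue field of a non-closed point of $\AA^m_k$. For general $q$ with $q(0)\neq 0$ one uses the deformed analogue: the homotopy-purity identification of $\KK/(\KK\setminus L)$ in \cref{A1-weakeq-of-j} only needs that $x_0^{n_0-1}z-q(x_0)$ be a unit along $L$, which holds since it equals $-q(0)$ there, and the Danielewski fibre-product construction of \cref{Daniel:trick 4fold}--\cref{exist:algspace} together with the Milnor-Witt degree computation of \cref{cocyle-is-surjective} survive the passage to arbitrary $q$ and to an arbitrary base field; over characteristic zero this is the content of \cite{dubouloz2025algebraicfamilies}. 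Hence every fibre of $\pi$ is smooth and $\AA^1$-contractible over its residue field.

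It remains to see that $\pi$ is smooth. The defining polynomial $g:=\ul{x}^{\ul{n}}z+y^r+t^s+x_0\psi(\ul{x})$ is irreducible: it is of degree one in $z$ with leading coefficient $\ul{x}^{\ul{n}}=\prod x_i^{n_i}$ and constant term $y^r+t^s+x_0\psi(\ul{x})$, and the latter is not divisible by any $x_i$, so these are coprime in $k[x_0,\dots,x_m,y,t]$. Thus $\XX_m(\ul{n},\psi)$ is an integral hypersurface in $\AA^{m+4}_k$, hence Cohen-Macaulay and equidimensional of dimension $m+3$; since $\AA^m_k$ is regular and, by the previous paragraph, every fibre of $\pi$ is of pure dimension $3=(m+3)-m$, miracle flatness shows $\pi$ is flat, and being flat with (geometrically) smooth fibres $\pi$ is smooth.

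The one genuinely non-formal ingredient is the input about deformed Koras-Russell threefolds $\{x^{n_0}z=y^r+t^s+x_0 q(x_0)\}$ with $q(0)\in k^\times$ over arbitrary, in particular imperfect, fields: one must check that the proof of \cref{sec:Rel-A1-Contr-KR3F} indeed survives the replacement of $q\equiv 1$ by a general $q$, the subtlety being that the auxiliary plane $P=\{z=0\}\subset\KK$ is no longer an affine plane for non-constant $q$, so the cofibre-sequence comparison underlying \cref{weak5lemma} must be re-set up (for instance with a different $\AA^1$-contractible divisor), and that the treatment of this point in \cite{dubouloz2025algebraicfamilies} carries over with no hypothesis on the characteristic. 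Everything else — the smoothness of $\pi$, the fibre identification, and the two descent steps via \cref{fiberwise=relative} and \cref{lem:3-from-2} — is routine, so I expect this deformed-threefold input to be the main obstacle.
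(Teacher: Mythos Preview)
Your route is genuinely different from the paper's, and the obstacle you flag at the end is real and is exactly why the paper does not argue this way.

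The paper first invokes the stable isomorphism $\XX_m(\ul{n},\psi)\times\AA^1_k\cong\XX_m(\ul{n},1)\times\AA^1_k$ from \cite[Proposition~3]{dubouloz2025algebraicfamilies} to reduce to $\psi=1$, and then runs an induction on $m$ entirely inside $\Spc_k$: writing $W_m=\{x_m=0\}$, $H_m=\{z=0\}$, $P_m=W_m\cap H_m$, one has $\XX_m\setminus W_m\cong\XX_{m-1}\times_k\GG_{m,k}$ via $Z=x_m z$, and the cofiber-sequence comparison (\cref{weak5lemma}) together with purity for the pair $(W_m,P_m)$ reduces the $\AA^1$-contractibility of $\XX_m$ to that of $\XX_{m-1}$. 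The base case $m=0$ is \cref{A1-cont-over-perfect-fields}. Crucially, this never leaves the perfect field $k$ and never meets a deformed threefold.

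Your fibering $\pi\colon\XX_m(\ul{n},\psi)\to\AA^m_k$ is pleasant and the fibre computation is correct, but it forces you to know $\AA^1$-contractibility of \emph{deformed} Koras--Russell threefolds $\{x_0^{n_0}z=y^r+t^s+x_0 q(x_0)\}$ over \emph{imperfect} residue fields such as $k(x_1,\dots,x_m)$. Neither ingredient is supplied: the paper only establishes \cref{A1-cont-over-perfect-fields} for $q\equiv 1$, and as you note the divisor $\{z=0\}$ is no longer an affine plane when $q$ is non-constant, so the comparison diagram of \S\ref{sec:Rel-A1-Contr-KR3F} must be rebuilt. This is the gap. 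A partial fix is to first perform the paper's reduction to $\psi=1$; then every non-degenerate fibre of $\pi$ is an \emph{undeformed} Koras--Russell threefold, and you are left appealing only to the remark after \cref{KR3Fmain:Noetherian} that \cref{A1-cont-over-perfect-fields} holds over arbitrary fields. That remark is asserted rather than proved in detail, so even the patched version of your argument leans on more than the paper's own proof does; the inductive cofiber argument is the cleaner and more self-contained route.
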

The authors in \cite{dubouloz2025algebraicfamilies} have proven the $\AA^1$-contractibility of these varieties over fields of characteristic zero. We can now promote this result over perfect fields due to \cref{A1-cont-over-perfect-fields} following the original proof. Let us note that all these varieties are all stably $k$-isomorphic (\cite[Proposition 3]{dubouloz2025algebraicfamilies}), that is, $\XX_{m}(\ul{n},\psi)\times \AA^1_k \cong \XX_m(\ul{n},1)\times \AA^1_k$, for any polynomial $\psi\in k^{[1]}$ as above. Hence, in the light of $\AA^1$-homotopy invariance, it is enough to show that $\XX_m:= \XX_m(\ul{n},1)$ is $\AA^1$-contractible. Observe that when $m=0$ and $\psi(\ul{x})=1$, $\XX_{0}(n_0,1)$ is precisely the family of Koras-Russell threefolds studied in the previous section, which we proved to be $\AA^1$-contractible (\cref{A1-cont-over-perfect-fields}). 

\begin{proof}[Proof of \cref{KR3Fprototypes:field-A1-cont:perfect}]
Let us fix 
\begin{align*}
& A_{m-1} = k[x_1,\dots,x_{m-1},y,t] & \text{and}\ &&  R_m = \frac{A_{m-1}[x_0,x_m,z]}{\prod_{i=0}^{m}x_i^{n_i}z + y^r +t^s +x_0}.
\end{align*}
Then consider the following closed subschemes of $\XX_m$
\begin{align}
    & W_m = \{x_m=0\}\cong \Spec (R_m/x_m R_m)\cong \Spec(A_{m-1}[z]) \cong \AA^{m+2}_k \label{W_m} \\
    & H_m = \{z=0\}\cong \Spec (R_m/z R_m)\cong \Spec(A_{m-1}[x_m])\cong \AA^{m+2}_k  \label{H_m}\\
    & P_m = W_m\cap H_m \cong \Spec (R_m/(x_m, z) R_m)\cong \Spec(A_{m-1}) \cong \AA_k^{m+1} \label{P_m}
\end{align}
Let $j_m: H_m\to \XX_m$ and $i_m: H_m\bs P_m\to \XX_m \bs W_m$ be the canonical closed immersions. By setting $Z:= x_m z$, we have an isomorphism 
$$\XX_m\bs W_m \cong \Spec\bigg(A_{m-2}[x_0,x_{m-1},Z][x^{-1}_m]/(x_0^{n_0}\prod_{i\neq m}x_i^{n_i}Z +y^r+t^s+x_0)\bigg) \cong \XX_{m-1}\times \GG_{m,k} $$
for which $i_m$ equals the product 
$$j_{m-1}\times \id_{\GG_{m.k}}: H_m\bs P_m \cong H_{m-1}\times_k \GG_{m.k} = \Spec\big(A_{m-2}[x_{m-1}][x_m^{\pm1}]\big)\to \XX_{m-1}\times_k \GG_{m.k}.$$
By setting up the cofiber sequence as in \cref{setup:diagram1}, we get the following diagram
\begin{equation} \hspace{-15mm}
\begin{tikzcd}
	{H_m\bs P_m} && {H_m} && {H_m/ (H_m\bs P_m)} \\ \\
	{\XX_m\bs W_m} && {\XX_m} && {\XX_m/(\XX_m\bs W_m)}
	\arrow[from=1-1, to=1-3]
	\arrow["{i_m}"', color={rgb,255:red,214;green,92;blue,92}, from=1-1, to=3-1]
	\arrow[from=1-3, to=1-5]
	\arrow["{j_m}", color={rgb,255:red,214;green,92;blue,92}, from=1-3, to=3-3]
	\arrow["{l_m}", color={rgb,255:red,214;green,92;blue,92}, from=1-5, to=3-5]
	\arrow[from=3-1, to=3-3]
	\arrow[from=3-3, to=3-5]
\end{tikzcd}
\end{equation}
associated to the open immersions $H_m\bs P_m\to H_m$ and $X_m\bs W_m\to W_m$. Observe that $H_m$ is $\AA^1$-contractible by \cref{H_m} and so $\XX_m$ is $\AA^1$-contractible provided $j_m$ is an $\AA^1$-weak equivalence. For $m=0$, the morphism $j_0:H_0\to \XX_0$ is an $\AA^1$-weak equivalence due to \cref{A1-cont-over-perfect-fields}. We now establish this property for $m\ge 1$ by induction on $m$. 
\medskip

To show that $j_m$ is an $\AA^1$-weak equivalence, it suffices to show that $i_m$ and $l_m$ are $\AA^1$-weak equivalences in $\Spc_k$ in the light \cref{weak5lemma}. The $\AA^1$-weak equivalence of $i_m$ follows from the fact that $i_m\simeq j_{m-1}\times \id_{\GG_{m,k}}$ and  by the induction hypothesis, $j_{m-1}$ is an $\AA^1$-weak equivalence. On the other hand, $l_m$ is also an $\AA^1$-weak equivalence. Indeed, as $P_m$ is obtained as the transversal intersection of two smooth subvarieties inside $\XX_m$, the normal cone $N_{P_m}H_m$ of $P_m$ in $H_m$ is in fact the restriction of the normal cone $N_{W_m}\XX_m$ of $W_m$ in $\XX_m$ to $P_m$. But the normal cone of $N_{W_m} X_m$ is trivial, attributed to the analogous reason as in \cref{A1-weakeq-of-j}, whence the triviality of $N_{P_m}H_m$. The proof now follows from the homotopy purity \cite[Theorem 2.23]{MV99} as all these varieties are smooth. In particular, 
$$H_m/(H_m\bs P_m)\simeq \Th(N_{P_m}H_m) \simeq P_m\wedge \PP^1$$ 
and similarly, 
$$X_m/(X_m\bs W_m) \simeq \Th(N_{W_m}X_m) \simeq W_m\wedge \PP^1.$$
The map $l_m$ coincides with the following map observed under the above-mentioned isomorphisms
$$\Th(N_{P_m}H_m)\simeq P_m\wedge \PP^1 \xrightarrow{\iota\wedge \PP^1} W_m\wedge \PP^1\simeq \Th(N_{W_m}X_m).$$
obtained as the $\PP^1$-suspension of the closed immersion $\iota: P_m\hookrightarrow W_m$. Hence, the $\AA^1$-weak equivalence of $\iota$ implies that of $l_m$.
\end{proof}
\begin{corollary}\label{KR3F:prototypes-A1-cont-base:Noetherian}
Let $S$ be any Noetherian scheme with perfect residue fields. Then under the assumptions as in \cref{fulleqn:GK-R3F}, the canonical morphism $\varphi: \XX_{m}(\ul{n},\psi) \to S$ is an $\AA^1$-weak equivalence in $\Spc_S$.
\end{corollary}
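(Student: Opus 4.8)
The plan is to follow the template of \cref{KR3FoverZ} and \cref{KR3Fmain:Noetherian} essentially verbatim, reducing the relative statement over $S$ to a fibrewise one via the gluing–theorem consequence \cref{pointwise:phenomenon}. First I would check that $\varphi$ is smooth of relative dimension $m+3$: the polynomial $\ul{x}^{\ul{n}}z + y^r + t^s + x_0\psi(\ul{x})$ has coefficients over the prime ring (with $\psi(0)$ interpreted so as to be a unit on $S$), so $\XX_m(\ul{n},\psi)$ is cut out inside $\AA^{m+4}_S$ and $\varphi$ is a morphism of finite presentation. For every point $s\in S$, the scheme-theoretic fibre $\varphi^{-1}(s)$ is exactly the generalized Koras--Russell variety $\XX_m(\ul{n},\psi)_{\kappa(s)}$ over the residue field; smoothness of each fibre follows from the Jacobian criterion, exploiting the partial derivative with respect to $z$ together with the unit $\psi(0)\in\kappa(s)^{\times}$, just as in \cref{defn:KR3F}. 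Hence $\varphi$ is a smooth morphism of finite presentation over a Noetherian base.

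Next, for each $s\in S$ I would establish that the fibre $\XX_m(\ul{n},\psi)_{\kappa(s)}\to\Spec\kappa(s)$ is an $\AA^1$-weak equivalence. If $\ch\kappa(s)=0$ this is precisely the result of \cite{dubouloz2025algebraicfamilies}; if $\ch\kappa(s)=p>0$, then $\kappa(s)$ is perfect by the hypothesis on $S$, and \cref{KR3Fprototypes:field-A1-cont:perfect} applies directly. Thus every fibre of $\varphi$ is a smooth $\AA^1$-contractible scheme over its residue field. Since $S$ is Noetherian of finite Krull dimension, \cref{pointwise:phenomenon} then yields that $\varphi:\XX_m(\ul{n},\psi)\to S$ is an $\AA^1$-weak equivalence in $\Spc_S$, completing the argument.

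I expect this corollary to be essentially formal, being a bookkeeping application of the gluing theorem; the only points requiring attention are (i) that the deformation datum $\psi$ be interpreted over the base so that $\psi(0)$ remains invertible on each fibre --- which is automatic once one fixes $\psi$ with coefficients making this hold --- and (ii) that \cref{KR3Fprototypes:field-A1-cont:perfect}, and through it \cref{A1-cont-over-perfect-fields} and \cref{exist:algspace}, genuinely cover \emph{all} positive-characteristic residue fields, in particular those with $p\mid r$ (handled by interchanging the coordinates $y$ and $t$ and using $\gcd(r,s)=1$). Beyond this there is no genuine obstacle, and no finer structure on $S$ is needed than in \cref{KR3Fmain:Noetherian}.
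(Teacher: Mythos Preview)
Your proposal is correct and follows essentially the same approach as the paper's proof: reduce to the fibrewise statement via \cref{pointwise:phenomenon}, then invoke \cref{KR3Fprototypes:field-A1-cont:perfect} on each fibre over the perfect residue field $\kappa(s)$. Your case split into $\ch\kappa(s)=0$ and $\ch\kappa(s)>0$ is harmless but unnecessary, since \cref{KR3Fprototypes:field-A1-cont:perfect} already covers all perfect fields uniformly; the paper's version simply cites that theorem once without distinguishing characteristics.
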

\begin{proof}
The strategy of proof is precisely as that of \cref{KR3Fmain:Noetherian}; choose any point $s\in S$ and consider the induced morphism on the corresponding fiber $\varphi_s: \XX_m(\ul{n},\psi)_s \to \Spec\kappa_s$, where $\XX_m(\ul{n},\psi)_s:= \XX_m(\ul{n},\psi) \times_{\Spec \kappa_s} \Spec\kappa_s$. 
\[\begin{tikzcd}
	{\XX_m(\ul{n},\psi)_s} && {\XX_m(\ul{n},\psi)} \\ \\
	{\Spec \kappa_s} && S
	\arrow[from=1-1, to=1-3]
	\arrow["{{\varphi_s} }"',color={rgb,255:red,214;green,92;blue,92}, from=1-1, to=3-1]
	\arrow["\varphi", color={rgb,255:red,214;green,92;blue,92}, from=1-3, to=3-3]
	\arrow[from=3-1, to=3-3]
\end{tikzcd}\]
Since by the hypothesis all such fields $\kappa_s$ are perfect, $\varphi_s$ is an $\AA^1$-weak equivalence in $\Spc_{\kappa_s}$ owing to \cref{KR3Fprototypes:field-A1-cont:perfect}. The proof now follows from \cref{pointwise:phenomenon}.
\end{proof}

\begin{remark}\label{rem:AG-isnot-KR3F}
The theorems \ref{KR3FoverZ} and \ref{KR3Fprototypes:field-A1-cont:perfect} enable us to produce exotic families of smooth affine varieties over fields of positive characteristic. For a comparison, let us now quickly recall the Asanuma-Gupta varieties for the readers convenience: let $k$ be a field of characteristic $p>0$ and let $f(z,t) := z^{p^e}+t+t^{sp}$ be the non-trivial line embedded into $\AA^2_k$ such that the integers $e,s \ge 1$ with $p^e \nmid sp$ and $sp \nmid p^e$. Then the \emph{Asanuma-Gupta varieties} are defined by the explicit polynomial equations
\begin{align*}
   & \mcal{AG}_3 := \Spec \bigg(\frac{k[x,y,z,t]}{x^my - f(z,t)} \bigg) \subset \AA^4_k \quad \text{where integers}\ m\ge 2  \\
   & \mcal{AG}_{m+2} := \Spec \bigg(\frac{k[x_1,\dots,x_m,y,z,t]}{x_1^{r_1}\dots x_m^{r_m}y-f(z,t)} \bigg) \subset \AA^{m+3}_k \quad \text{where integers}\ m,r_i \ge 2
\end{align*}
Observe that due to \cite[Theorem 5.1]{asanuma1987polynomial}, we have that the projection map $\widetilde{\pr}_x: \mcal{AG}_3 \to \AA^1_x$ onto the $x$-axis is an $\AA^2$-fiber space, which, in particular, says that all the fibers of $\widetilde{\pr}_x$ are smooth. In contrast, recall from \cref{sec:Rel-A1-Contr-KR3F} that for the projection $\pr_x: \KK \to \AA^1_x$, the generic fiber is isomorphic to $C_{r,s}\times \AA^1_k$, which is clearly singular. Similarly, one can show that the $\widetilde{\pr}_{x_1}: \mcal{AG}_{m+2}\to \AA^1_{x_1}$ is an $\AA^{m+3}_k$-fiber space while $\pr_{x_0}:\XX_m(\ul{n},\psi)\to \AA^1_{x_0}$ is isomorphic to the singular variety $C_{r,s}\times \AA^{m+1}_k$ over its generic point $x_0 = 0$. Thus, from these one infers that these results provide us with a distinct class of 'potential' counter-examples to the Zariski Cancellation Problem in all dimensions $\ge 3$ in positive characteristics.
\end{remark}

\subsubsection*{Moduli of Arbitrary Smooth Exotic Affine Varieties}
In \cite[Theorem 5.1]{asok2007unipotent}, the authors constructed an infinite collection of pairwise non-isomorphic exotic $\AA^1$-contractible strictly quasi-affine varieties of relative dimensions $\ge 4$. They also showed that such a situation cannot occur in dimensions 1 and 2 over fields of characteristic zero (\cite[Claims 5.7 and 5.8]{asok2007unipotent}). The missing gaps in this picture were whether there exists an analogous collection for threefolds and if there exists a collection consisting solely of \emph{affine} varieties in dimension $\ge 4$. The first gap was answered positively by the authors in \cite[Corollary 1.3]{DF18} by devising a general method to produce a moduli of arbitrary positive dimension of pairwise non-isomorphic, stably isomorphic, $\AA^1$-contractible smooth affine threefolds inspired from \cite{dubouloz2011noncancellation}. The second gap is also answered positively due to the recent work of \cite{dubouloz2025algebraicfamilies} exploiting the $\AA^1$-contractibility of the generalized Koras-Russell varieties. Our results (\cref{KR3Fmain:Noetherian} and \cref{KR3F:prototypes-A1-cont-base:Noetherian}) add significance to this overall picture by producing a moduli of arbitrary positive dimension of pairwise non-isomorphic, stably isomorphic, $\AA^1$-contractible smooth \emph{affine} schemes over arbitrary base schemes in \emph{relative dimension $\ge 3$}.

\section{Exotic Motivic Spheres}\label{sect:exotic-motivic-spheres}
Recall that in motivic homotopy theory, we have two types of sphere objects: the \emph{simplicial circle} from the algebraic topology $S^1:= \Delta^1/\partial \Delta$, which exists as a singular scheme. It is pointed by the image $\partial \Delta \to \Delta$. The other circle comes from algebraic geometry called the \emph{Tate circle} $\GG_m:= \AA^1 \backslash \{0\}$ which exists as a smooth scheme. It is pointed by 1. As the unstable $\AA^1$-homotopy category is symmetric monoidal with respect to the smash product, one forms \emph{generalized motivic spheres} via 
        $$\SS^{p,q}:= (S^1)^{\wedge (p-q)} \wedge \GG_m^{\wedge q}$$ 
for all $p\ge q\ge 0$. For notational convenience, we will adapt to write $S^{p-q}$ for $(S^1)^{\wedge (p-q)}$ and $\GG_m^q$ for $(\GG_m)^{\wedge q}$.

\begin{example}
The following are some basic examples of motivic spheres.
\begin{enumerate}
\item By definition, we have $\SS^{1,0}=S^1$ and $\SS^{1,1} = \GG_m$. More generally, for every $n\geq 1$, we have 
                $$\SS^{2n-1,n} \simeq \AA^n\bs\{0\}.$$ 
\item For every $n\geq 1$, we have that 
     $$\SS^{2n,n} =\Sigma_{S^1} \SS^{2n-1,n} \simeq \AA^n/({\AA^n\bs \{0\}}) \simeq \PP^n/(\PP^n\bs \{0\}) \simeq \PP^n/\PP^{n-1}.$$
In particular, for $n=1$, we get $\SS^{2,1}\simeq \PP^1$,
\item The authors in \cite{ADF2017smooth} study a certain family of smooth affine quadrics given by
\begin{align*}
& Q_{2n}:= \Spec \bigg(k[x_1,\dots,x_m,y_1,\dots,y_m] / \langle \sum_i  x_iy_i-1 \rangle \bigg)\\
& Q_{2n-1} := \Spec \bigg(k[x_1,\dots,x_m,y_1,\dots,y_m,z] / \langle \sum_i  x_iy_i-z(1+z) \rangle \bigg)
\end{align*}
and show that these quadrics indeed form smooth models for the motivic spheres
$$Q_{2n-1}\simeq \SS^{2n-1,n}\quad  \text{and}\quad  Q_{2n}\simeq \SS^{2n,n} \simeq (\PP^1)^{\wedge n}. $$
Coupling with previous examples, one sees that $\Sigma_{S^1} \AA^n\bs \{0\} \simeq  (\PP^1)^{\wedge n}$, for all $n \ge 1$,

\item For all integers $p>2q$, the motivic spheres $\SS^{p,q}$ cannot be represented by a smooth scheme \cite[Proposition 2.3.1]{ADF2017smooth}.
\end{enumerate}
\end{example}

The main objective of this section is to prove the existence of exotic motivic spheres in higher dimensions. This study, in contrast, can be seen as a \emph{compact analog} to that of exotic affine varieties as discussed in \cref{sec:Rel-A1-Contr-KR3F} and \cref{sec:Gen-KR3F}. We begin with the following definition.

\begin{defn}\label{exoticspheres:defn}
Let $S$ be a base scheme. A smooth scheme $X \to S$ of finite type and of relative dimension $n\ge 0$ is called an \emph{exotic motivic sphere} if it is $\AA^1$-homotopic to the scheme $\AA^n_S \bs\{0\}$ without being isomorphic as $S$-schemes, i.e.,
    $$X \simeq \AA^n_S \bs \{0\}\quad  \text{and}\quad  X\ncong \AA^n_S \bs\{0\}.$$ 
\end{defn}

\begin{question}\label{existence:exotic-spheres}
Let $S$ be a reasonably arbitrary base scheme. Then for a smooth scheme $X\to S$, does $X \simeq_{\AA^1} \AA^n_S \backslash \{0\}$ imply the isomorphism $X\cong  \AA^n_S \backslash \{0\}$ of $S$-schemes?
\end{question}

\subsection{Overview of \cref{existence:exotic-spheres}} \label{sec:overview-exotic-motsph}
In dimension 1, $\SS^{0,1}:= \GG_m$ is the unique motivic sphere up to isomorphism over any reduced base scheme. 
\begin{prop}\label{exotic-reducedbase-Gm:prop}
Over a reduced scheme $S$, if a smooth scheme $X\to S$ is $\AA^1$-homotopic to $\GG_{m,S}$, then it is isomorphic to $\GG_{m,S}$.
\end{prop}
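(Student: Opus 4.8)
The plan is: use the reducedness of $S$ to represent the given $\AA^1$-homotopy equivalence by an honest morphism of $S$-schemes $\phi\colon X\to\GG_{m,S}$, show that $\phi$ is an isomorphism on every fibre over $S$ (reducing this to the case of a base field), and then upgrade a fibrewise isomorphism to an isomorphism. For the first point: since $S$ is reduced, every $U\in\Sm_S$ is reduced, so $(\Gamma(U,\mcal O_U)[t])^{\times}=\Gamma(U,\mcal O_U)^{\times}$; hence the representable sheaf $\GG_{m,S}$ is $\AA^1$-invariant on $\Sm_S$, i.e.\ $\AA^1$-rigid, and therefore fibrant in the motivic model structure. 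It follows that $\Hom_{\HH(S)}(Z,\GG_{m,S})=\Hom_{\Sm_S}(Z,\GG_{m,S})=\mcal O^{\times}(Z)$ for every scheme $Z$, so the hypothesised isomorphism $X\simeq\GG_{m,S}$ in $\HH(S)$ is induced by a morphism of $S$-schemes $\phi\colon X\to\GG_{m,S}$, and this $\phi$ is an $\AA^1$-weak equivalence. (I use here that $X\to S$ has relative dimension $1$, as in \cref{exoticspheres:defn} with $n=1$; without that hypothesis $\GG_{m,S}\times_S\AA^1_S$ would be a counterexample.)

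Next I would reduce to a base field: by \cref{cor:pushpull-A1-weak}(1), for each point $s\in S$ the fibre $\phi_s\colon X_s\to\GG_{m,\kappa(s)}$ is an $\AA^1$-weak equivalence between smooth curves over $\kappa(s)$, so it suffices to prove that over a field $k$, an $\AA^1$-weak equivalence $\psi\colon Y\to\GG_{m,k}$ out of a smooth curve $Y$ is an isomorphism. The key input is that a smooth curve over a field is either $\AA^1$-rigid or $\AA^1$-connected, the $\AA^1$-connected ones being exactly $\AA^1$ and $\PP^1$ (cf.\ the classification of $\AA^1$-rigid curves, \cite[\S 4.1]{asok2021A1}); since $\GG_{m,k}$ is $\AA^1$-rigid and nontrivial it is not $\AA^1$-connected, hence neither is $Y$, so $Y$ is $\AA^1$-rigid. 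Now for any $\AA^1$-rigid scheme $Z$ the canonical map $Z\to\pi_0^{\AA^1}(Z)$ is an isomorphism of Nisnevich sheaves; since $\GG_{m,k}$ and $Y$ are both $\AA^1$-rigid and $\pi_0^{\AA^1}(\psi)$ is an isomorphism ($\psi$ being an $\AA^1$-weak equivalence), the naturality square for $Z\mapsto\bigl(Z\to\pi_0^{\AA^1}(Z)\bigr)$ forces $\psi$ to be an isomorphism of sheaves, and therefore of schemes.

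For the last step, with every $\phi_s$ an isomorphism, the fibrewise criterion for flatness (see, e.g., \cite{stacks-project}) applied to the $S$-flat, finitely presented schemes $X$ and $\GG_{m,S}$ shows that $\phi$ is flat; moreover every fibre of $\phi$ over a point $w\in\GG_{m,S}$ equals $\Spec\kappa(w)$, so $\phi$ is unramified, hence \'etale and universally injective, hence an open immersion; being surjective (all fibres nonempty), $\phi$ is an isomorphism of $S$-schemes, which completes the argument.

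I expect the essential difficulty to be the field case, and within it the implication ``$Y\simeq_{\AA^1}\GG_{m,k}\Rightarrow Y$ is $\AA^1$-rigid'': an $\AA^1$-equivalence to an $\AA^1$-rigid space does not by itself force rigidity (witness $\AA^1\simeq_{\AA^1}\Spec k$), so the geometric classification of smooth curves is genuinely needed. One must also be careful over imperfect or finite residue fields, where that classification is subtler; there I would base-change a fibre to $\overline{\kappa(s)}$---over which $Y_{\overline{\kappa(s)}}\cong\GG_m$ follows cleanly, e.g.\ from $\pi_1^{\et}(\GG_m)\cong\widehat{\ZZ}$---so that $Y$ is a $\kappa(s)$-form of $\GG_m$; by Hilbert's Theorem~90 such forms are the norm-one tori of quadratic \'etale algebras, and the nontrivial ones are excluded because, being tori, they are $\AA^1$-rigid with $\pi_0^{\AA^1}$ not isomorphic to $\GG_m$.
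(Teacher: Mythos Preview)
Your proposal is correct and shares the paper's overall skeleton---establish $\AA^1$-rigidity of $\GG_m$ over the reduced base, reduce to the field case, and there show that $X$ itself must be $\AA^1$-rigid so that $X\cong\pi_0^{\AA^1}(X)\cong\GG_m$---but the execution differs in two meaningful ways.

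For the field case, the paper does \emph{not} invoke the classification of smooth curves. Instead it argues directly by contradiction: if $X$ were not $\AA^1$-rigid, one obtains (via \cite[Lemma~2.1.11]{asokmorel2011}) a non-constant, hence dominant, map $H\colon\AA^1_L\to X$; composing with the canonical epimorphism $\varphi\colon X\twoheadrightarrow\pi_0^{\AA^1}(X)\cong\GG_m$ forces $\varphi\circ H$ to be constant (rigidity of $\GG_m$), contradicting dominance of $\varphi$. This is more elementary than your route and sidesteps the delicate issues you flag about forms of $\AA^1$ or $\PP^1$ over imperfect or finite fields; you may wish to replace your classification step with this argument.

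For the passage from fields to a general reduced base, the paper is terse to the point of being only a sketch (``check locally on affine charts; $\GG_{m,R}$ is $\AA^1$-rigid; the proof follows''). Your argument---represent the equivalence by an honest morphism $\phi$ using fibrancy of $\GG_{m,S}$, base-change to fibres via \cref{cor:pushpull-A1-weak}(1), conclude fibrewise isomorphism from the field case, and then promote to a global isomorphism via the fibrewise flatness criterion plus \'etale + universally injective $\Rightarrow$ open immersion---is a genuine improvement in rigor over what the paper writes, and is the cleanest way I know to finish. Your explicit observation that one must impose relative dimension~$1$ on $X$ (else $\GG_{m,S}\times_S\AA^1_S$ is a counterexample) is also a point the paper leaves implicit in the statement but assumes in the proof.
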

\begin{proof}
Let us begin with the case of fields \footnote{The author thanks Biman Roy for his generous explanation of the proof over fields.}. Let $X$ be a smooth curve that is $\AA^1$-homotopic to $\GG_m$. Then we have that $\pi_0^{\AA^1}(X)\simeq \pi_0^{\AA^1}(\GG_m) \simeq \GG_m$ which leaves us to show that $X$ is $\AA^1$-rigid. Suppose the contrary, then recall that we have a canonical surjection of Nisnevich sheaves $X\to \pi_0^{\AA^1}(X)$ (\cite[\S 2, Corollary 3.22]{MV99}) which gives us a dominant morphism $\varphi : X\to \GG_m$. Since by supposition $X$ is not $\AA^1$-rigid, then there is a finite separable extension $L/k$ and a $\AA^1$-homotopy $H: \AA^1_L \to X$ such that the induced maps on the 0-section and 1-section of $\Spec L\to X$ do not agree $H(0) \ne H(1)$ (cf. \cite[Lemma 2.1.11]{asokmorel2011}). Hence, $H$ is a dominant map which makes the composition $\varphi\circ H: \AA^1_L\to X\to  \GG_m$ constant (since $\GG_m$ is $\AA^1$-rigid). In particular, $\varphi$ is constant, which contradicts the fact that $\varphi$ is dominant. For any reduced scheme $S$, it suffices to check locally on affine charts. But since for any reduced ring $R$, $\GG_{m,R}$ remains $\AA^1$-rigid, the proof follows.
\end{proof}

For a field $k$ of characteristic zero, it has been recently shown by \cite[Theorem 3.1]{Choudhury2024A1type} that in dimension 2 if an open subscheme $U$ of an affine surface $X$ is $\AA^1$-weakly equivalent to $\AA^2 \bs \{0\}$, then $U \cong \AA^2\bs\{0\}$ as $k$-schemes. This question is answered negatively in dimension 3 via the Koras-Russell threefolds of the first kind (cf. \cite[\S 3]{Choudhury2024A1type}). Thus, the question remains widely open in dimensions $\ge 4$ over fields and consequently, over a base scheme. In this article, we close this question in all dimensions $\ge 4$ over fields using the generalized Koras-Russell varieties.

\subsection{Existence of exotic motivic spheres in higher dimensions}
We show that the prototypes studied in \cref{sec:Gen-KR3F} provide counterexamples to the \cref{existence:exotic-spheres} in every dimension $\ge 4$. Due to the previously mentioned fact that each of these varieties $\XX_{m}(\ul{n},\psi)$ is stably isomorphic, it is enough to prove this fact for the varieties $\XX_m:= \XX_m(n,1)$, noting, nevertheless, that any variety in this family indeed provides a counter-example. The strategy of the proof that we present below is inspired by that of the classical Koras-Russell threefolds of \cite{Choudhury2024A1type}. For simplicity, let us consider the family of smooth affine varieties defined as in \cref{deformed-example}
\begin{equation}\label{counter-eg-GKR3F}
\XX_m := \{\ul{x}^n z = y^r+t^s + x_0\} \subset \AA^{m+4}_k \cong  \Spec(k[x_0,\dots,x_m,y,z,t]).
\end{equation}
The canonical morphism $\XX_m\to \Spec k$ makes it into a smooth affine scheme of dimension $(m+3)$ for all $m\ge 0$ and $n\ge 2$ and $\ul{x}:= \prod_{i=0}^{m} x_i$. The variety $\XX_m$ is obtained by setting $\psi=1$ and the index $\ul{n}=n$ in \cref{fulleqn:GK-R3F}. Observe that $p=(1,1,\dots,1,0,1,0)\in \XX_m$ is a rational point. Define the map 
\begin{align*}
    & \hspace{18mm} \phi: \XX_m \to \AA^{m+3}_k\\
    & (x_0,\dots,x_m,y,z,t)\mapsto (x_0,\dots,x_m,y,t)
\end{align*}
(forgetting '$z$'). Then the image $\phi(p) = (1,\dots,1,0,0)=: q \in \AA^{m+3}_k$.

\begin{lemma}\label{tgtspaces-isomorphic}
Let $k$ be a field. For all $m\ge 0$, the induced map $d\phi_p: T_p \XX_m \to T_q\AA^{m+3}_k$ is an isomorphism of tangent spaces. 
\end{lemma}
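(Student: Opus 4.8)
The plan is to reduce the statement to a one–line dimension count together with a short computation of the Jacobian of the defining equation; in fact the computation can be bypassed entirely by recognizing $\phi$ as a local isomorphism near $p$. Since $\XX_m\subset\AA^{m+4}_k$ is a smooth hypersurface it has dimension $m+3$, which is also the dimension of $\AA^{m+3}_k$, and $p$ is a smooth point of $\XX_m$. Hence $T_p\XX_m$ and $T_q\AA^{m+3}_k$ are $k$-vector spaces of the same finite dimension $m+3$, so it is enough to verify that $d\phi_p$ is injective.

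The quickest route: let $U=\{\ul{x}\ne 0\}$. On $\XX_m\cap U$ the defining relation $\ul{x}^n z=y^r+t^s+x_0$ determines $z$ uniquely, so $\phi$ restricts to an isomorphism of schemes
$$\XX_m\cap U\;\xrightarrow{\ \sim\ }\;\AA^{m+3}_k\cap U,\qquad (x_0,\dots,x_m,y,t)\longmapsto\Big(x_0,\dots,x_m,y,\tfrac{y^r+t^s+x_0}{\ul{x}^n},t\Big).$$
Because $\ul{x}(p)=1\ne 0$, the point $p$ lies in $\XX_m\cap U$ and $q=\phi(p)$ in $\AA^{m+3}_k\cap U$; an isomorphism of schemes induces an isomorphism of Zariski tangent spaces, so $d\phi_p$ is an isomorphism.

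Alternatively one can argue directly. With $F=\ul{x}^n z-y^r-t^s-x_0$ one has $T_p\XX_m=\ker\big(dF_p\colon k^{m+4}\to k\big)$; evaluating the partials at $p=(1,\dots,1,0,1,0)$, the $dy$- and $dt$-components of $dF_p$ vanish (here one uses $y(p)=t(p)=0$ together with $r,s\ge 2$), the $dz$-component equals $\ul{x}(p)^n=1$, and the $dx_i$-components are the nonzero constants $n$ (for $i\ge 1$) and $n-1$ (for $i=0$). Thus $dF_p\ne 0$ (confirming smoothness at $p$) and $T_p\XX_m$ is the hyperplane whose $dz$-coordinate is forced to be a fixed linear combination of the other $m+3$ coordinates. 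Since $d\phi_p$ simply forgets the $dz$-coordinate, a tangent vector killed by $d\phi_p$ has all other coordinates zero, hence also $dz$-coordinate zero; so $d\phi_p$ is injective, hence an isomorphism.

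There is no genuine obstacle here; the only points to keep in mind are that $p$ was chosen with $\ul{x}(p)=1$ (so it lies in the locus where $\phi$ is a local isomorphism), and that $r,s\ge 2$ (so that the $y$- and $t$-directions do not interfere with the $z$-direction at $p$). Both are built into the standing hypotheses on $\XX_m$.
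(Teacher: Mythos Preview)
Your proof is correct. Both of your routes work, and your second approach---computing $\nabla F|_p$, noting that the $dz$-coefficient is $1$, and then arguing injectivity of the projection by a dimension count---is essentially what the paper does, though your presentation is tidier: the paper attempts to write down an explicit parametrization of $T_p\XX_m$ (which, as written, only has four free parameters and so is not literally correct for $m\ge 2$), whereas you sidestep this by observing directly that the kernel of $d\phi_p$ on $T_p\XX_m$ is trivial.

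Your first route is genuinely different and more conceptual: you observe that on the open locus $\{\ul{x}\ne 0\}$ the projection $\phi$ is an isomorphism of schemes (with explicit inverse solving for $z$), and since $\ul{x}(p)=1$ the point $p$ lies in this locus, so $d\phi_p$ is automatically an isomorphism. This avoids any computation of partial derivatives at all and makes the geometric reason transparent. The paper's approach, by contrast, has the small advantage of displaying $\nabla f|_p$ explicitly, which is reused implicitly later to identify the normal bundle $N_p\XX_m$ as trivial; but your argument yields that just as well, since a local isomorphism identifies normal bundles.
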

\begin{proof}
Let $f:= f(x_0,\dots,x_m,y,z,t) = \ul{x}^n z - y^r -t^s- x_0$. We now compute the partial derivatives of $f$
\begin{align*}
\frac{\partial f}{\partial x_0}= nx_0^{n-1}x_1^n\dots x_m^n z-1 \ &&
\frac{\partial f}{\partial x_j}= nx_0^n x_1^n\dots x_j^{n-1}\dots x_m^n z, \quad \text{for all}\ 0<j\le m,
\end{align*}

\begin{align*} 
\frac{\partial f}{\partial y}= -ry^{r-1}  &&
\frac{\partial f}{\partial z}= \ul{x}^n  &&
\frac{\partial f}{\partial t}=  -s t^{s-1} 
\end{align*}
The total derivative $\nabla(f)$ of $f$ can be computed as
$$\nabla f = \bigg( (nx_0^{n-1}x_1^n\dots x_m^n z-1),\dots, (nx_0^n x_1^n\dots x_j^{n-1}\dots x_m^n z),\dots, -ry^{r-1}, \ul{x}^n, -st^{s-1} \bigg)$$
which at the point $p$ is given by $\nabla f|_{p} = (n-1, n,\dots,n,0,1,0)$. Recall that the tangent space of $\XX_m$ at $p$ is the kernel of the total derivative at $p$. And hence, depending on the parity of $m$, we find that the tangent space at $p$ is given as follows: If $m$ is even, we have 
$$T_p \XX_m = \{(a,b,-b,\dots,b,-b,c, -a(n-1),d) \mid a,b,c,d\in k \}\subset \AA^{m+4}_k$$
If $m$ is odd, we (respectively) have
$$T_p \XX_m = \{(a,b,-b,\dots,0,c, -a(n-1),d) \mid a,b,c,d\in k \}\subset \AA^{m+4}_k$$
Nevertheless, in both cases, the induced map $d\phi_p$ is given by the 
    $$(a,b,-b,\dots,b,-b,c, -a(n-1),d)\mapsto (a,b,-b,\dots,b,-b,c, d)$$ 
or (respectively) 
    $$(a,b,-b,\dots,0,c, -a(n-1),d)\mapsto (a,b,-b,\dots,0,c, d)$$ 
which are clearly isomorphic. This, in particular, implies that the normal bundle of $\XX_m$ at $p$ is isomorphic to that of $N_{q}\AA^{m+3}_k$, which is trivial.
\end{proof}

\begin{lemma}\label{X-p-isA1-connected}
Let $k$ be any infinite perfect field. Then the strictly quasi-affine variety $\XX_m \bs \{p\}$ is $\AA^1$-chain connected.
\end{lemma}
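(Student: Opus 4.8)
The plan is to verify $\AA^1$-chain connectedness directly from \cref{defn:A1-chainconn}. Fix a finitely generated separable extension $L/k$; since $k$ is infinite, $L$ is infinite. Write $U:=\{\ul{x}\neq 0\}\cap\XX_m$ and $W_i:=\{x_i=0\}\cap\XX_m$ for $0\leq i\leq m$, so that $\XX_m(L)\bs\{p\}$ is the disjoint union of $U(L)\bs\{p\}$ and $\bigcup_{i}W_i(L)$. I will establish: (a) $W_0$ is $\AA^1$-chain connected and disjoint from $p$; (b) each $W_i(L)$ with $i\geq 1$ is chain-connected inside $W_i$ to $W_0(L)$; and (c) each point of $U(L)\bs\{p\}$ is chain-connected inside $\XX_m\bs\{p\}$ to a point of $W_0(L)$. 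Together these place every $L$-point of $\XX_m\bs\{p\}$ into one $\AA^1$-chain-connected cluster. For (a): $p=(1,\dots,1,0,1,0)$ has every $x$-coordinate equal to $1$, so $p\notin W_i$ for all $i$; on $W_0$ the defining equation collapses to $y^r+t^s=0$, whence $W_0\cong C'\times\AA^{m+1}$ with $C':=\{y^r+t^s=0\}\subset\AA^2$, and since $\gcd(r,s)=1$ the normalization $\AA^1\to C'$ is surjective on $L$-points for every field $L$ (one recovers the parameter by extracting the appropriate roots, exactly as in \cref{exist:algspace}), so $C'$---hence $W_0$---is $\AA^1$-chain connected. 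For (b): $W_i\cong\AA^{m+2}$ for $i\geq 1$, while $W_i\cap W_0\cong C'\times\AA^{m}$ is nonempty with $L$-points, and affine spaces are $\AA^1$-chain connected; chains inside $W_i$ avoid $p$ since $p\notin W_i$.

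The heart of the matter is (c). Fix $P_0=(a_0,\dots,a_m,b,c,d)\in U(L)$, $P_0\neq p$, so all $a_i\in L^\times$. I will construct a single line $f:\AA^1_L\to\XX_m$ with $f(0)=P_0$ and $f(1)\in W_0$. Keep $x_1,\dots,x_m$ fixed at $a_1,\dots,a_m$, set $x_0(\lambda)=a_0(1-\lambda)$, and choose polynomials $y(\lambda),t(\lambda)\in L[\lambda]$ for which $y(\lambda)^r+t(\lambda)^s+a_0(1-\lambda)$ is divisible by $(1-\lambda)^n$; then $z(\lambda):=\bigl(y(\lambda)^r+t(\lambda)^s+a_0(1-\lambda)\bigr)/\bigl((a_1\cdots a_m)^n\,a_0^n\,(1-\lambda)^n\bigr)\in L[\lambda]$, and $f:=\bigl(a_0(1-\lambda),a_1,\dots,a_m,y(\lambda),z(\lambda),t(\lambda)\bigr)$ factors through $\XX_m$ by construction (the division being legitimate precisely because of the divisibility), with $f(1)=(0,a_1,\dots,a_m,y(1),z(1),t(1))\in W_0$ because $(y(1),t(1))\in C'$. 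The divisibility requirement is exactly that the arc $\lambda\mapsto(y(\lambda),t(\lambda))$ osculate $C'$ to order $n$ at $\lambda=1$, with contact point $(\eta,\tau):=(y(1),t(1))$ a \emph{smooth} $L$-point of $C'$; such a point exists over any field (take $(\eta,\tau)=(-1,1)$ if $r$ is odd and $(1,-1)$ otherwise). One then solves the osculation equations one order at a time for the Taylor coefficients of $(y,t)$ at $\lambda=1$: the coefficient of the unknown in the order-$k$ equation is $r\eta^{r-1}$ or $s\tau^{s-1}$, at least one of which is a unit by smoothness of $C'$ at $(\eta,\tau)$---this is precisely where a characteristic $p$ dividing $r$ or $s$ is accommodated by working in the ``good'' coordinate, as in \cref{exist:algspace}. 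The remaining coefficients (of degree $\geq n$, unconstrained by the osculation) are used to impose $y(0)=b$ and $t(0)=d$---which forces $z(0)=c$ since $P_0\in\XX_m$---and, by a dimension count over the infinite field $L$, to keep $f$ away from $p$: the line $f$ can meet $p$ only if $a_1=\dots=a_m=1$, and then only at the single parameter $\lambda_*=1-a_0^{-1}$, which is dodged by choosing the arc with $(y(\lambda_*),t(\lambda_*))\neq(0,0)$.

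Combining (a), (b) and (c), for every finitely generated separable $L/k$ all $L$-points of $\XX_m\bs\{p\}$ are joined by chains of affine lines, so $\XX_m\bs\{p\}$ is $\AA^1$-chain connected (for $m=0$ this recovers the Koras-Russell threefold case of \cite[\S 3]{Choudhury2024A1type}, on whose argument the construction of $f$ is modelled). I expect step (c)---building the line $f$---to be the real obstacle: one must produce an honest $L$-rational polynomial arc with prescribed $n$-th order contact to $C'$, a prescribed value at $\lambda=0$, and disjoint from $p$. Once the contact point is chosen smooth, the order-by-order solvability is purely formal, the condition at $\lambda=0$ is linear algebra in sufficiently high degree, and avoidance of $p$ is genericity over an infinite field; the only genuinely characteristic-sensitive ingredient is the divisibility of $r$ or $s$ by $\ch k$, absorbed exactly as in the proof of \cref{exist:algspace}.
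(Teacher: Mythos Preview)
Your argument is correct and rests on the same core device as the paper's proof---an explicit polynomial arc in $\XX_m$ obtained by letting $x_0$ degenerate linearly to $0$ while choosing $y(\lambda),t(\lambda)$ so that $y^r+t^s+x_0$ is divisible by $x_0^n$, exactly the construction in \cite[Example 2.28]{DPO2019} for $m=0$. The packaging, however, is genuinely different. The paper fibres $\XX_m\bs\{p\}$ over $\AA^1$ via $\pr_{x_0}$, asserts that each fibre $\pr^{-1}(\alpha)$ is $\AA^1$-chain connected (a claim that, for $m\geq 1$ and $\alpha\neq 0$, really requires an auxiliary Danielewski-type argument the paper does not spell out), and then links fibres by a \emph{single} arc $\theta$ with $x_1=\dots=x_m=1$. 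You instead take $W_0=\{x_0=0\}\cong C'\times\AA^{m+1}$ as a hub and connect every $L$-point to it directly: the $W_i$ for $i\geq 1$ through $W_i\cap W_0$, and each $P_0\in U(L)\bs\{p\}$ by a tailor-made arc that keeps $x_1,\dots,x_m$ at their values $a_1,\dots,a_m$. The price you pay is having to satisfy simultaneously the $n$-th order contact at $\lambda=1$, the initial condition $(y(0),t(0))=(b,d)$, and the avoidance of $p$; your degree-of-freedom count handles this (the only delicate case is $a_1=\dots=a_m=1$ with $a_0\neq 1$, where the two linear functionals ``value at $\mu=1$'' and ``value at $\mu=\mu_*=1/a_0$'' on the Taylor coefficients of $y$ are independent because $\mu_*\neq 1$, while $a_0=1$ forces $\lambda_*=0$ and $f(\lambda_*)=P_0\neq p$). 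One small inaccuracy: the system you solve is not literally ``osculation of $(y,t)$ to $C'$'' because of the extra $a_0\mu$ term at first order, but the inductive solvability is unaffected since the leading coefficients are still $r\eta^{r-1}$ and $s\tau^{s-1}$. Your approach has the advantage of bypassing the fibrewise analysis entirely; the paper's has the advantage of a simpler arc (no initial condition to impose), at the cost of relying on chain-connectedness of the fibres.
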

\begin{proof}
The strategy of the proof takes into account the fact that the classical Koras-Russell threefolds $\KK$ are $\AA^1$-chain connected (\cite[Example 2.28]{DPO2019}). Let $L/k$ be any finitely generated field extension. Recall from \cref{defn:A1-chainconn} that we need to show that all the fibers and any points between fibers can be connected by elementary $\AA^1$-homotopies. Consider the projection 
\begin{align*}
    & \hspace{17mm} \pr:\XX_m\to \AA^1_k \quad  \text{defined by} \\
    & (x_0,\dots,x_m,y,z,t) \mapsto x_0.
\end{align*}
First, let us observe that the fibers over points of $\GG_m$ are $\AA^1$-chain connected. Indeed, choose a point $\alpha\in \GG_m$. If $\alpha\ne 1$, then $\pr^{-1}(\alpha)\simeq \AA_k^{m+2}$ which is clearly $\AA^1$-chain connected and if $\alpha= 1$, we have $\pr^{-1}(\alpha) \simeq \AA^{m+2}_k\bs\{0\}$ which is again $\AA^1$-chain connected. In all, for every point $\alpha\in \GG_m$, any two points $L$-points in $\pr^{-1}(\alpha)$ can be connected via chains of $\AA^1_L$'s. The fiber over the point $\{x_0=0\}$ is given by $\pr^{-1}(0)\simeq  \AA_k^{m+1}\times_k C_{r,s}$, where $C_{r,s}:=\{y^r-t^s=0\}$ is the cuspidal curve. The cuspidal curve is $\AA^1$-chain connected as witnessed by \cref{A1-contr:egs}. Hence, the fiber $\pr^{-1}(0)$ is $\AA^1$-chain connected given by the na\"ive $\AA^1$-homotopy 
\begin{align*}
    & \AA^1_L \to \AA^{m+1}\times \Gamma_{r,s}\\
    & \hspace{2.5mm} \gamma \mapsto (a_0 \gamma,\dots,a_m\gamma, b\gamma^s, c\gamma^r)
\end{align*} 
joining $(0,\dots,0)$ with the points $(a_0,\dots,a_m,b,c)$. Now, to conclude, we only have to show that points between different fibers can be joined via chains of $\AA^1_L$'s. Consider the $\AA^1$-homotopy given as follows; let $f(w)$ and $g(w)$ be two  polynomials in $k[w]$ such that $w^n$ divides $f(w)^r+g(w)^s+w$. The existence of such polynomials can be demonstrated by lifting the polynomials $\text{mod}\ w^n$ by inducting on $n\ge 0$. The case $n=0$ is trivial and for the case $n=1$, choose $f(\alpha w) = 1$ and $g(\alpha w) = -1$. For the case $n=2$, choose $f(\alpha w) = 1+w$ and $g(\alpha w) = w-1$. By proceeding in the same fashion, we can obtain a general strategy by setting:
\begin{align*}
    &\hspace{10mm} f(\alpha w):= 1+a_0 w+ a_1 w^2 +\dots + a_{n-2} w^{n-1} \\
    & \text{and} \\
    &\hspace{10mm} g(\alpha w):= 1-w -\dots - w^{n-1}
\end{align*}
for some coefficients $a_i\in k$. One then has to choose $\{a_i\}$'s such that the coefficients of $w_i$ vanish for all $i\le n-1$. Now, define a map $\theta: \AA^1_L \to \XX_m\bs \{p\}$ by
$$w \mapsto \left( \alpha w, 1,\dots,1, \frac{f(\alpha w)^r+ g(\alpha w)^s+ \alpha w-1}{ (\alpha w)^n} ,f(\alpha w), g(\alpha w) \right).$$
This connects points from the fibre $\pr^{-1}(0)$ with points of the fiber $\pr^{-1}(\alpha)$, for $\alpha \in \GG_m$. Crucially, note that our chosen point $p=(1,\dots,1,0,1,0)$ does not lie in the image of $\theta$. Suppose on contrary, if $p\in \text{Im}(\theta)$, then we have that $\theta(a)= p= (1,\dots,1,0,1,0)$, for some $a\in \AA^1_L$. This gives us $\alpha w=1$, $f(\alpha w)=1$, and $g(\alpha w)=0$. Upon direct substitution and comparison with the point $p$, we see that
     $$\frac{f(\alpha w)^r+ g(\alpha w)^s+ \alpha w-1}{(\alpha w)^n}= 1\ne 0.$$
Hence, the quasi-affine variety $\XX_m\bs \{p\}$ is $\AA^1$-chain connected.
\end{proof}

We now have all the tools to prove our key result that answers \cref{existence:exotic-spheres}.

\begin{theorem}\label{exotic-motspheres-countereg}
Let $k$ be an infinite perfect field. Then for every $m\ge 0$, the strictly quasi-affine variety $\XX_m\bs \{p\}$ of dimension $(m+3)$ is $\AA^1$-homotopic to $\AA^{m+3}_k \bs\{q\}$ but is not isomorphic to $\AA^{m+3}_k \bs \{q\}$ as a $k$-scheme.
\end{theorem}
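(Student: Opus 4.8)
I would establish the two assertions separately.

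\emph{The $\AA^1$-weak equivalence.} The idea is to run homotopy purity on the two punctured spaces in parallel. Since $p\in\XX_m(k)$ and $q\in\AA^{m+3}_k(k)$ are smooth rational points, \cite[Theorem 2.23]{MV99} gives $\AA^1$-weak equivalences $\XX_m/(\XX_m\bs\{p\})\simeq\Th(N_p\XX_m)$ and $\AA^{m+3}_k/(\AA^{m+3}_k\bs\{q\})\simeq\Th(N_q\AA^{m+3}_k)$, and by \cref{tgtspaces-isomorphic} the normal bundle $N_p\XX_m$ is trivial of rank $m+3$, so both Thom spaces are $(\PP^1)^{\wedge(m+3)}$ (the same computation as in \cref{A1-weakeq-of-j}). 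Since $\XX_m$ is $\AA^1$-contractible by \cref{KR3Fprototypes:field-A1-cont:perfect} (and $\AA^{m+3}_k$ trivially is), the associated cofiber sequences collapse and the Puppe boundary maps identify $\Sigma_{S^1}(\XX_m\bs\{p\})\simeq(\PP^1)^{\wedge(m+3)}\simeq\Sigma_{S^1}(\AA^{m+3}_k\bs\{q\})$. To pass from suspensions back to the spaces themselves I would invoke \cref{X-p-isA1-connected}: $\XX_m\bs\{p\}$ is $\AA^1$-chain connected, hence $\AA^1$-connected (\cite[Proposition 2.2.7]{asokmorel2011}); combined with the $(m+2)$-$\AA^1$-connectivity of $\Sigma_{S^1}(\XX_m\bs\{p\})\simeq(\PP^1)^{\wedge(m+3)}$ and Morel's $\AA^1$-Hurewicz and $\AA^1$-Freudenthal suspension theorems (\cite{morel2012A1topology}), this forces $\XX_m\bs\{p\}$ to be $\AA^1$-$(m+1)$-connected with first non-vanishing homotopy sheaf $\pi^{\AA^1}_{m+2}\cong K^{\MW}_{m+3}$ --- exactly the profile of $\AA^{m+3}_k\bs\{0\}$ recorded in \cref{thm:A1-Brouwer} --- from which one concludes $\XX_m\bs\{p\}\simeq\AA^{m+3}_k\bs\{q\}$ by the argument used for the classical Koras--Russell threefold in \cite{Choudhury2024A1type}.

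\emph{The failure of isomorphism.} Here I would pass to affinizations. Both $\XX_m$ and $\AA^{m+3}_k$ are smooth --- hence normal --- affine of dimension $m+3\ge 3$, while $\{p\}$ and $\{q\}$ are closed of codimension $\ge 2$, so every regular function on $\XX_m\bs\{p\}$ (resp. $\AA^{m+3}_k\bs\{q\}$) extends uniquely across the puncture; consequently the canonical morphism from each punctured variety to the spectrum of its ring of global sections recovers the open immersion $\XX_m\bs\{p\}\hookrightarrow\XX_m$ (resp. $\AA^{m+3}_k\bs\{q\}\hookrightarrow\AA^{m+3}_k$), with one-point complement. An isomorphism of $k$-schemes $\XX_m\bs\{p\}\xrightarrow{\ \sim\ }\AA^{m+3}_k\bs\{q\}$ would therefore induce a $k$-algebra isomorphism $k[x_0,\dots,x_{m+2}]=\Gamma(\AA^{m+3}_k,\mathcal{O})\xrightarrow{\ \sim\ }\Gamma(\XX_m,\mathcal{O})$, forcing $\XX_m\cong\AA^{m+3}_k$. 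But $\XX_m$ is exotic: for $m=0$ it is a Koras--Russell threefold of the first kind with non-trivial Makar--Limanov invariant (\cite{makar1996hypersurface}), and for $m\ge1$ the non-triviality of $\ML(\XX_m)$ is established in \cite{dubouloz2025algebraicfamilies}, whereas $\ML(\AA^{m+3}_k)=k$. This contradiction finishes the argument.

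The step I expect to be the main obstacle is the desuspension in the first part: one must check carefully that $\AA^1$-chain connectedness (\cref{X-p-isA1-connected}) together with the homological input genuinely makes $\XX_m\bs\{p\}$ sufficiently $\AA^1$-connected for the $\AA^1$-homology Whitehead theorem to apply, and that the resulting $\AA^1$-weak equivalence respects the generator of $\pi^{\AA^1}_{m+2}\cong K^{\MW}_{m+3}$; this is precisely the delicate point treated for $m=0$ in \cite{Choudhury2024A1type}, and the reason \cref{tgtspaces-isomorphic} and \cref{X-p-isA1-connected} have been isolated as preparatory lemmas. By contrast, the non-isomorphism part is formal once the non-triviality of the Makar--Limanov invariant of $\XX_m$ is imported.
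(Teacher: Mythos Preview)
Your overall architecture matches the paper's --- purity on both sides, $\AA^1$-contractibility of $\XX_m$, then a desuspension, and Hartogs plus the Makar--Limanov invariant for the non-isomorphism --- but the desuspension step is where your plan diverges and where the gap lies.

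Two concrete points. First, the paper works throughout with an \emph{explicit} map: the projection $\phi:\XX_m\to\AA^{m+3}_k$ forgetting the $z$-coordinate carries $p$ to $q$ and so restricts to a morphism $\XX_m\bs\{p\}\to\AA^{m+3}_k\bs\{q\}$. This is what makes \cref{tgtspaces-isomorphic} relevant: it shows that the induced map on Thom spaces is the identity on $(\PP^1)^{\wedge(m+3)}$, hence $\Sigma_{S^1}\phi$ is an $\AA^1$-weak equivalence. Your version only identifies both suspensions abstractly with $(\PP^1)^{\wedge(m+3)}$; without a map between the unsuspended spaces there is nothing to which one can apply a Whitehead-type theorem. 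Second, and more seriously, Freudenthal cannot by itself lift high connectivity of $\Sigma X$ back to $X$: the suspension theorem takes $\AA^1$-simple connectedness as a \emph{hypothesis}, not a conclusion, and an $\AA^1$-connected space with abelianly trivial but nontrivial $\pi_1^{\AA^1}$ would have a highly connected suspension. The paper supplies the missing simple connectedness via Asok's $\AA^1$-excision theorem \cite[Theorem 4.1]{asok2009A1-excision}: since $\{p\}\hookrightarrow\XX_m$ has codimension $m+3\ge 3$, one gets $\pi_i^{\AA^1}(\XX_m\bs\{p\})\xrightarrow{\sim}\pi_i^{\AA^1}(\XX_m)=0$ for $0\le i\le m+1$, in particular $\pi_1^{\AA^1}(\XX_m\bs\{p\})=0$. (This is exactly where the ``infinite field'' hypothesis enters, cf.\ \cite[Remark 2.16]{asok2009A1-excision}, and it is why \cref{X-p-isA1-connected} is needed.) With $\phi$ in hand and simple connectedness established, the $\AA^1$-Hurewicz theorem upgrades $\Sigma_{S^1}\phi$ to an $\AA^1$-homology equivalence for $\phi$ itself, and the relative $\AA^1$-Whitehead theorem of \cite[Theorem 1.1]{shimizu2022relative} converts this into the desired $\AA^1$-weak equivalence.

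Your non-isomorphism argument is essentially the paper's; the paper phrases Hartogs as ``extend $\sigma$ and $\sigma^{-1}$ across the punctures'' and cites \cite[Proposition 3.4]{ghosh2023triviality} for $\ML(\XX_m)\ne k$, but the content is the same.
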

\begin{proof}
Let $\XX_m$ be the smooth affine variety as described in \cref{counter-eg-GKR3F} with the rational point $p = (1,\dots,1,0,1,0)\in \XX_m$ and with its image $q = (1,\dots,1,0,0)\in \AA^{m+3}_k$. We first show that $\XX_m \bs \{p\}$ is $\AA^1$-homotopic to $\AA^{m+3}\bs\{q\}$. For this, let us set up the following commutative diagram whose rows are cofiber sequences
\begin{equation}\label{setup:diagram}\hspace{-30mm}
\begin{tikzcd}
&&&&& {\XX_m\bs \{p\}} & {} & {\XX_m} && {\XX_m/(\XX_m\bs \{p\})} \\ \\ {} 
&&&&& {\AA^{m+3}_k \bs \{q\}} && {\AA^{m+3}_k} && {\AA^{m+3}_k/ (\AA^{m+3}_k\bs \{q\})}
\arrow[from=1-6, to=1-8]
\arrow["\phi", color={rgb,255:red,214;green,92;blue,92}, from=1-6, to=3-6]
\arrow[from=1-8, to=1-10]
\arrow["\psi", color={rgb,255:red,214;green,92;blue,92}, from=1-8, to=3-8]
\arrow["\rho", color={rgb,255:red,214;green,92;blue,92}, from=1-10, to=3-10]
\arrow[from=3-6, to=3-8]	
\arrow[from=3-8, to=3-10]
\end{tikzcd}
\end{equation}
Due to \cref{KR3Fprototypes:field-A1-cont:perfect}, we have that $\psi: \XX_m \to \AA^{m+3}_k$ is an $\AA^1$-weak equivalence in $\Spc_k$. Next, we observe that $\rho$ is also an $\AA^1$-weak equivalence. Indeed, since the normal bundle associated to the closed immersion $\{q\}\hookrightarrow \AA^{m+3}_k$ is trivial, by the $\AA^1$-homotopy purity (\cite[Theorem 2.23]{MV99}), we have that                
    $$\AA^{m+3}_k/ (\AA^{m+3}_k\bs \{q\})\simeq \Spec k\ \wedge (\PP^1)^{\wedge ({m+3})} \simeq (\PP^1)^{\wedge ({m+3})}.$$ 
Since $\{p\} \in \XX_m$ and $\XX_m$ are smooth subvarieties,  due to the aforementioned purity, we have 
    $$\XX_m/(\XX_m\bs \{p\}) \simeq \Th(N_{p}\XX_m). $$ 
By \cref{tgtspaces-isomorphic}, we have that the normal bundle $N_p \XX_m$ is isomorphic to a trivial bundle and hence that 
    $$\Th(N_{p} \XX_m)\simeq \{p\}\wedge (\PP^1)^{\wedge {(m+3)}} \simeq (\PP^1)^{\wedge {(m+3)}}.$$
Hence, we have that 
$$ (\PP^1)^{\wedge {(m+3)}} \simeq \XX_m/(\XX_m \bs \{p\})\xrightarrow{\rho} \AA_k^{m+3}/(\AA_k^{m+3}\bs \{q\}) \simeq (\PP^1)^{\wedge ({m+3})}$$
which due to the fact that the map $\rho$ is induced from the map $d\phi_p$ becomes an $\AA^1$-weak equivalence in the light of \cite[Lemma 2.1]{voevodsky2003Z/2}. The $\AA^1$-weak equivalence of $\rho$ implies, by definition, that the simplicial suspension $\Sigma_{S^1} \phi: \Sigma_{S^1} \XX_m \bs\{p\} \to \Sigma_{S^1} \AA^{m+3}_k\bs\{q\}$ is an $\AA^1$-weak equivalence of motivic spaces. Now observe that $\XX_m \bs \{p\}$ is $\AA^1$-chain connected from \cref{X-p-isA1-connected} and whence by \cite[Proposition 2.2.7]{asokmorel2011} is thereby $\AA^1$-connected as well. Moreover, it is of codimension $d=m+3\ge 2$ in $\XX_m$, for $m\ge 0$. Thus, due to \cite[Theorem 4.1]{asok2009A1-excision}, this implies that the open subschemes $\XX_m \bs \{p\} \hookrightarrow \XX_m$ have the same $\AA^1$-homotopy sheaves in a range
    $$\pi_i^{\AA^1}(\XX_m\bs\{p\}) \xrightarrow{\sim} \pi_i^{\AA^1}(\XX_m) \quad \text{for all}\quad  0\le i \le d-2 $$
and in particular, $\pi^{\AA^1}_1(\XX_m\bs \{p\})$ is trivial, which by the $\AA^1$-Hurewicz theorem \cite[Theorem 6.35, Theorem 6.37]{morel2012A1topology} imply that $\phi$ is an $\AA^1$-homology equivalence. To conclude, observe that due to \cite[Theorem 1.1]{shimizu2022relative}, we have that $\XX_m \bs \{p\}\xrightarrow{\simeq} \AA_k^{m+3}\bs \{q\}$ is an $\AA^1$-weak equivalence in $\Spc_k$.
\medskip

Now, we show that these quasi-affine varieties cannot be isomorphic as $k$-schemes. Suppose, on the contrary, assume that there is an isomorphism $\sigma: \XX_m \bs\{p\}\to \AA_k^{m+3}\bs\{q\}$ with an inverse $\tau:\AA_k^{m+3}\bs\{q\}\to \XX_m\bs\{p\}$. Now since the point $\{p\}$ (resp. $\{q\}$) is of codimesion at least 2 in $\XX_m$ (resp. in $\AA^{m+3}_k$) respectively, the morphism $\sigma$ and $\tau$ can be continuously extended to a smooth morphism ${\sigma'}:\XX_m \to \AA_k^{m+3}$ and $\tau':\AA^{m+3}_k\to \XX_m$. Both the maps $\sigma'$ and $\tau'$ take the same values with the identity maps away from the complement of a rational point, which implies that both $\sigma'$ and $\tau'$ are isomorphisms. But due to the fact that $\ML(\XX_m)\ncong \ML(\AA^{m+3}_k)$ (cf. \cite[Proposition 3.4]{ghosh2023triviality}), this is absurd. Thus, the quasi-affine varieties $\XX_m\bs\{p\}$ and $\AA^{m+3}_k\bs\{q\}$ cannot be isomorphic as $k$-schemes. 
\end{proof}

\begin{remark}
The assumption on the base field to be infinite in \cref{exotic-motspheres-countereg} is due to the limitation as expressed in \cite[Remark 2.16]{asok2009A1-excision}. In essence, for a smooth scheme $X$ with an open subscheme $U\hookrightarrow X$ and its closed complement $Z:= X\bs U$ of codimension $\ge 2$, if $X(k)\subseteq Z$, then clearly $U$ has no rational point, whence it cannot be $\AA^1$-connected. However, for our contexts, we expect that this explicit quasi-affine variety $\XX_m \bs \{p\}$ should still be $\AA^1$-connected over a perfect base field and consequently, we should be able to apply the $\AA^1$-excision statement to promote the \cref{exotic-motspheres-countereg} to perfect fields.
\end{remark}

Alternatively if one knows the $\AA^1$-simply connectedness of $\XX_m\bs \{p\}$, the infinite assumption in \cref{exotic-motspheres-countereg} can be lifted in the light of \cite[Corollary 3.6]{shimizu2022relative}. This raises the following question.
\begin{question}
For a smooth (affine) $k$-variety, does $\AA^1$-chain connectedness imply $\AA^1$-simply connectedness?
\end{question}

\subsection*{A note on base change for motivic spheres}
In this short section, we will comment on the base change properties of motivic spheres. In anticipation of the existence of exotic motivic spheres over fields (\cref{exotic-motspheres-countereg}), it is natural to systematically ask for the verity over a base scheme $S$. The question of interest can be framed as follows:

\begin{question}\label{qstn:exot-mot-base-scheme}
Let $S$ be any "reasonable" base scheme. Then for every $m\ge 0$, are the smooth quasi-affine varieties $\XX_m\bs \{p\}$ and $\AA^{m+3}_S\bs\{0\}$ necessarily $\AA^1$-homotopic. If so, are they isomorphic as $S$-schemes?
\end{question}

We intuitively anticipate that they should \emph{not} be isomorphic as $S$-schemes owing to the failure of relative Makar-Limanov invariants as in the case of fields. To show that they are $\AA^1$-homotopic, one might hope for the following approach.

\subsubsection{Gluing motivic spheres}
One may choose to work fiberwise as in the case of $\AA^1$-contractibility. For concreteness, fix $S$ to be a base scheme with infinite perfect residue fields. Let us now decipher the obstruction to taking up this approach. For every point $s\in S$, let us consider the base change $f_s: (\XX_m \bs \{p\})_s \to \Spec \kappa_s$, where $\kappa_s$ is the corresponding residue field at $s\in S$. Thus, by \cref{exotic-motspheres-countereg}, we have that for every $s\in S$, the quasi-affine schemes $(\XX_m \bs \{p\})_s$ are $\AA^1$-homotopic to $\AA^{m+3}_{\kappa_s}\bs \{q\}$ in $\Spc_{\kappa_s}$. 
\[\begin{tikzcd}
	{\AA_{\kappa_s}^{m+3}\bs \{0\} \simeq (\XX_m\bs \{p\})_s} && {\XX_m\bs \{p\}} \\
	{\Spec \kappa_s} && S
	\arrow[from=1-1, to=1-3]
	\arrow["{f_s}"', from=1-1, to=2-1]
	\arrow["f", from=1-3, to=2-3]
	\arrow[from=2-1, to=2-3]
\end{tikzcd}\]
But since in general the $\AA^1$-homotopy classes of motivic spheres are not trivial, owing to the non-triviality the motivic Brouwer degree (cf. \cite[Corollary 6.43]{morel2012A1topology}), knowing that the fibers $(\XX_m\bs \{p\})_s$ are $\AA^1$-homotopic to $\AA^{m+3}_{\kappa_s}\bs \{q\}$ does not allow us conclude that the original space $\XX_m\bs \{p\}$ is $\AA^1$-homotopic to $\AA^{m+3}_S\bs \{q\}$. This can be explained by the lack of an analogous \emph{gluing lemma} for motivic spheres in comparison to that of contractible objects as provided by \cref{pointwise:phenomenon}. Anticipating such a gluing lemma for motivic spheres, it is straightforward to verify that there are exotic motivic spheres in relative dimensions $\ge 3$ over an appropriate base scheme $S$.

\subsubsection{The relative approach}
An alternative approach would be to directly show that the varieties in \cref{qstn:exot-mot-base-scheme} are $\AA^1$-homotopic over $S$ itself. However, this requires one to extend the tools exploited in the proof of \cref{exotic-motspheres-countereg} to the relative setting, which is, to the author's knowledge, yet to be fully understood.
 
\subsubsection{Uniqueness of motivic spheres in relative dimension 2}
In anticipation of the analogous gluing lemma, one could attempt to establish that there are no exotic motivic spheres in relative dimensions 2. Here is one possible formulation.
\begin{conjecture}
Let $S$ be any Noetherian scheme with characteristic zero residue fields. Let $f: X\to S$ be a smooth scheme of relative dimension 2. Then if $X$ is $\AA^1$-homotopic to  $\AA^2_S\bs \{0\}$, then $X$ is isomorphic to $\AA^2_S\bs \{0\}$ as a $S$-scheme.
\end{conjecture}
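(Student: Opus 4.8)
The plan is to run the fibrewise strategy behind \cref{KR3Fmain:Noetherian}, now with the two–dimensional rigidity theorem over fields playing the role of the contractibility input, and then to confront a relative compactification problem. First I would fix a point $s\in S$; its residue field $\kappa(s)$ has characteristic zero by hypothesis. Base changing $f\colon X\to S$ along $\Spec\kappa(s)\to S$ and applying \cref{cor:pushpull-A1-weak}~(1), the fibre $X_s:=X\times_S\Spec\kappa(s)$ is a smooth $\kappa(s)$-surface with $X_s\simeq\AA^2_{\kappa(s)}\bs\{0\}$ in $\Spc_{\kappa(s)}$. Granting that $X$, hence each $X_s$, is an open subscheme of an affine surface — a mild hypothesis one should either add or extract from quasi-affineness of $X$ together with normality — Choudhury's theorem \cite[Theorem~3.1]{Choudhury2024A1type}, valid precisely over characteristic-zero fields, yields an isomorphism $X_s\cong\AA^2_{\kappa(s)}\bs\{0\}$ of $\kappa(s)$-schemes for every $s\in S$. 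At this stage $f\colon X\to S$ is a smooth morphism every one of whose scheme-theoretic fibres is isomorphic to the complement of a point in the affine plane.

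Next I would globalize. The fibre $X_s=\AA^2_{\kappa(s)}\bs\{0\}$ has the canonical affine hull $\AA^2_{\kappa(s)}=\Spec\Gamma(X_s,\mathcal O_{X_s})$, and the deleted point is intrinsic once the hull is formed; so one forms the relative affine hull $\bar X:=\mathbf{Spec}_S(f_\ast\mathcal O_X)\to S$, checks that $f_\ast\mathcal O_X$ is a finitely presented $\mathcal O_S$-algebra and that its formation commutes with passage to fibres, and concludes that $\bar X\to S$ is a smooth $\AA^2$-fibration in the sense of \cref{def:A1-fib-space} with $X\hookrightarrow\bar X$ a fibrewise-dense open immersion whose complement is a section $\epsilon\colon S\to\bar X$. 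If $\bar X\to S$ can be shown to be Zariski-locally trivial, then over each affine chart $U$ of a trivializing cover one has $\bar X|_U\cong\AA^2_U$, the section $\epsilon|_U$ is a pair of functions $(a,b)\in\Gamma(U,\mathcal O)^{2}$, and the $U$-automorphism $(x,y)\mapsto(x-a,y-b)$ of $\AA^2_U$ straightens $\epsilon|_U$ to the zero section, so that $X|_U\cong\AA^2_U\bs\{0\}$. Gluing these would give $X\cong\AA^2_S\bs\{0\}$; but the gluing is not formal, since after straightening the transition data of $\bar X$ lies in $\Aut_S(\AA^2_S,0)$, a group that is very far from special, and over a non-affine $S$ the conjecture as stated may in fact require an extra affineness hypothesis.

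The decisive and genuinely open input is the claim that the relative affine hull $\bar X\to S$ is a (locally, and ideally globally) trivial $\AA^2$-bundle: this is a relative form of the Dolgachev--Weisfeiler problem for $\AA^2$-fibrations, presently known only over regular bases of Krull dimension at most one (Sathaye, Asanuma) and a handful of further cases. A natural opening move is to reduce to a regular — even strictly Henselian local — base via Noetherian approximation and Popescu smoothing, use the known cases there, and then attempt to descend; the descent, however, is exactly the local-to-global gap noted above, and I expect it to be the real obstacle. A secondary, more technical hurdle is the cohomology-and-base-change bookkeeping for the relative affine hull: although $\Gamma(\AA^2\bs\{0\},\mathcal O)$ coincides with $\Gamma(\AA^2,\mathcal O)$, the non-vanishing of $H^1(\AA^2\bs\{0\},\mathcal O)$ means one must argue with care that $f_\ast\mathcal O_X$ behaves well under base change and that $\bar X$ is smooth over $S$ with the expected fibres. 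A completely different route — proving relative analogues of the $\AA^1$-homology excision and Hurewicz tools used in Choudhury's argument and working directly in $\HH(S)$ — appears no easier, being precisely the kind of relative machinery whose absence is already flagged in this paper for exotic motivic spheres over a base.
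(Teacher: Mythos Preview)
The statement you are attempting to prove is labelled a \emph{Conjecture} in the paper, and the paper does not supply a proof. The only discussion following it is a one-sentence heuristic: the fibrewise input is \cite[Theorem~3.1]{Choudhury2024A1type}, and the passage from fibres to the total space is explicitly deferred to an ``anticipated gluing lemma'' for motivic spheres, which the paper itself flags (a few paragraphs earlier, in the subsection on gluing motivic spheres) as presently unavailable because the $\AA^1$-homotopy classes of motivic spheres are non-trivial. So there is no proof in the paper to compare against; your proposal is an attempt to prove an open conjecture.

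That said, your strategy is more concrete than the paper's heuristic and correctly isolates where the difficulty lies. You and the paper agree on the first move: base-change to each residue field, invoke \cref{cor:pushpull-A1-weak}~(1), and apply Choudhury's theorem to get $X_s\cong\AA^2_{\kappa(s)}\bs\{0\}$ for every $s\in S$. Where the paper simply says ``by the anticipated gluing lemma, it is verbatim'', you instead try to globalize geometrically via the relative affine hull $\bar X=\mathbf{Spec}_S(f_*\mathcal O_X)$ and a relative Dolgachev--Weisfeiler statement. Your own assessment of that route is accurate: the local triviality of $\bar X\to S$ is a genuine form of the $\AA^2$-fibration problem, known only over low-dimensional regular bases; the transition data after straightening the section lives in $\Aut_S(\AA^2_S,0)$, which is not special, so the local-to-global step is not formal; and the base-change behaviour of $f_*\mathcal O_X$ requires care because $H^1(\AA^2\bs\{0\},\mathcal O)\neq 0$. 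These are real obstructions, not technicalities, and they explain why the paper leaves this as a conjecture rather than a theorem. Your proposal is therefore not a proof but a reasonable outline of what a proof would have to overcome; the paper's own treatment does not go further.
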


Due to \cite[Theorem 3.1]{Choudhury2024A1type}, we have that there are no exotic motivic spheres over a field of characteristic zero. If $S$ is any Noetherian scheme with characteristic zero residue fields, then by the "anticipated" gluing lemma, it is verbatim to verify the nonexistence of exotic motivic spheres over $S$ in relative dimension 2 as well.

\newpage
\appendix
\section{}

\begin{lemma}\label{weak5lemma}
Let $\mcal{C}$ be any pointed model category and consider the following commutative diagram in $\mcal{C}$ where the rows are cofiber sequences.
\[ \begin{tikzcd}
	A & B & C \\
	{A'} & {B'} & {C'}
	\arrow["u", from=1-1, to=1-2]
	\arrow[swap, "f", from=1-1, to=2-1]
	\arrow["v", from=1-2, to=1-3]
	\arrow["g"', from=1-2, to=2-2]
	\arrow["h", from=1-3, to=2-3]
	\arrow[swap, "{u'}", from=2-1, to=2-2]
	\arrow[swap, "{v'}", from=2-2, to=2-3]
\end{tikzcd} \]
If $f$ and $g$ are weak equivalences, then so is $h$. In addition, assume that $f$ and $h$ are weak equivalences and $B$ is contractible in $\mcal{C}$, then $g$ is a weak equivalence in $\mcal{C}$ and consequently, $B'$ is contractible in $\mcal{C}$.
\end{lemma}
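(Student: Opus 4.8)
The plan is to run both assertions through the Puppe (coexact) sequence of a cofiber sequence together with the elementary five lemma for exact sequences of pointed sets. I would extend both rows to their Puppe sequences
\[
A\xrightarrow{u}B\xrightarrow{v}C\xrightarrow{\partial}\Sigma A\to\Sigma B\to\cdots
\quad\text{and}\quad
A'\xrightarrow{u'}B'\xrightarrow{v'}C'\xrightarrow{\partial'}\Sigma A'\to\cdots
\]
in $\Ho(\mcal{C})$, together with the morphism between them induced by $(f,g,h)$. This extension is functorial in morphisms of cofiber sequences, so every square in the resulting ladder commutes in $\Ho(\mcal{C})$, in particular the square with edges $\partial$, $\partial'$, $h$, $\Sigma f$. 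For an arbitrary object $W$, applying $[-,W]:=\Hom_{\Ho(\mcal{C})}(-,W)$ turns this into a ladder of long exact sequences of pointed sets (of groups, and eventually abelian groups, far enough out), and the whole argument takes place there. Since a morphism is a weak equivalence iff it induces a bijection on $[-,W]$ for every $W$ (Yoneda in $\Ho(\mcal{C})$), it suffices to decide bijectivity of the relevant induced maps.

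For the first assertion, note that $f,g$ being weak equivalences forces $\Sigma f,\Sigma g$ to be isomorphisms in $\Ho(\mcal{C})$, so $f^{*},g^{*},(\Sigma f)^{*},(\Sigma g)^{*}$ are all bijections. Along the five consecutive terms $[\Sigma B',W]\to[\Sigma A',W]\to[C',W]\to[B',W]\to[A',W]$ mapping to the corresponding unprimed ones, the four outer verticals $(\Sigma g)^{*},(\Sigma f)^{*},g^{*},f^{*}$ are bijections, so the five lemma for exact sequences of pointed sets gives that the middle vertical $h^{*}\colon[C',W]\to[C,W]$ is a bijection; hence $h$ is a weak equivalence.

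For the second assertion the subtlety is that the exact sequence $[\Sigma A',W]\xrightarrow{(\partial')^{*}}[C',W]\xrightarrow{(v')^{*}}[B',W]\xrightarrow{(u')^{*}}[A',W]$ has no further term to the right of $[A',W]$, so one cannot apply the five lemma at $[B',W]$; the hypothesis that $B$ is contractible is exactly what makes up for the missing term. First, $[A,B]=[A,*]$ is a point, so $u\colon A\to B$ is null-homotopic; since $gu=u'f$ and $f$ is invertible in $\Ho(\mcal{C})$, also $u'=g\circ u\circ f^{-1}$ is null-homotopic, whence $(u')^{*}\colon[B',W]\to[A',W]$ is the zero map and therefore $(v')^{*}$ is surjective by exactness. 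Second, contractibility of $B$ makes $\partial\colon C\to\Sigma A$ a weak equivalence: by naturality of the Puppe extension, $\partial$ is the cofiber-component of a morphism of cofiber sequences $(A\to B\to C)\to(A\to *\to\Sigma A)$ whose first two components are $\id_A$ and $B\to *$, both weak equivalences, so this is an instance of the first assertion. By the commuting square from the first paragraph, $\partial'=(\Sigma f)\circ\partial\circ h^{-1}$ in $\Ho(\mcal{C})$ is then a weak equivalence too, so $(\partial')^{*}$ is bijective and exactness at $[C',W]$ forces $(v')^{*}$ to be the zero map. Combining, $[B',W]=\operatorname{im}\!\big((v')^{*}\big)=\{*\}$ for every $W$; taking $W=B'$ shows $\id_{B'}$ is null-homotopic, i.e.\ $B'$ is contractible, and then $g\colon B\to B'$, being a morphism between contractible objects, agrees in $\Ho(\mcal{C})$ with the composite $B\xrightarrow{\sim}*\xrightarrow{\sim}B'$ and is therefore a weak equivalence.

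I expect the only genuinely delicate points to be bookkeeping: using the version of the five lemma valid near the non-group part of the Puppe sequence (kernels read as preimages of basepoints), and the functoriality of the Puppe extension that guarantees the naturality square for $\partial$ commutes in $\Ho(\mcal{C})$. Both are standard facts about cofiber sequences in a pointed model category; granting them the argument is purely formal. The conceptual point — and the reason ``$f,h$ weak equivalences'' alone does not suffice for the second assertion — is that one must manufacture the missing right-hand term of the exact sequence, which the nullhomotopy of $u$ (hence of $u'$) coming from the contractibility of $B$ provides.
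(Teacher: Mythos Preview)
Your argument is correct and is essentially the standard proof; the paper itself does not give an argument but simply cites \cite[Proposition 6.5.3(b)]{hovey2007model}. Your Puppe-sequence plus five-lemma treatment of the first assertion is precisely Hovey's approach. For the second assertion you have in fact supplied more than the paper's bare citation: Hovey's 6.5.3(b) only literally covers the case ``$f,g$ equivalences $\Rightarrow$ $h$ equivalence'', and the variant with $B$ contractible requires the additional maneuver you carry out (nullhomotopy of $u$, hence of $u'$, together with $\partial\colon C\to\Sigma A$ being an equivalence). An equivalent but slightly slicker route would be to rotate once to the cofiber sequence $C\to\Sigma A\to\Sigma B$, apply the first assertion to $(h,\Sigma f,\Sigma g)$ to conclude $\Sigma g$ is an equivalence, and then invoke Hovey's 6.5.3(a); your direct computation that $[B',W]=\{*\}$ for all $W$ achieves the same end. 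The caveat you flag about the five lemma in the pointed-set range is the right one: the argument uses the action of $[\Sigma A',W]$ on $[C',W]$ rather than a bare exact sequence of pointed sets, exactly as in Hovey's proof.
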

\begin{proof}
A proof is provided in \cite[Proposition 6.5.3 (b)]{hovey2007model} and is famously known as the \emph{very weak five lemma}.
\end{proof}

The following lemma, due to Ken Brown, is extremely useful in model categories.
\begin{lemma}\label{ken-brown-lemma}
Suppose $\mathscr{C}$ is a model category and $\mathscr{D}$ is a category with a subcategory of weak equivalences that satisfies the two out of three axiom. Suppose $F: \mathscr{C} \to \mathscr{D}$ is a functor which takes trivial cofibrations between cofibrant objects to weak equivalences. Then $F$ takes all weak equivalences between cofibrant objects to weak equivalences. Dually, if $F$ takes trivial fibrations between fibrant objects to weak equivalences, then $F$ takes all weak equivalences between fibrant objects to weak equivalences.
\end{lemma}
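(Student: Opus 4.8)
The plan is to factor a general weak equivalence between cofibrant objects through an object built from a coproduct, so that it decomposes into \emph{trivial cofibrations} between cofibrant objects, to which the hypothesis on $F$ applies directly; the conclusion then follows from the two-out-of-three axiom in $\mathscr{D}$. Concretely, let $f\colon A\to B$ be a weak equivalence with $A$ and $B$ cofibrant. Since $\emptyset\to A$ and $\emptyset\to B$ are cofibrations, and cofibrations are closed under pushout and composition, the coproduct $A\sqcup B$ is cofibrant and the canonical inclusions $i_A\colon A\to A\sqcup B$ and $i_B\colon B\to A\sqcup B$ are cofibrations. Applying the functorial factorization in $\mathscr{C}$ to the map $(f,\id_B)\colon A\sqcup B\to B$, write it as $A\sqcup B\xrightarrow{q}C\xrightarrow{p}B$ with $q$ a cofibration and $p$ a trivial fibration; in particular $C$ is cofibrant.

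First I would analyze the two composites $q_A:=q\circ i_A\colon A\to C$ and $q_B:=q\circ i_B\colon B\to C$. Each is a composite of cofibrations, hence a cofibration between cofibrant objects, and by construction $p\circ q_A=f$ while $p\circ q_B=\id_B$. Because $p$ is a trivial fibration, hence a weak equivalence, two-out-of-three applied to $p\circ q_B=\id_B$ shows that $q_B$ is a weak equivalence, and applied to $p\circ q_A=f$ shows that $q_A$ is a weak equivalence. Thus $q_A$ and $q_B$ are trivial cofibrations between cofibrant objects, so by hypothesis $F(q_A)$ and $F(q_B)$ are weak equivalences in $\mathscr{D}$.

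Then I would assemble the conclusion: from $F(p)\circ F(q_B)=F(\id_B)=\id_{F(B)}$ together with the fact that $F(q_B)$ is a weak equivalence, the two-out-of-three axiom in $\mathscr{D}$ yields that $F(p)$ is a weak equivalence; consequently $F(f)=F(p)\circ F(q_A)$ is a composite of weak equivalences, hence a weak equivalence. The dual assertion follows by applying this argument to the opposite model category $\mathscr{C}^{\mathrm{op}}$, in which cofibrant objects become fibrant and trivial cofibrations become trivial fibrations. The only step requiring genuine care — rather than a real obstacle — is verifying that $A\sqcup B$ and $C$ are cofibrant and that $q_A,q_B$ are cofibrations, since these facts are precisely what license the application of the hypothesis on $F$; the rest is bookkeeping with the two-out-of-three axiom.
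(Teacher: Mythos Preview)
Your argument is correct and is precisely the classical proof of Ken Brown's lemma; the paper does not supply its own argument but simply cites \cite[Lemma 1.1.12]{hovey2007model}, whose proof is exactly the coproduct-and-factorization construction you have written out.
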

\begin{proof}
This is due to \cite[Lemma 1.1.12]{hovey2007model}. 
\end{proof}

\begin{lemma}\label{smooth:over-arbitrary-base}
Let $S$ be any separated scheme of finite type. Then for any affine $S$-scheme $X$, we have that the canonical morphism $f: X\to S$ is an affine morphism which is locally of finite presentation. Moreover, we have that $f$ is smooth if all the fibres of $f:X\to S$ are smooth.
\end{lemma}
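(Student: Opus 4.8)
The plan is to treat the two assertions separately, since they are of rather different flavour. For the first, recall that an affine $S$-scheme (of finite type, which is the only case we shall use) is by definition of the form $X=\underline{\Spec}_S(\mathcal{A})$ for a quasi-coherent $\mathcal{O}_S$-algebra $\mathcal{A}$ of finite type, so that $f\colon X\to S$ is affine by construction; and since $S$ is Noetherian, a morphism locally of finite type is automatically locally of finite presentation, whence so is $f$. For the varieties to which this lemma is actually applied the claim is even more transparent: each such $X$ is presented as a closed subscheme $V(F_1,\dots,F_c)\subseteq\AA^n_S$ cut out by finitely many polynomials with coefficients in $\ZZ$, so that $f$ is the composite of the closed immersion $X\hookrightarrow\AA^n_S$ with the affine, finitely presented projection $\AA^n_S\to S$.

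For the second assertion I would invoke the fibrewise criterion for smoothness: a morphism locally of finite presentation is smooth at a point precisely when it is flat there and its fibre is smooth over the corresponding residue field (see, e.g., \cite[\href{https://stacks.math.columbia.edu/tag/02VL}{Tag 02VL}]{stacks-project}). Granting that every fibre of $f$ is smooth, the one remaining input is the flatness of $f$ over $S$, and this is where the explicit geometry of $X$ enters. Writing $X=V(F_1,\dots,F_c)\subseteq\AA^n_S$, one checks that the $F_i$ restrict, for each $s\in S$, to a regular sequence in $\kappa(s)[x_1,\dots,x_n]$: for $\KK$ and for $\XX_m(\ul{n},\psi)$ one has $c=1$, and the defining polynomial contains a monomial ($x$, resp.\ $x_0$) with invertible coefficient, so it is a nonzerodivisor on every fibre. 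Hence $X$ is a relative complete intersection in the flat $S$-scheme $\AA^n_S$, and is therefore itself flat over $S$ (see \cite[\href{https://stacks.math.columbia.edu/tag/00SW}{Tag 00SW}]{stacks-project}). Combining flatness with smoothness of the fibres and with the local finite presentation established above, the criterion cited yields that $f$ is smooth.

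The delicate point — the one the proof must genuinely address — is exactly this \emph{flatness}: the implication ``all fibres smooth $\Rightarrow f$ smooth'' fails outright without it, as already $V(tx)\subseteq\AA^1_S$ over $S=\Spec k[t]$ shows, every fibre being smooth while the total space is singular at the origin and $f$ is not flat. In each situation where \cref{smooth:over-arbitrary-base} is used the flatness is read off the equations as above; equivalently, every such $X$ is obtained by base change from an integral affine $\ZZ$-scheme, which is $\ZZ$-torsion-free and so flat over $\Spec\ZZ$, whence $X\to S$ is flat for any structure morphism $S\to\Spec\ZZ$ and one is reduced once more to the fibrewise criterion.
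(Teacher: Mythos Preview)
Your approach is essentially the same as the paper's: the paper's entire proof consists of citing \cite[\href{https://stacks.math.columbia.edu/tag/01SG}{Tag 01SG}]{stacks-project} for the first assertion and \cite[\href{https://stacks.math.columbia.edu/tag/01V8}{Tag 01V8}]{stacks-project} for the second, the latter being precisely the fibrewise criterion for smoothness you invoke (your Tag 02VL is an equivalent formulation).

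Where you go beyond the paper is in correctly diagnosing that the lemma, as stated, is not true: Tag 01V8 has \emph{flatness} as a standing hypothesis, and without it the implication ``all fibres smooth $\Rightarrow f$ smooth'' fails, as your example $V(tx)\subseteq\AA^1_{k[t]}$ shows. The paper silently omits this hypothesis, while you make it explicit and then verify it for the specific hypersurfaces $\KK$ and $\XX_m(\ul{n},\psi)$ to which the lemma is actually applied (either via the relative-complete-intersection argument or by observing they are pulled back from torsion-free, hence flat, $\ZZ$-schemes). So your proof is not merely a different route but a genuine repair: it isolates the one nontrivial step the paper's citation leaves implicit and supplies it.
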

\begin{proof}
This follows from \cite[\href{https://stacks.math.columbia.edu/tag/01SG}{Lemma 01SG}]{stacks-project} and \cite[\href{https://stacks.math.columbia.edu/tag/01V8}{Lemma 01V8}]{stacks-project}.   
\end{proof}

\printbibliography[title={Bibliography}]

\end{document}